\title[The $\debar$-equation for $(p,q)$-forms on a non-reduced analytic space]{The 
$\debar$-equation for $(p,q)$-forms on a non-reduced analytic space}
\author[M. Andersson, R. L\"{a}rk\"{a}ng, M. Lennartsson, H. Samuelsson Kalm]{Mats Andersson \& Richard L\"{a}rk\"{a}ng \& Mattias Lennartsson \\
\& H{\aa}kan Samuelsson Kalm}
\thanks{The first and second author was partially supported by the Swedish Research Council.}
\subjclass[2010]{32A26, 32A27, 32C15, 32C30, 32C37}
\address{Department of Mathematical Sciences, Division of Algebra and Geometry, University of Gothenburg and 
Chalmers University of Technology, SE-412 96 G\"{o}teborg, Sweden}
\email{matsa@chalmers.se, larkang@chalmers.se, matlen@chalmers.se, hasam@chalmers.se}
\date{\today}
\newtheorem{proposition}{Proposition}[section]
\newtheorem{theorem}[proposition]{Theorem}
\newtheorem{lemma}[proposition]{Lemma}
\newtheorem{corollary}[proposition]{Corollary}
\theoremstyle{definition}
\newtheorem{definition}[proposition]{Definition}
\newtheorem{example}[proposition]{Example}
\newtheorem{remark}[proposition]{Remark}
\numberwithin{equation}{section}
\DeclareMathOperator{\Hom}{\mathscr{H}\text{\kern -3pt {\calligra\Large om}}\,}
\DeclareMathOperator{\Ext}{\mathscr{E}\text{\kern -3pt {\calligra\Large xt}}\,\,}
\DeclareMathOperator{\Image}{\mathscr{I}\text{\kern -3pt {\calligra\Large m}}\,}
\DeclareMathOperator{\Kernel}{\mathscr{K}\text{\kern -3pt {\calligra\Large er}}\,}
\newcommand{\C}{\mathbb{C}}
\newcommand{\Cu}{\mathscr{C}}
\newcommand{\debar}{\bar{\partial}}
\newcommand{\A}{\mathscr{A}}
\newcommand{\Bsheaf}{\mathscr{B}}
\newcommand{\F}{\mathscr{F}}
\newcommand{\HH}{\mathscr{H}}
\newcommand{\J}{\mathcal{J}}
\newcommand{\E}{\mathscr{E}}
\newcommand{\W}{\mathcal{W}}
\newcommand{\PM}{\mathcal{PM}}
\newcommand{\hol}{\mathscr{O}}
\newcommand{\V}{\mathcal{V}}
\newcommand{\CH}{\mathscr{C} \kern -2pt \mathscr{H}}
\newcommand{\Om}{\mathit{\Omega}}
\newcommand{\Ba}{{\scalebox{1.4}{$\omega$}}}
\newcommand{\Xpreg}{X_{p\text{-reg}}}
\newcommand{\VV}{\mathcal{V}}
\def\newop#1{\expandafter\def\csname #1\endcsname{\mathop{\rm #1}\nolimits}}
\begin{document}
\nocite{*}
\bibliographystyle{plain}

\begin{abstract}
On any pure $n$-dimensional, possibly non-reduced,  analytic space $X$ we introduce the sheaves 
 $\E_X^{p,q}$ of smooth $(p,q)$-forms and certain extensions $\A_X^{p,q}$ of them
such that the corresponding Dolbeault complex is exact, i.e., the $\debar$-equation is locally 
solvable in $\A_X$. The sheaves  $\A_X^{p,q}$ are modules over the  
smooth forms, in particular, they are fine sheaves. 
We also introduce certain sheaves $\Bsheaf_X^{n-p,n-q}$ of currents on $X$ that are 
dual to $\A_X^{p,q}$ in the sense of Serre duality. More precisely, we show that the compactly supported Dolbeault
cohomology of $\Bsheaf^{n-p,n-q}(X)$ in a natural way is the dual of the Dolbeault cohomology of $\A^{p,q}(X)$.
\end{abstract}

\maketitle
\thispagestyle{empty}

\section{Introduction}

It is natural to try to find concrete realizations of abstract objects like
sheaf cohomology groups and their duals.
On a smooth complex manifold $X$ of dimension $n$ the Dolbeault--Grothendieck lemma states that
the Dolbeault complex, 
\begin{equation}\label{sol1}
0\to \Om^p_X \to \E_X^{p,0}\stackrel{\debar}{\to}\E_X^{p,1}\to \cdots,
\end{equation}
is a fine resolution of $\Om_X^p$, and by standard arguments it follows that we have the representation
\begin{equation}\label{sol2}
H^{p,q}(X):=H^q(X,\Om^p_X)\simeq H^q(\E^{p,\bullet}(X),\debar).
\end{equation}
If $X$ is compact, then the duals of these groups are represented by
$H^{n-p,n-q}(X)$ via the non-degenerate pairing
\begin{equation}\label{sol4}
H_{}^{p,q}(X)\times H_{}^{n-p,n-q}(X)\to \C,  \quad ([\phi],[\psi])\to \int_X \phi\wedge \psi,
\end{equation}
where $\phi$ and $\psi$ are $\debar$-closed $(p,q)$ and $(n-p,n-q)$-forms, respectively.   
There are analogues of this so-called Serre duality even when $X$ is not compact.  

\smallskip
If $X$ is a non-smooth reduced analytic space, then the complex  \eqref{sol1} has a meaning  but it is
not exact in general except at $q=0$.  Thus the direct analogue of  \eqref{sol2} does not hold.
However,  
there are fine sheaves $\A_X^{p,q}$ of $(p,q)$-currents,
introduced in  \cite{AS} for $p=0$ and in \cite{SK} for $p\ge 0$,
that coincide with $\E_X^{p,q}$ on $X_{reg}$, such that 
\begin{equation}\label{sol5}
0\to \Om^p_X \to \A_X^{p,0}\stackrel{\debar}{\to}\A_X^{p,1}\to \cdots
\end{equation}
are fine resolutions of\footnote{In this paper $\Om^p_X$ denotes the sheaf of K\"{a}hler differential $p$-forms
modulo torsion.} $\Om^p_X$.  This leads to the representation
\begin{equation}\label{sol6}
H^{p,q}(X):=H^q(X,\Om^p_X)\simeq H^q(\A^{p,\bullet}(X),\debar).
\end{equation}
In the non-smooth case however the duality is more involved.  Let $\Ba^p_X$ be the sheaves of 
meromorphic $(p,0)$-forms which are $\debar$-closed considered as currents on $X$.  
They were first introduced by Barlet in \cite{Barlet} in a slightly different way;
see also \cite{HP}.
In \cite{RSW, SK} 
were introduced fine sheaves $\Bsheaf_X^{p,q}$ of $(p,q)$-currents, that are smooth on $X_{reg}$,
with the following properties:
For each $p$ we have a complex
\begin{equation}\label{sol3}
0\to \Ba^p_X \to \Bsheaf_X^{p,0}\stackrel{\debar}{\to}\Bsheaf_X^{p,1}\to \cdots
\end{equation}
such that, given that $X$ is compact, 
$H^{n-q}(\Bsheaf^{n-p,\bullet}(X),\debar)$ is the dual of $H^{p,q}(X)$, realized via the non-degenerate pairing 
\begin{equation}\label{sol41}
H^{p,q}(X)\times H^{n-q}(\Bsheaf^{n-p,\bullet}(X),\debar)\to \C,  \quad ([\phi],[\psi])\to \int_X \phi\wedge \psi,
\end{equation}
where $\phi$ and $\psi$ are $\debar$-closed currents in $\A^{p,q}(X)$ and
$\Bsheaf^{n-p,n-q}(X)$, respectively.  
The complex \eqref{sol3} is exact at $q=0$ but it is a resolution of $\Ba_X^p$ if and only if $\Om^p_X$
is Cohen--Macaulay. 

\smallskip
The aim of this paper is to extend these results to the case when $X$ is a non-reduced analytic space
of pure dimension $n$.  Already in \cite{AL} were defined a resolution of the structure sheaf
$\hol_X$, that is, \eqref{sol5} for $p=0$,  and as a consequence a representation \eqref{sol6}
for $p=0$.  We thus have to extend this representation to $p\ge 0$ and find analogues of \eqref{sol3} and \eqref{sol41}.

\smallskip
Let us describe various forms and currents on our non-reduced $X$.  First recall that locally we have an embedding 
$i\colon X\to  D\subset\C^N$ and a surjective sheaf mapping $i^*\colon \hol_D^p\to \hol_X^p$.
This means more concretely that we have an ideal sheaf  $\J_X\subset\hol_D$ with zero set $X_{red}$
such that $i^*$ is the natural mapping  $\hol_D\to \hol_D/\J_X\simeq \hol_X$.
There are similar surjective mappings
$i^*\colon \Om_D^p\to \Om_X^p$ for $p\ge 1$. Moreover, we have the $\hol_X$-sheaves
$\E_X^{p,*}$ of smooth $(p,*)$-forms and natural surjective mappings
$i^*\colon \E_D^{p,*}\to \E_X^{p,*}$.  It turns out that $i^*$ is a ring homomorphism as usual  
so that we have natural products  
\begin{equation}\label{par}
\E_X^{p,q} \times \E_X^{p',q'}\to \E_X^{p+p',q+q'}, \quad (\phi,\psi)\mapsto \phi\wedge\psi.
\end{equation}

We define the sheaf $\Cu_X^{p,q}$ of $(p,q)$-currents on $X$ as the dual of the space of compactly supported sections of
$\E_X^{n-p,n-q}$. Given 
the embedding $i\colon X\to  D\subset\C^N$ we have natural injective mappings
$i_*\colon \Cu_X^{p,q}\to \Cu_D^{N-n+p,N-n+q}$ so that the elements in $\Cu_X^{p,q}$ are 
identified with the ordinary $(N-n+p,N-n+q)$-currents in $D$ that vanish on $\Kernel i^*$.  
In view of \eqref{par} we have natural products   
\begin{equation}\label{par2}
\E_X^{p,q} \times \Cu_X^{p',q'}\to \Cu_X^{p+p',q+q'}, \quad (\phi, u)\mapsto \phi\wedge u.
\end{equation}


We are mainly interested in subsheaves $\W_X^{p,q}$ of $\Cu_X^{p,q}$ where the elements
have a certain regularity property; \eqref{par2} holds also with $\Cu_X$ replaced by $\W_X$.
The subsheaf of $\debar$-closed members of $\W_X^{p,0}$ 
are denoted by $\Ba_X^p$;  they are natural extensions to our non-reduced
space $X$ of the  Barlet sheaves. 

We are also interested in another class of non-smooth forms $\V_X^{p,q}$, which however are fundamentally
different from $\Cu_X^{p,q}$. The sheaves $\V_X^{p,q}$ are extensions of $\E_X^{p,q}$ and contain
for instance principal values of meromorphic forms. Generically on $X$ elements in $\V_X^{p,q}$
are weak limits of elements in $\E_X^{p,q}$.

\begin{remark}\label{skrutt}
Notice that when $X$ is reduced we have the inclusion
$\Om_X^p\subset \Ba_X^p$ with equality if $X$ is smooth. In the non-reduced case
there is no such relation at all since the elements in $\Ba_X$, although holomorphic, are
dual objects whereas elements in $\Om_X^p$ have no natural interpretation as dual objects.
However, 
if $\phi$ is in $\Om_X^p$ and $\mu$ in $\Ba_X^{p'}$, then $\phi\wedge \mu$ is in $\Ba_X^{p+p'}$.
\end{remark}

Here is our first main theorem.


\smallskip

\noindent {\bf Theorem A.}
\emph{Let $X$ be a non-reduced analytic space of pure dimension $n$. For each $p\ge 0$ there
are fine\footnote{As in the reduced case, a sheaf is ``fine'' if it is closed under multiplication
by smooth functions.}  subsheaves $\A_X^{p,q}$ of $\V_X^{p,q}$ that coincide with $\E_X^{p,q}$
generically on $X$, such that \eqref{sol5} is a resolution of $\Om_X^p$.}

\smallskip

\noindent As an immediate corollary we get the representation \eqref{sol6} of sheaf cohomology.

\smallskip

For our second main theorem we must introduce an intrinsic notion of integration over $X$.
For each $(n,n)$-current $u$ on $X$ there is a well-defined integral
$$
\int_X u.
$$
Given a local embedding as before and assuming that $u$ has support in $D\cap X$
it is defined as the integral of $i_*u$ over $D$.






\smallskip

\noindent {\bf Theorem B.}
\emph{Let $X$ be a non-reduced analytic space of pure dimension $n$. Moreover assume that $X$ is compact.
There are fine subsheaves
$\Bsheaf^{p,q}$ of $\W_X^{p,q}$ such that} 

\smallskip
\noindent
\emph{(i) \eqref{sol3} is a complex,}

\smallskip
\noindent
\emph{(ii) \eqref{sol3}  is exact 
if and only if  $\Om_X^p$ is Cohen--Macaulay,}

\smallskip
\noindent
\emph{(iii)  the products $\phi\wedge \mu$ for $\phi$ in $\A_X^{p,*}$ and $\mu$ in $\Bsheaf_X^{n-p,*}$ are well-defined in $\W_X^{n,*}$,}

\smallskip
\noindent
\emph{(iv)  the pairing \eqref{sol41}  is well-defined and non-degenerate so that 
$H^{n-q}(\Bsheaf^{n-p,\bullet}(X),\debar)$ is the dual of $H^{p,q}(X)$.}

\smallskip 

There are variants of Theorem B even when $X$ is not compact, see Section~\ref{serresektion}.

\bigskip
The construction of the new sheaves on $X$ relies on the ideas in 
the previous papers \cite{AS, SK, AL}. The proofs of Theorems A and B rely on
explicit Koppelman formulas for the $\debar$-equation. The main novelty in this paper 
is the adaption of the ideas in \cite{AL} to the framework in \cite{SK}. We also believe that 
the non-reduced point of view sheds new light on Serre duality, even in the reduced case, cf.\ Remark~\ref{skrutt}.
Finally, we think that the notions and results in this paper may serve as tools for doing analysis on non-reduced spaces.

\smallskip

The paper is organized as follows. The main objects are introduced in Sections~\ref{holo-smooth-curr} and \ref{Vsektion} 
and their basic properties are proved. In the rather technical Section~\ref{intop} the integral operators used in the 
Koppelman formulas are defined and their basic mapping properties are shown.
The sheaves $\A_X^{p,*}$ and $\Bsheaf_X^{n-p,*}$ are introduced in Section~\ref{AoBsektion} and Theorem A as well as 
Koppelman formulas are proved. In Section~\ref{serresektion} we show Theorem B and in Section~\ref{Exsektion} some further
examples are given.

\section{Preliminaries}\label{prelim}
Throughout this paper, unless otherwise said, $\J$ is a coherent pure $n$-dimensional ideal sheaf in a domain $D\subset\C^N$, 
$Z$ is the zero set of $\J$,
$i\colon X\hookrightarrow D$ is the (possibly) non-reduced analytic subspace with structure sheaf $\hol_{D}/\J$,
and $\kappa=N-n$. 

Let $\iota \colon Z\to D$ be the inclusion. The sheaf of smooth $(p,q)$-forms on $Z$ is $\E_Z^{p,q}:=\E_D^{p,q}/\Kernel \iota^*$.
It is well-known that this is an intrinsic notion, i.e., it does not depend on the embedding $Z\to D$. The space of $(n-p,n-q)$-currents 
on $Z$ is defined as the dual
of $\E_Z^{p,q}$. More concretely, $(p,q)$-currents on $Z$ can  
be identified via $\iota_*$ with $(\kappa+p,\kappa+q)$-currents $\mu$ in $D$ such that $\J_Z\mu=d\J_Z\mu=\bar\J_Z\mu=d\bar\J_Z\mu=0$.
If $\pi\colon Z'\to Z$ is proper, $\mu$ a current on $Z'$, and $\psi$ is smooth on $Z$, then
\begin{equation}\label{projformel}
\pi_*(\pi^*\psi\wedge\mu)=\psi\wedge \pi_*\mu.
\end{equation}

In \cite{AW2}, see also \cite{AS}, was introduced the sheaf $\PM_Z$ of \emph{pseudomeromorphic currents}. A current $\tau$ in $U\subset\C^N$
is an elementary pseudomeromorphic current if $\tau=\varphi\wedge\tau'$, where $\varphi$ is smooth with compact support in $U$
and $\tau'$ is the tensor product of one-variable currents $1/z_k^{m_k}$ and $\debar(1/z_\ell^{m_\ell})$. 
If $Z$ is smooth, then, \cite[Theorem~2.15]{AWdirect}, 
a current on $Z$ is pseudomeromorphic if and only if it is a locally finite sum of currents of the form
$f_*\tau$, where $f\colon U\to Z$ is holomorphic, $U\subset \C^N$, and 
$\tau$ is elementary. If $Z$ has singularities the definition is slightly more involved. Pseudomeromorphic currents are
closed under $\debar$ and direct images of modifications, simple projections, and open inclusions.

\begin{example}
Recall that a current on $Z$ is semi-meromorphic if it is of the form $\varphi/s$, where $s$ is a generically non-vanishing section of
some line bundle $L$ and $\varphi$ is a smooth form with values in $L$. If $|\cdot |$ is any Hermitian metric on $L$, then
$\chi(|s|^2/\epsilon)\varphi/s\to\varphi/s$ as currents,
where $\chi$ is a smooth approximation of the characteristic function of $[1,\infty)\subset\mathbb{R}$.
Semi-meromorphic currents, and $\debar$ of such, are sections of $\PM$.  
\end{example}

We refer to \cite{AWdirect} for properties of pseudomeromorphic currents. If $V=\{h=0\}$ for some holomorphic tuple $h$ 
in $D$ and $\mu\in\PM(D)$, then
\begin{equation}\label{tabasco}
\mathbf{1}_{D\setminus V}\mu:=\lim_{\epsilon\to 0}\chi(|h|^2/\epsilon) \mu.
\end{equation}
The limit \eqref{tabasco} exists, is in $\PM_D$, and is independent of such $h$ and $\chi$. 
Set 
\begin{equation*}
\mathbf{1}_{V}\mu:=\mu-\mathbf{1}_{D\setminus V}\mu.
\end{equation*}
If $\pi\colon \widetilde D\to D$ is a modification or a simple projection and $\tau\in\PM(\widetilde D)$
has compact support in the fiber direction, then
\begin{equation}\label{SEPprojformel}
\mathbf{1}_V\pi_*\tau = \pi_*(\mathbf{1}_{\pi^{-1}V}\tau).
\end{equation}
If $\mu\in\PM_D$ has support in $Z$, then 
\begin{equation}\label{slurp}
\bar\J_Z\mu=d\bar\J_Z\wedge\mu=0.
\end{equation}

\noindent {\bf Dimension principle.} \emph{If $\mu\in \PM_Z$ has bidegree $(*,q)$ and support in a subvariety $V\subset Z$ 
such that $\text{codim}_Z\, V>q$, then $\mu=0$.}

\medskip

A current $\mu\in\PM_D$ with support in $Z$ has the \emph{standard extension property}
(SEP) with respect to $Z$ if $\mathbf{1}_V\mu=0$ for all germs of analytic sets $V$ in $D$ intersecting $Z$ properly.
The subsheaf of $\PM_D$ of $(N,*)$-currents with support in $Z$ and the SEP with respect to $Z$ is denoted $\W^{Z,*}_D$.
The subsheaf of $\PM_Z$ of pseudomeromorphic currents on $Z$ with the SEP with respect to $Z$ is denoted by $\W_Z$.

\begin{remark}\label{gnabbigare}
We will frequently consider $\Hom$-sheaves. For instance a sheaf like $\Hom_{\hol_D}(\Om^p_D,\W_D^{Z,*})$
can in a natural way be identified with sheaves of currents of bidegree
$(N-p,*)$ by
\begin{equation*}
\W_D^{Z,(N-p,*)} \xrightarrow{\sim} \Hom_{\hol_D}(\Om^p_D,\W_D^{Z,*}), \quad
\mu\mapsto (\varphi\mapsto \varphi\wedge\mu),
\end{equation*}
where we temporarily let $\W_D^{Z,(N-p,*)}$ denote the sheaf of pseudomeromorphic $(N-p,*)$-currents
in $D$ with support on $Z$ and the SEP with respect to $Z$. It is clear that if $\varphi\wedge\mu=0$ for all
$\varphi\in\Om_D^p$, then $\mu=0$. Hence, the mapping is injective. To see that it is surjective, let $\{dz_I\}$ be a basis
of $\Om_D^p$ and let $\{\partial/\partial z_I\}$ be the dual basis. If $u\in\Hom_{\hol_D}(\Om^p_D,\W_D^{Z,*})$ then
$u(dz_I)\in\W_D^{Z,*}$ and so, by \cite[Theorem~3.7]{AWdirect}, there are $u_I\in\W_D^{Z,(0,*)}$ such that
$u(dz_I)=dz\wedge u_I$, where $dz=dz_1\wedge\cdots\wedge dz_N$.
Define $\mu_I\in\W_D^{Z,(N-p,*)}$ by $\mu_I=\pm(\partial/\partial z_I\lrcorner dz)\wedge u_I$,
where $\pm$ is chosen so that $dz_I\wedge\mu_I=dz\wedge u_I$. Setting $\mu=\sum_I\mu_I$ it is straightforward to check
that $\varphi\wedge\mu=u(\varphi)$ for all $\varphi\in \Om_D^p$
since  $\{dz_I\}$ is a basis of $\Om_D^p$.

In this paper we will use the $\Hom$-notation but, keeping the identification in mind, we will for a $\Hom$-element $\mu$
write $\varphi\wedge\mu$ (or possibly $\mu\wedge\varphi$) instead of $\mu(\varphi)$.
\end{remark}
 
Suppose that $Z$ is smooth and that we have local coordinates $(z,w)$ centered at some $x\in Z$ such that 
$Z=\{w=0\}$.  
If $\mu\in\W_D^{Z,*}$, then there is a unique representation
\begin{equation}\label{grymta}
\mu=\sum_\alpha \mu_\alpha(z)\wedge \debar\frac{dw}{w^{\alpha+\mathbf{1}}},
\end{equation}
where the products are tensor products, $\mu_\alpha(z)\in\W_Z^{n,*}$, and $\debar(dw/w^{\alpha+\mathbf{1}})$ is shorthand for 
$\debar(dw_1/w_1^{\alpha_1+1})\wedge \debar(dw_2/w_2^{\alpha_2+1})\wedge\cdots$; see \cite[Proposition~2.5]{AL}.
By that same proposition,
\begin{equation}\label{klippning}
\mu_\alpha(z)=\pi_*(w^\alpha\mu)/(2\pi i)^\kappa,
\end{equation}
where $\pi(z,w)=z$ and $w^\alpha=w_1^{\alpha_1}w_2^{\alpha_2}\cdots$.
By \cite[Proposition~3.12, Theorem~3.14]{AWreg} we have

\begin{proposition}\label{AWprop}
If $u,\mu_1,\ldots,\mu_\ell\in\W_Z^{n,*}$ and $u=0$ on the set where all $\mu_j$ are smooth,
then $u=0$.
\end{proposition}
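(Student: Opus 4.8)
The plan is to deduce the statement from the standard extension property together with the regularity fact that a current in $\W_Z^{n,*}$ is smooth outside a proper analytic subvariety of $Z$. Granting this regularity for the moment, the argument is short. For each $j$ let $V_j\subset Z$ be a proper analytic subvariety containing the non-smooth locus of $\mu_j$, and put $V:=Z_{\mathrm{sing}}\cup V_1\cup\cdots\cup V_\ell$, which is again a proper analytic subvariety since $Z$ is reduced and of pure dimension $n$. On $Z\setminus V$ the space $Z$ is smooth and every $\mu_j$ is smooth, so $Z\setminus V$ is contained in the set where all $\mu_j$ are smooth; by hypothesis $u=0$ there. As $Z\setminus V$ is open this gives $\operatorname{supp} u\subset V$.

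It then remains to see that a current $u\in\W_Z^{n,*}$ with support in a proper analytic subvariety $V$ must vanish. Since $V$ meets $Z$ properly, the SEP gives $\mathbf{1}_V u=0$. On the other hand $u$ is pseudomeromorphic with $\operatorname{supp} u\subset V$, and for such currents $\mathbf{1}_{Z\setminus V}u=0$; hence $u=\mathbf{1}_V u+\mathbf{1}_{Z\setminus V}u=0$. (If $u$ has bidegree $(n,q)$ with $\operatorname{codim}_Z V>q$ one could instead invoke the dimension principle, but the SEP disposes of all $q$ simultaneously and so is the cleaner tool here.)

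The real content, and the step I expect to be the main obstacle, is the regularity input: that each $\mu_j\in\W_Z^{n,*}$ is smooth outside a proper analytic subvariety of $Z$. To prove this I would argue locally and use the structure theorem for pseudomeromorphic currents recalled above: after passing to a resolution of $Z$ if necessary, $\mu_j$ is a locally finite sum of push-forwards $f_*\tau$ of elementary currents $\tau$ under proper holomorphic maps $f$. Each such push-forward is smooth on a dense Zariski-open subset of $Z$ --- over a generic point the polar factors of $\tau$ are integrated out along the fibre of $f$ --- and its singular support is contained in an analytic subvariety of $Z$, the relevant image sets being analytic by Remmert's proper mapping theorem. Being the singular support of a generically smooth current, this subvariety is proper, which is exactly what is needed above. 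This is precisely the regularity theory of pseudomeromorphic currents, and in the write-up it is cleanest to quote \cite[Proposition~3.12, Theorem~3.14]{AWreg} for this point rather than reproduce the argument.
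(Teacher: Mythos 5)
Your second step is sound: once $\operatorname{supp} u$ is contained in a proper analytic subvariety $V$, the decomposition $u=\mathbf{1}_V u+\mathbf{1}_{Z\setminus V}u$ together with the SEP does force $u=0$. The gap is in the first step, which carries all of the content: you assume that every $\mu_j\in\W_Z^{n,*}$ is smooth outside a \emph{proper analytic} subvariety of $Z$. Note that if this were true, the proposition would be nothing more than a restatement of the SEP (take $\ell=1$ and an arbitrary $\mu_1$: one would conclude that any $u\in\W_Z^{n,*}$ vanishing off a proper analytic set vanishes, which is exactly the SEP). The authors state it as a separate proposition, formulated relative to ``the set where all $\mu_j$ are smooth'' rather than relative to the complement of an analytic set, and they invoke two results from a paper devoted specifically to regularity of pseudomeromorphic currents, precisely because the singular support of a current in $\W_Z$ is \emph{not} in general contained in a proper analytic subvariety. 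This is also visible in how the proposition is used later (in the proof of Proposition~\ref{hard}): if the singular loci of the $\pi_*(w^\alpha\varphi\wedge\mu_j)$ were analytic, the SEP would suffice there and Proposition~\ref{AWprop} would be superfluous. Your sketch of the regularity claim also does not go through as written: for a push-forward $f_*\tau$ of an elementary current, the polar set of $\tau$ is a hypersurface in $U$ whose dimension may exceed $n$, so its image under $f$ can be all of $Z$; Remmert's theorem then yields no proper subvariety containing the singular support, and whether the fibrewise integration of the polar factors produces something smooth at a given point of $Z$ is exactly the delicate question that \cite{AWreg} addresses.

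For comparison, the paper offers no argument at all here: the statement is quoted directly from \cite[Proposition~3.12, Theorem~3.14]{AWreg}. So the honest options are to cite those results for the proposition \emph{as formulated}, or to reproduce their proof; reducing it to the stronger claim that currents in $\W_Z$ have analytic singular support is not available, and the citation you append does not cover that claim.
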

 
The sheaf $\CH_D^Z$ of \emph{Coleff-Herrera currents} with support on $Z$ was introduced by Bj\"{o}rk, see \cite{jebAbel}.
An $(N,\kappa)$-current $\mu$ in $D$ is in $\CH_D^Z$ if $\debar\mu=0$, $\bar h \mu=0$ for any $h\in\J_Z$, and $\mu$ has the SEP
with respect to $Z$. Alternatively, by \cite{Aunik}, we have
\begin{equation}\label{CHdef}
\CH_D^{Z}=\{\mu\in\W_D^{Z,\kappa};\, \debar\mu=0\}.
\end{equation}
Notice that $\Hom_{\hol_D}(\Om_D^p,\CH_D^Z)$ can be identified with Coleff--Herrera currents of bidegree $(N-p,\kappa)$
in view of Remark~\ref{gnabbigare}.
Assume as before that there are local coordinates $(z,w)$ such that $Z=\{w=0\}$ and let $\pi(z,w)=z$.
If $\mu\in\Hom_{\hol_D}(\Om_D^p,\CH_D^Z)$,
then $\mu_\alpha(z)$ defined in \eqref{klippning} are $\debar$-closed $(n-p,0)$-currents on $Z$.
Hence, the coefficients $\mu_\alpha(z)$ in the unique representation \eqref{grymta} are in 
$\Om_Z^{n-p}$.

The sheaf $\Ba_Z^{n-p}$ was introduced by Barlet in \cite{Barlet} as the kernel of a certain map
$j_*j^*\Om_Z^{n-p}\to \HH^1_{Z_{sing}}(\Ext^\kappa_{\hol_D}(\hol_Z,\Om_D^{N-p}))$, where $j\colon Z_{reg}\to Z$ is the 
inclusion. It follows from \cite{Barlet} that sections of $\Ba_Z^{n-p}$ are $\debar$-closed meromorphic $(n-p)$-forms on $Z$,
cf.\ \cite[Section~4]{SK} and \cite{HP}. Recall that $\iota\colon Z\to D$ is the inclusion. By \cite[Lemma~4]{Barlet} we have
\begin{equation}\label{kopp}
\iota_*\Ba_Z^{n-p}=\{\mu\in\Hom_{\hol_D}(\Om_D^p,\CH_D^Z);\, \J_Z\mu=d\J_Z\wedge\mu=0\}.
\end{equation}

A current $a$ on $Z$ is \emph{almost semi-meromorphic} if there are a modification $\pi\colon Z'\to Z$ and a semi-meromorphic current
$\nu$ on $Z'$ such that $a=\pi_*\nu$. In particular, $a$ is generically smooth. Thus, if $\mu\in\PM_Z$, then
$a\wedge\mu$ is generically well-defined. By \cite[Theorem~4.8]{AWdirect}, there is a unique $T\in\PM_Z$ such that 
$T=a\wedge\mu$ on the set where $a$ is smooth and $\mathbf{1}_VT=0$, 
where $V$ is the Zariski closure of the singular support of $a$. Henceforth we let
$a\wedge\mu$ denote the extension $T$. One can define
$a\wedge\mu$ as 
\begin{equation}\label{ASMprod}
a\wedge\mu:=\lim_{\epsilon\to 0}\chi(|h|^2/\epsilon)a\wedge\mu,
\end{equation} 
where $h$ is a holomorphic tuple cutting out $V$. 
If $\mu\in\W_Z$, then 
$a\wedge\mu\in\W_Z$.

Let $E_j\to D$, $j=0,\ldots,N$, be complex vector bundles. Let $f_j\colon E_j\to E_{j-1}$ be holomorphic morphisms and
suppose that we have a complex
\begin{equation*}
0\to E_N \stackrel{f_N}{\longrightarrow} \cdots \stackrel{f_1}{\longrightarrow} E_0\to 0,
\end{equation*}
which is exact outside $Z\subset D$. Assume that the associated sheaf complex
\begin{equation}\label{karvkplx}
0\to \hol(E_N) \stackrel{f_N}{\longrightarrow} \cdots \stackrel{f_1}{\longrightarrow} \hol(E_0)
\end{equation}
is exact and let $\F:=\hol(E_0)/\Image \, f_1$ so that \eqref{karvkplx} is a resolution of $\F$.
Recall that $\F$ is Cohen--Macaulay if and only if there is a resolution \eqref{karvkplx} with $N=\kappa$.
Let $Z^\F_j\subset D$ be the set where $f_j$ does not have optimal rank. These \emph{singularity subvarieties} are independent of the resolution,
thus invariants of $\F$, and reflect the complexity of $\F$. It follows from the Buchsbaum-Eisenbud theorem that 
\begin{equation}\label{pinne2}
Z_N^\F\subset \cdots \subset Z_\kappa^\F=Z_{\kappa-1}^\F =\cdots =Z_1^\F=Z
\end{equation} 
and that $\text{codim}_D Z_j^\F\geq j$, $j=\kappa,\kappa+1,\ldots$. Moreover, \cite[Corollary~20.14]{Eisenbud},
$\F$ has pure codimension $\kappa$ (i.e.,
no stalk of $\F$ has embedded primes or associated primes of codimension $>\kappa$) 
if and only if $\text{codim}_D Z_j^\F \geq j+1$ for $j\geq\kappa+1$.

Assume that the $E_j$ are equipped with Hermitian metrics. In this case we say that \eqref{karvkplx} is a
\emph{Hermitian resolution}. Let $\sigma_j\colon E_{j-1}\to E_j$ be the 
Moore-Penrose inverse of $f_j$, i.e., the pointwise
minimal inverse of $f_j$. The $\sigma_j$ are smooth outside $Z$ and almost semi-meromorphic in $D$. Following \cite{AW1},
we define currents $U\in\W_D$ and $R\in\PM_D$ with support in $Z$ and values in $\text{End}\, E$, where $E=\oplus_jE_j$.
Set $\sigma=\sigma_1+\sigma_2+\cdots$ and set $u=\sigma + \sigma\debar\sigma + \sigma(\debar\sigma)^2+\cdots$ outside $Z$. 
Then $fu+u f - \debar f =I_E$, where $f=\oplus_jf_j$. It turns out that $u$ has an almost semi-meromorphic extension to $D$.
In view of \eqref{tabasco} $U$ is given as
\begin{equation*}
U=\lim_{\epsilon\to 0}\chi(|F|^2/\epsilon)u,
\end{equation*}
where $F$ is a (non-trivial) holomorphic tuple vanishing on $Z$. 
Since $fu+u f - \debar f =I_E$,
\begin{equation}\label{tangent}
R:=I_E - (fU+Uf - \debar f) = \lim_{\epsilon\to 0}\debar \chi(|F|^2/\epsilon)\wedge u
\end{equation}
has support in $Z$. It is proved in \cite{AW1} that
\begin{equation*}
R=R_\kappa + R_{\kappa+1} +\cdots,
\end{equation*}
where $R_j\in\PM_D^{0,j}$ has support in $Z$, takes values in $\text{Hom}(E_0,E_j)$, and
\begin{equation}\label{nisse}
fR=\debar R.
\end{equation}
Moreover, if $\varphi\in\hol(E_0)$ then $R\varphi=0$ if and only if $\varphi\in\Image\, f_1$. 

We are interested in the case when $\F$ has pure codimension $\kappa$. It follows from \cite{AW2} that, in this case, 
$R$ has the SEP with respect to $Z$ and that 
\begin{equation}\label{sharp}
R_\kappa \varphi = 0 \iff \varphi\in\Image f_1.
\end{equation}
Let $d\zeta=d\zeta_1\wedge\cdots\wedge d\zeta_N$ for some choice of coordinates $\zeta$ in $D$.
Notice that since $R$ has the SEP, $R\otimes d\zeta$ is a current in $\W_D^{Z,*}$ with values in 
$\text{Hom}(E_0,E)$, i.e.,
\begin{equation}\label{rappt}
R\otimes d\zeta\in\Hom_{\hol_D}(\hol(E_0),E\otimes \W_D^{Z,*}).
\end{equation}

\section{Holomorphic forms, smooth forms, and currents on $X$}\label{holo-smooth-curr}
\subsection{Holomorphic forms on $X$ and associated residue currents}\label{holform}
Recall that the structure sheaf of holomorphic functions on $X$ is defined as $\hol_X=\hol_D/\J$. 
In a similar way one defines the sheaves of K\"{a}hler differentials on $X$.
Let 
\begin{equation*}
\hat\J^p:=\J\cdot\Om^p_D + d\J\wedge \Om^{p-1}_D, \quad 
\Om_{X,\text{K\"{a}hler}}^p:=\Om^p_D/\hat\J^p.
\end{equation*}
Notice that $\hat\J^0=\J$ so that $\Om_{X,\text{K\"{a}hler}}^0=\hol_X$. Notice also that $\Om_{X,\text{K\"{a}hler}}^p$ is an 
$\hol_X$-module.
It is well-known that 
$\Om_{X,\text{K\"{a}hler}}^p$ is intrinsic, i.e., that it does not depend on the embedding of $X$ in $D$. 
Since $\hat\J^0=\J$ has pure dimension it follows that $\Om_{X,\text{K\"{a}hler}}^0=\hol_X$ is torsion-free. 
In general, $\Om_{X,\text{K\"{a}hler}}^p$ has torsion.


The sheaf of \emph{strongly holomorphic} $p$-forms on $X$ is
\begin{equation*}
\Om_X^p:=\Om_{X,\text{K\"{a}hler}}^p/\text{torsion},
\end{equation*}
where torsion means $\hol_X$-torsion.
Notice that $\Om_X^p$ is intrinsic and that it
is the same considered as an $\hol_X$-module or an $\hol_{D}$-module.

\begin{example}
Let $X$ be the subspace of $\C^{2}$ defined by $\mathcal{J}=\langle zw\rangle$. Then $\hol_{X}=\hol_{z}+\hol_{w}$,
where $\hol_z$ and $\hol_w$ are the holomorphic functions only depending on $z$ and $w$, respectively, and 
$d\mathcal{J}=\langle zdw+wdz\rangle$. For the $1$-forms we have 
$\Om^{1}_{X,\text{K\"{a}hler}}=\hol_{X}\{dz,dw\}\big/\langle zdw+wdz\rangle$. If $w\neq 0$, then $\mathcal{J}=\langle z\rangle$ and 
therefore $zdw=wdz=0$. By symmetry this also holds when $z\neq 0$. However, one easily checks that $zdw$ and $wdz$ are not zero as 
K\"{a}hler differentials and therefore they are torsion elements. If we mod these out, the result is a torsion-free module which 
therefore is the strongly holomorphic $1$-forms, i.e.,  $\Om^{1}_{X}=\hol_{z}\{dz\}+\hol_{w}\{dw\}$.
\end{example}

An alternative definition of $\Om_X^p$ is as follows. From a primary decomposition of $\hat\J^p$ one obtains coherent sheaves
$\J^p$ and $\mathscr{S}^p$ such that $\hat\J^p=\J^p\cap\mathscr{S}^p$, $\J^p$ has pure dimension $n$,
and $\mathscr{S}^p$ has dimension $<n$. Hence, $\Om^p_D/\J^p$ has pure dimension and coincides with $\Om_{X,\text{K\"{a}hler}}^p$ 
generically on $Z$. It follows that 
\begin{equation*}
\Om_X^p=\Om^p_D/\J^p.
\end{equation*}
If $X$ is reduced and $j\colon X_{reg}\hookrightarrow D$ is the inclusion, then $\J^p=\{\varphi\in\Om_D^p;\, j^*\varphi=0\}$,
see, e.g., \cite{SK}.

Suppose that $0$ is a smooth point of $Z$ and choose local coordinates $(z,w)$ for $\C^N$ such that $Z=\{w=0\}$. 
Then we can identify $\hol_Z$ with the holomorphic functions of $z$. If $g(z)$ is holomorphic we let 
$\tilde g$ be the extension to ambient space given by $\tilde g(z,w)=g(z)$. In a neighborhood of $0$
we can then define an $\hol_Z$-module structure on $\Om_X^p$ by setting $g\varphi:=\tilde g \varphi$.
Clearly this module structure depends on the choice of local coordinates.   

\begin{proposition}\label{ulv}
Assume that we have coordinates $(z,w)$ so that $Z=\{w=0\}$. Then, with the associated 
$\hol_Z$-module structure, $\Om_X^p$
is coherent. Moreover,
if $\hol_X$ is Cohen--Macaulay, then the following are equivalent
\begin{itemize}
\item[(i)] $\Om_X^p$ is Cohen--Macaulay as an $\hol_X$-module,
\item[(ii)] $\Om_X^p$ is a locally free $\hol_X$-module,
\item[(iii)] $\Om_X^p$ is Cohen--Macaulay as an $\hol_Z$-module, 
\item[(iv)] $\Om_X^p$ is a locally free $\hol_Z$-module.
\end{itemize}
\end{proposition}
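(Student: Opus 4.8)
The plan is to use the finite projection $\pi\colon X\to Z$, $\pi(z,w)=z$, which induces the $\hol_Z$-module structure (since $g\cdot\varphi=\tilde g\,\varphi=\pi^*g\,\varphi$), and then to compare homological invariants of $\Om_X^p$ over the regular ring $\hol_Z$ with those over $\hol_X$.

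First, coherence. As $Z=\{w=0\}$ is the zero set of $\J$, we have $(w)^m\subset\J$ locally for some $m$. Since $\J^p\supset\hat\J^p\supset\J\,\Om_D^p\supset (w)^m\Om_D^p$, the module $\Om_X^p=\Om_D^p/\J^p$ is a quotient of $\Om_D^p/(w)^m\Om_D^p$, and the latter is a \emph{finite free} $\hol_Z$-module with basis $\{w^\beta\,d\zeta_I:\ |\beta|<m\}$. Hence $\Om_X^p$ is finitely generated over $\hol_Z$; being $\hol_D$-coherent and supported on $Z$, it is then $\hol_Z$-coherent. (Equivalently, $\pi$ is finite, its fibres being the zero-dimensional germs $X\cap\pi^{-1}(z)$, and one invokes the finite mapping theorem.)

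For the equivalences set $R=\hol_Z$, a regular local ring of dimension $n$, and $S=\hol_X$, a finite $R$-algebra; recall that $\Om_X^p$ has pure dimension $n$, so $\dim_R\Om_X^p=\dim_S\Om_X^p=n$. The crucial technical point is that \emph{depth is independent of which structure we use}: because $\mathfrak m_R S$ is $\mathfrak m_S$-primary (as $S/\mathfrak m_R S$ is Artinian), a sequence in $\mathfrak m_R$ is $\Om_X^p$-regular over $R$ if and only if it is over $S$, so $\operatorname{depth}_R\Om_X^p=\operatorname{depth}_S\Om_X^p$. Granting this, (i)$\Leftrightarrow$(iii) is immediate, both meaning that this common depth equals $n$. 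Applying the same comparison to $S$ itself shows that the hypothesis ``$\hol_X$ Cohen--Macaulay'' forces $\hol_X$ to be $\hol_Z$-free. Next, (iii)$\Leftrightarrow$(iv) follows from the Auslander--Buchsbaum formula over the regular ring $\hol_Z$: here $\operatorname{pd}_{\hol_Z}\Om_X^p+\operatorname{depth}_{\hol_Z}\Om_X^p=n$, so $\Om_X^p$ is Cohen--Macaulay over $\hol_Z$ iff $\operatorname{pd}_{\hol_Z}\Om_X^p=0$ iff $\Om_X^p$ is $\hol_Z$-free. Finally, (ii)$\Rightarrow$(i) is formal, a free module over the Cohen--Macaulay ring $\hol_X$ being Cohen--Macaulay.

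I expect the implication (i)$\Rightarrow$(ii) to be the main obstacle. After the preceding steps it amounts to showing that a module which is maximal Cohen--Macaulay over $\hol_X$ and free over $\hol_Z$ is in fact $\hol_X$-free; over the singular, non-reduced ring $\hol_X$ maximal Cohen--Macaulayness alone does \emph{not} give freeness (the projective dimension over $\hol_X$ is typically infinite), so this is where the Cohen--Macaulay hypothesis on $\hol_X$ together with the specific structure of $\Om_X^p$ — in particular its generic behaviour along $Z$ — must be exploited, presumably through a local-freeness criterion adapted to $\hol_X$. I would concentrate the real work here, having arranged the other three implications so that this is the only remaining point.
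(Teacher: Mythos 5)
Most of what you write is sound and close to the paper's own argument. The coherence proof via $(w)^M\subset\J$ and the finite free $\hol_Z$-module $\Om_D^p/\mathcal{I}^p$ is exactly the paper's route (the paper additionally verifies by hand that the kernel of the surjection $A^p\to\Om_X^p$ is a coherent $\hol_Z$-module, which you replace by an appeal to the finite mapping theorem). Your two assertions $\dim_{\hol_X}\Om_X^p=\dim_{\hol_Z}\Om_X^p=n$ and $\operatorname{depth}_{\hol_X}\Om_X^p=\operatorname{depth}_{\hol_Z}\Om_X^p$ are precisely the paper's Claims 1 and 2. For the depth equality the paper argues via an injective resolution of $\Om_X^p$ over $\hol_X$, the freeness of $\hol_X$ over $\hol_Z$ (miracle flatness), and the characterization of depth through $\operatorname{Ext}^i(\C,-)$; your argument with regular sequences in $\mathfrak{m}_{\hol_Z}$ and the $\mathfrak{m}_{\hol_X}$-primarity of $\mathfrak{m}_{\hol_Z}\hol_X$ is a legitimate and more elementary alternative. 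The implications (i)$\Leftrightarrow$(iii), (iii)$\Leftrightarrow$(iv) and (ii)$\Rightarrow$(i) are then correct as you give them.

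The gap is (i)$\Rightarrow$(ii), which you explicitly leave open, so as it stands the proposal does not prove the proposition. The paper does not treat this implication as a separate difficulty requiring any special structure of $\Om_X^p$: it states the Auslander--Buchsbaum criterion once, in the form ``if $R$ is a Cohen--Macaulay local ring and $M$ has a finite free resolution over $R$, then $M$ is free if and only if $\operatorname{depth}_R M=\dim R$'', and applies it both with $R=\hol_Z$ (giving (iii)$\Leftrightarrow$(iv)) and with $R=\hol_X$ (giving (i)$\Leftrightarrow$(ii), since $\dim\hol_X=n=\dim_{\hol_X}\Om_X^p$ by Claim 2). Your concern that $\operatorname{pd}_{\hol_X}\Om_X^p$ is typically infinite over a singular non-reduced ring is exactly the finite-free-resolution hypothesis in this criterion; the paper invokes the formula for $R=\hol_X$ without further comment, so to follow its route you would need to address that hypothesis rather than look for a ``local-freeness criterion adapted to $\hol_X$ exploiting the generic behaviour of $\Om_X^p$'' --- no such ingredient appears in the paper, whose intended mechanism for (i)$\Rightarrow$(ii) is the same one-line Auslander--Buchsbaum argument as for (iii)$\Rightarrow$(iv).
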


\begin{proof}
By the Nullstellensatz there is an $M\in\mathbb{N}$ such that 
$\mathcal{I}:=\langle w^\alpha;\, |\alpha|=M\rangle\subset\J$. Let
$\mathcal{I}^p:=\mathcal{I}\Om^p_D + d\mathcal{I}\wedge \Om^{p-1}_D$ and let
\begin{equation*}
A^p:=\Om^p_D/\mathcal{I}^p.
\end{equation*}
Clearly $A^p$ is coherent  both as an $\hol_{D}$-module and an $\hol_{D}/\mathcal{I}$-module,
and these structures are the same. Moreover, the choice of coordinates makes $A^p$ an $\hol_Z$-module and one checks that
it in fact is a free $\hol_Z$-module. In particular, $A^p$ is a coherent $\hol_Z$-module.
Since $\mathcal{I}\subset\J$ it follows that $\mathcal{I}^p\subset \hat\J^p\subset\J^p$ and so
we have a natural surjective map of $\hol_Z$-modules
\begin{equation*}
A^p\to\Om_X^p,\quad
\varphi + \mathcal{I}^p \mapsto \varphi + \J^p.
\end{equation*}
The kernel $\mathscr{K}$ of this map is $\J^p/\mathcal{I}^p$. Since $\J^p$ is a coherent $\hol_{D}$-module
there are finitely many $\varphi_j\in\J^p$ generating $\J^p$ over $\hol_{D}$. By Taylor expanding any
$g(z,w)\in\hol_{D}$ in the $w$-variables up to order $M$ we see that $\mathscr{K}$ is generated as an $\hol_Z$-module
by $w^\alpha\varphi_j+\mathcal{I}^p$ with $|\alpha|<M$. Since $\mathscr{K}\subset A^p$ and $A^p$ is a coherent $\hol_Z$-module it 
follows that $\mathscr{K}$ is coherent. Hence, $\Om_X^p\simeq A^p/\mathscr{K}$ is a coherent $\hol_Z$-module.

\emph{Claim 1:} $\text{depth}_{\hol_X} \Om_X^p = \text{depth}_{\hol_Z} \Om_X^p$.

\emph{Claim 2:} $n=\text{dim}_{\hol_X} \Om_X^p = \text{dim}_{\hol_Z} \Om_X^p$. 

Recall that, for 
an $R$-module $M$, $\text{dim}_R M:=\text{dim}_R (R/\text{ann}_R M)$ and that $M$ is Cohen--Macaulay if
$\text{depth}_R M=\text{dim}_R M$.

We take these claims for granted for the moment and show that (i), (ii), (iii), and (iv) are equivalent if $\hol_X$ is Cohen--Macaulay.
Notice that it is a local (stalk-wise) statement; in what follows we suppress the point indicating stalk.
If $R$ is a Cohen--Macaulay ring and $M$ is an $R$-module that has a finite free resolution over $R$, then
the Auslander--Buchsbaum formula gives
\begin{equation*}
\text{depth}_R M + \text{pd}_R M = \text{dim}_R R,
\end{equation*}
where $\text{pd}_R M$ is the length of a minimal free resolution of $M$ over $R$, see \cite[Theorem~19.9]{Eisenbud}. 
Thus, $M$ is free over $R$ if and only if $\text{depth}_R M=\text{dim}_R R$.

We now have that $\Om_X^p$ is free over $\hol_X$ if and only if 
$\text{depth}_{\hol_X} \Om_X^p=\text{dim}_{\hol_X} \hol_X$. But 
$\text{dim}_{\hol_X} \hol_X=n=\text{dim}_{\hol_X} \Om_X^p$, where we use the first equality of Claim 1,
so (i) and (ii) are equivalent. In the same way, since $\hol_Z$ is Cohen--Macaulay and $n$-dimensional, (iii) and (iv) are equivalent.
Assume (i) so that $\text{depth}_{\hol_X}\Om_X^p=\text{dim}_{\hol_X}\Om_X^p$. Then by Claims 1 and 2 we get
\begin{equation*}
\text{depth}_{\hol_Z}\Om_X^p = \text{depth}_{\hol_X}\Om_X^p =
\text{dim}_{\hol_X}\Om_X^p = \text{dim}_{\hol_Z}\Om_X^p,
\end{equation*}
and so (iii) follows. In the same way, (iii) implies (i). It remains to prove Claims 1 and 2.

\noindent \emph{Proof of Claim 1:} For notational convenience, let $R=\hol_X$, $R'=\hol_Z$, and $M=\Om_X^p$; notice that $R$ is
a Noetherian local Cohen--Macaulay ring and that $R'$ is a regular Notherian local ring. Since any function in $\J$ vanishes on $Z$
we have an inclusion
$R'\hookrightarrow R$ given by $g(z)\mapsto \tilde g(z,w)+\J$, where $\tilde g(z,w)=g(z)$; cf.\ the $\hol_Z$-module
structure on $ \Om_X^p$. By ``Miracle flatness'', see, e.g., \cite[Corollary~18.17]{Eisenbud} or \cite[Proposition~3.1]{AL}, 
$R$ is a free $R'$ module if and only
if $R$ is Cohen--Macaulay. Thus, $R$ is a free $R'$ module. By \cite[Proposition~18.4]{Eisenbud} and the comment after 
\cite[Corollary~18.5]{Eisenbud},
for a local ring $(A,\mathfrak{m})$ and an $A$-module $N$ we have
\begin{equation*}
\text{depth}_A N=\min \{i;\, \text{Ext}_A^i(A/\mathfrak{m}, N)\neq 0\}.
\end{equation*}
Notice that $R/\mathfrak{m}=\C=R'/\mathfrak{m}$. Claim 1 thus follows if we show that $\text{Ext}_R^i(\C, M)=\text{Ext}_{R'}^i(\C, M)$.
To do this, let 
\begin{equation}\label{hund}
0\to M\to N^0\to N^1\to\cdots
\end{equation}
be a resolution of $M$ as an $R$-module by injective $R$-modules $N^\bullet$. Then
$\text{Ext}_R^i(\C, M)=H^i(\text{Hom}_R(\C,N^\bullet))$.

The complex \eqref{hund} is straightforwardly checked to be exact also considered as a complex of $R'$-modules.
Moreover, by \cite[p.\ 62]{Lam}, since $R$ is a free $R'$-module, the $N^\bullet$ are injective $R'$-modules.
Hence, $\text{Ext}_{R'}^i(\C, M)=H^i(\text{Hom}_{R'}(\C,N^\bullet))$. However,
\begin{equation*}
\text{Hom}_R(\C,N^\bullet) = \text{Hom}_\C(\C,N^\bullet) = \text{Hom}_{R'}(\C,N^\bullet)
\end{equation*}
and so $\text{Ext}_R^i(\C, M)=\text{Ext}_{R'}^i(\C, M)$.

\smallskip

\noindent \emph{Proof of Claim 2:} We know from above that 
\begin{equation*}
\text{dim}_{\hol_X} \Om_X^p= \text{dim}_{\hol_{\C^N}} \Om_X^p=
\text{dim}_{\hol_{\C^N}} \Om_{X,\text{K\"{a}hler}}^p=n.
\end{equation*}
On the other hand, $\text{ann}_{\hol_Z}\Om_X^p=\{0\}$ because if $g(z)\Om^p_{D}\in\J^p$
then $g(z)|_Z=0$. Hence,
\begin{equation*}
\text{dim}_{\hol_Z} \Om_X^p = \text{dim}_{\hol_Z} (\hol_Z/\{0\})=n. 
\end{equation*}
\end{proof}

\begin{corollary}
Assume that $(z,w)$ are coordinates such that $Z=\{w=0\}$, that
$\hol_X$ is Cohen--Macaulay, and that $\Om_X^p$ is Cohen--Macaulay either as an $\hol_X$-module
or as an $\hol_Z$-module. Then, locally there is an $M\in\mathbb{N}$ such that $\Om_X^p$ is generated by
\begin{equation}\label{snub}
\left\{w^\alpha dz^\beta\wedge dw^\gamma + \J^p;\, |\alpha|<M, |\beta|+|\gamma|=p\right\}
\end{equation}
over $\hol_Z$ and a minimal set of generators is an $\hol_Z$-basis.
\end{corollary}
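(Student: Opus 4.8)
The plan is to combine the coherence statement and the equivalences from Proposition~\ref{ulv} with the explicit generating procedure already used in its proof. Since we assume that either (i) or (iii) holds and $\hol_X$ is Cohen--Macaulay, Proposition~\ref{ulv} gives that $\Om_X^p$ is in fact \emph{locally free} as an $\hol_Z$-module. So the content of the corollary is twofold: first, that the explicit set \eqref{snub} generates $\Om_X^p$ over $\hol_Z$, and second, that any minimal such generating set is automatically an $\hol_Z$-basis. The second part is a soft consequence of local freeness over the regular local ring $\hol_Z$: a minimal generating set of a finitely generated free (hence projective) module over a Noetherian local ring is always a basis, by Nakayama applied to the relation module.

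For the generating statement I would simply re-read the proof of Proposition~\ref{ulv}. There we chose, via the Nullstellensatz, an $M$ with $\mathcal I=\langle w^\alpha;\,|\alpha|=M\rangle\subset\J$, formed $A^p=\Om_D^p/\mathcal I^p$, and observed that $A^p$ is a \emph{free} $\hol_Z$-module, with the surjection $A^p\to\Om_X^p$. The key point is identifying an explicit $\hol_Z$-basis of $A^p$. By Taylor expanding in the $w$-variables modulo $\mathcal I$ (so keeping monomials $w^\alpha$ with $|\alpha|<M$) and using the standard basis $\{dz^\beta\wedge dw^\gamma;\,|\beta|+|\gamma|=p\}$ of $\Om_D^p$ over $\hol_D$, one sees that the classes of $w^\alpha dz^\beta\wedge dw^\gamma$ with $|\alpha|<M$ and $|\beta|+|\gamma|=p$ form a free $\hol_Z$-basis of $A^p$. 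Their images under $A^p\to\Om_X^p$ are exactly the elements in \eqref{snub}, and since the map is surjective these images generate $\Om_X^p$ over $\hol_Z$, which is the first assertion. One must be slightly careful here that the $\hol_Z$-module structure on $A^p$ is the one induced by the coordinate choice ($g(z)\mapsto\tilde g(z,w)$), matching the structure on $\Om_X^p$; but this is precisely the setup of the proposition.

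For the final clause, I would argue as follows. Write $M=\Om_X^p$, which is free of some finite rank $r$ over the regular Noetherian local ring $R'=\hol_Z$ with maximal ideal $\mathfrak m$. A set of elements is a minimal generating set exactly when its images form a $\C$-basis of $M/\mathfrak m M$, by Nakayama; hence any minimal generating set has exactly $r=\dim_\C M/\mathfrak m M$ elements. Given such a minimal set $e_1,\ldots,e_r$, the surjection $R'^{\,r}\to M$ sending the standard basis to the $e_j$ is a surjection of free modules of the same finite rank over a Noetherian ring, hence an isomorphism, so the $e_j$ are an $\hol_Z$-basis. This completes the proof.

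The only genuine obstacle I anticipate is the bookkeeping in the generating step: one must verify that reduction modulo $\mathcal I^p$ (not just modulo $\mathcal I$) together with the Leibniz term $d\mathcal I\wedge\Om_D^{p-1}$ still leaves the monomials $w^\alpha dz^\beta\wedge dw^\gamma$ with $|\alpha|<M$ as a free $\hol_Z$-basis of $A^p$, rather than merely a generating set. This is a direct but slightly tedious check that the relations coming from $\mathcal I^p$ are exactly those forcing $|\alpha|\ge M$, so that no nontrivial $\hol_Z$-linear relations survive among the listed monomials; everything else in the corollary is formal.
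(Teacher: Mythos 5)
Your proposal follows essentially the same route as the paper: generation of $\Om_X^p$ over $\hol_Z$ is read off from the surjection $A^p\to\Om_X^p$ after Taylor expanding in $w$ up to order $M$, and the final clause is the standard Nakayama argument (the paper cites the proof of Theorem~2.5 in Matsumura for exactly the argument you spell out: a minimal generating set has $r=\dim_\C M/\mathfrak m M$ elements, and a surjection $R'^{\,r}\to M\simeq R'^{\,r}$ is an isomorphism). Both halves of your argument are correct as they stand.

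One remark on the ``obstacle'' you flag at the end: the claim that the classes of $w^\alpha dz^\beta\wedge dw^\gamma$ with $|\alpha|<M$ form a \emph{free} $\hol_Z$-basis of $A^p$ is in general false, because $\mathcal I^p$ contains $d\mathcal I\wedge\Om_D^{p-1}$, and $d(w^\alpha)=\sum_j\alpha_j w^{\alpha-e_j}dw_j$ for $|\alpha|=M$ imposes nontrivial $\hol_Z$-linear relations among the listed monomials with $|\alpha|=M-1$ (e.g.\ for $\mathcal I=\langle w\rangle$ and $p=1$ one has $dw\equiv 0$ in $A^1$). So the ``direct but slightly tedious check'' you anticipate would actually fail. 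Fortunately it is also unnecessary: your deduction of the first assertion uses only that these monomials \emph{generate} $A^p$ (which Taylor expansion does give) together with surjectivity of $A^p\to\Om_X^p$, and your deduction of the second assertion uses only the local freeness of $\Om_X^p$ over $\hol_Z$ supplied by Proposition~\ref{ulv}. This is precisely what the paper's proof uses, so your argument is complete once that spurious freeness claim is dropped.
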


See Example~\ref{matlen} below for a simple illustration of this corollary.

\begin{proof}
Recall the module $A^p$ from the proof of Proposition~\ref{ulv} and let $\varphi(z,w)\in\Om^p_{D}$.
Taylor expanding the coefficients of $\varphi$ with respect to $w$ up to order $M$ shows that $A^p$ is generated as an 
$\hol_Z$-module by \eqref{snub} with $\J^p$ replaced by $\mathcal{I}^p$. Thus, $\Om_X^p$ is generated by
\eqref{snub} over $\hol_Z$. By a standard argument using Nakayama's lemma, a minimal generating set is a basis, cf.\ e.g.,
the proof of \cite[Theorem~2.5]{Mats}.
\end{proof}

\begin{definition}
We let $X_{p\text{-reg}}$ be the subset of $Z_{\text{reg}}$ where $\hol_X$ is Cohen--Macaulay
and $\Om_{X,\text{K\"{a}hler}}^p$ is Cohen--Macaulay.
\end{definition}

\begin{remark}
The property of being Cohen--Macaulay is generic on $Z$ so $X_{p\text{-reg}}$ is a dense open subset of $Z_{reg}$.
Notice also that $\Om_{X,\text{K\"{a}hler}}^p$ is torsion-free where it is Cohen--Macaulay. Hence,
\begin{equation*}
\Om_{X,\text{K\"{a}hler}}^p=\Om_X^p\quad \text{on} \quad X_{p\text{-reg}}.
\end{equation*}
In view of Proposition~\ref{ulv}, thus $\Om_X^p$ and $\Om_{X,\text{K\"{a}hler}}^p$ are locally free $\hol_X$-modules and have locally a structure 
as a free $\hol_Z$-module on $\Xpreg$.
\end{remark}

Assume that \eqref{karvkplx} is a Hermitian resolution of $\Om^p_X$ and that $E_0=T^*_{p,0}D$. If $D$ is pseudoconvex,
such resolutions exist since $\Om^p_X$ is coherent, possibly after replacing $D$ by a slightly smaller set.
Notice that $\hol(E_0)=\Om^p_D$ and that $\Image\, f_1=\J^p$.
For some choice of Hermitian metrics on $E_j$, let $R=R_\kappa + R_{\kappa+1}+\cdots$ be the associated current. 
Since $\Om^p_X$ has pure codimension, by \eqref{rappt} we have
\begin{equation}\label{nyttnr}
\mathcal{R}:=R\otimes d\zeta \in \Hom_{\hol_D}(\Om_D^p,E\otimes\W_D^{Z,*}),
\end{equation}
where $d\zeta=d\zeta_1\wedge\cdots\wedge d\zeta_N$.
Notice that in view of Remark~\ref{gnabbigare}, $\mathcal{R}$ can be identified with an $(N-p,*)$-current
with values in $E$, cf.\ \cite[Section~3]{SK}.
We set $\mathcal{R}_\ell=R_\ell\otimes d\zeta$.


In view of \eqref{nisse} and \eqref{sharp} we have the following lemma.

\begin{lemma}\label{nissen}
The current $\mathcal{R}=\mathcal{R}_\kappa+\mathcal{R}_{\kappa+1}+\cdots$ has bidegree $(N-p,*)$, takes values in $E$,
has the SEP with respect to $Z$, depends only on $d\zeta$ (and $R$), and
\begin{equation*}
f\mathcal{R}=\debar\mathcal{R}.
\end{equation*}
If $\varphi\in\Om^p_D$ then $\mathcal{R}\wedge\varphi = R\wedge\varphi\wedge d\zeta$ and, moreover, $\varphi\in\J^p$
if and only if $\mathcal{R}_\kappa\wedge\varphi=0$.
\end{lemma}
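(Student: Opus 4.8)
The plan is to verify each claim in Lemma~\ref{nissen} by tracing through the definitions and invoking the properties collected in Section~\ref{prelim}. The statement packages together several facts about $\mathcal{R}=R\otimes d\zeta$, and most of them are essentially immediate once we recall how $R$ and the identification in Remark~\ref{gnabbigare} interact.

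First I would address the structural assertions. The bidegree and the fact that $\mathcal{R}$ takes values in $E$ follow directly from the construction: $R=R_\kappa+R_{\kappa+1}+\cdots$ with $R_\ell\in\PM_D^{0,\ell}$ taking values in $\mathrm{Hom}(E_0,E_\ell)$, and tensoring with $d\zeta=d\zeta_1\wedge\cdots\wedge d\zeta_N$ raises the holomorphic degree to $N-p$ after the identification of Remark~\ref{gnabbigare} (recall $E_0=T^*_{p,0}D$, so $\hol(E_0)=\Om^p_D$). The SEP with respect to $Z$ is where purity enters: since $\Om^p_X$ has pure codimension $\kappa$, the discussion preceding \eqref{sharp} gives that $R$ has the SEP, and tensoring with the fixed holomorphic form $d\zeta$ preserves this. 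The dependence only on $d\zeta$ and $R$ is clear from the formula $\mathcal{R}=R\otimes d\zeta$ itself.

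Next I would prove the relation $f\mathcal{R}=\debar\mathcal{R}$. This is just \eqref{nisse}, namely $fR=\debar R$, tensored with the \emph{holomorphic} form $d\zeta$: since $d\zeta$ is holomorphic, $\debar(R\otimes d\zeta)=(\debar R)\otimes d\zeta$, and $f$ acts only on the $E$-valued factor, so $f(R\otimes d\zeta)=(fR)\otimes d\zeta=(\debar R)\otimes d\zeta=\debar(R\otimes d\zeta)$. For the action on forms, the identity $\mathcal{R}\wedge\varphi=R\wedge\varphi\wedge d\zeta$ for $\varphi\in\Om^p_D$ is exactly the translation dictated by Remark~\ref{gnabbigare}: the $\Hom$-element $\mathcal{R}$ sends $\varphi$ to $R\wedge\varphi\wedge d\zeta$ (up to the sign conventions fixed there), which is how we agreed to read the notation $\mathcal{R}\wedge\varphi$.

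The one genuinely substantive point is the last equivalence: $\varphi\in\J^p$ if and only if $\mathcal{R}_\kappa\wedge\varphi=0$. Since $\mathcal{R}_\kappa\wedge\varphi=R_\kappa\wedge\varphi\wedge d\zeta$ and $d\zeta$ is a nonvanishing holomorphic top-degree form, $\mathcal{R}_\kappa\wedge\varphi=0$ if and only if $R_\kappa\varphi=0$. Now $\Image f_1=\J^p$ (as noted after \eqref{karvkplx} once $E_0=T^*_{p,0}D$), so this is precisely the content of \eqref{sharp}, which holds because $\Om^p_X$ has pure codimension $\kappa$. The main obstacle, if any, is bookkeeping: one must be careful that wedging with $d\zeta$ truly does not annihilate information, i.e., that passing from $R_\kappa$ to $R_\kappa\otimes d\zeta$ is injective on the relevant currents, which is guaranteed because $d\zeta$ is a nowhere-vanishing holomorphic $N$-form and the identification in Remark~\ref{gnabbigare} is a bijection. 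Thus the equivalence transports directly from \eqref{sharp}, and the lemma follows.
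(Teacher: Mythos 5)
Your proposal is correct and follows exactly the route the paper intends: the paper states the lemma with no written proof beyond the remark that it follows from \eqref{nisse} and \eqref{sharp} (together with the purity discussion preceding \eqref{rappt} for the SEP and Remark~\ref{gnabbigare} for the $\Hom$-identification), and your argument is precisely a careful unpacking of those references. Nothing is missing.
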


\subsection{Smooth forms on $X$.}
To begin with, in view of Remark~\ref{gnabbigare} we notice that if $\mathcal{I}\subset\Om_D^p$ is a submodule
such that $\J\cdot \Om_D^p\subset \mathcal{I}$,
then we have the isomorphism 
\begin{equation}\label{ekorre}
\{\mu\in \Hom_{\hol_D}(\Om_D^p,\CH_D^Z);\, \mathcal{I}\wedge\mu=0\} \xrightarrow{\sim} \Hom_{\hol_X}(\Om_D^p/\mathcal{I}, \CH_D^{Z}),
\end{equation}
\begin{equation*}
\mu \mapsto (\varphi\mapsto \varphi\wedge \mu).
\end{equation*}
Since $\J^p=\hat\J^p$ generically on $Z$ and any $\mu\in\Hom_{\hol_D}(\Om_D^p,\CH_D^Z)$ 
has the SEP with respect to $Z$, in view of \eqref{ekorre} and Remark~\ref{gnabbigare}
we have 
\begin{eqnarray}\label{koppla}
\Hom_{\hol_X}(\Om_X^p,\CH_D^{Z}) &=& \Hom_{\hol_X}(\Om_{X,\text{K\"{a}hler}}^p,\CH_D^{Z}) \\
&=&
\{\mu\in \Hom_{\hol_D}(\Om_D^p,\CH_D^Z);\, \J\mu=d\J\wedge\mu=0\}. \nonumber
\end{eqnarray}


\begin{definition}\label{smoothdef} Let
\begin{equation*}
\Kernel_p i^*=\{\varphi\in\E^{p,*}_D;\, \varphi\wedge\mu=0, \, \mu\in\Hom_{\hol_X}(\Om_X^p,\CH_D^{Z})\},
\end{equation*}
cf.\ Remark~\ref{gnabbigare}, and let 
\begin{equation*}
\E_X^{p,*}:=\E_D^{p,*}/\Kernel_p i^*
\end{equation*}
be the sheaf of smooth $(p,*)$-forms on $X$.
\end{definition}

Notice that if $\varphi\in \Kernel_p i^*$, then $\debar\varphi\in\Kernel_p i^*$ and so
$\debar$ is well-defined on $\E_X^{p,*}$. 
We write $i^*$ for the natural map $\E_D^{p,*}\to \E_X^{p,*}$.
Notice that if $X$ is reduced, then, in view of \eqref{kopp}
and \eqref{koppla},
a smooth $(p,*)$-form $\varphi$ is in $\Kernel_p i^*$ if and only if $i^*\varphi=0$.
As in \cite[Section~4]{AL} one shows that $\E_X^{p,*}$ is intrinsic, i.e., does not dependent on the
embedding $i\colon X\to D$. 

\begin{proposition}
If $\varphi\in\E_D^{p,*}$ and $\varphi'\in\E_D^{p',*}$ then $i^*(\varphi\wedge\varphi')$ only depends on
$i^*\varphi$ and $i^*\varphi'$. Setting $i^*\varphi\wedge i^*\varphi':=i^*(\varphi\wedge\varphi')$,
$\E_X^{*,*}$ becomes a (bigraded) algebra. In particular, $\E_X^{p,*}$ is an $\E_X^{0,*}$-module.
\end{proposition}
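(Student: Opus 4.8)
The plan is to reduce the whole statement to the assertion that $\Kernel i^*$ is a graded ideal for the wedge product, and then to prove that ideal property by enlarging the class of test currents appearing in Definition~\ref{smoothdef}. The algebra and module claims are formal consequences of the single inclusion $\Kernel_p i^*\wedge\E_D^{p',*}\subseteq\Kernel_{p+p'}i^*$ (and, symmetrically, $\E_D^{p,*}\wedge\Kernel_{p'}i^*\subseteq\Kernel_{p+p'}i^*$). Indeed, if $\varphi_1-\varphi_2\in\Kernel_p i^*$ and $\varphi_1'-\varphi_2'\in\Kernel_{p'}i^*$, then $\varphi_1\wedge\varphi_1'-\varphi_2\wedge\varphi_2'=(\varphi_1-\varphi_2)\wedge\varphi_1'+\varphi_2\wedge(\varphi_1'-\varphi_2')$ lies in $\Kernel_{p+p'}i^*$, so the product descends; associativity and bigraded commutativity are then inherited from $\E_D^{*,*}$, and the module assertion is the case $p'=0$. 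The two inclusions are equivalent by graded commutativity of $\wedge$ on $\E_D^{*,*}$, so I would treat only the first.

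The key step, which I would isolate as a lemma, is an \emph{enlargement} of Definition~\ref{smoothdef}: if $\eta\in\Kernel_p i^*$, then $\eta\wedge T=0$ for every $T\in\W_D^{Z,(N-p,*)}$ with $\J^p\wedge T=0$. In other words, vanishing against the Coleff--Herrera test objects $\Hom_{\hol_X}(\Om_X^p,\CH_D^Z)$ already forces vanishing against the much larger class of SEP currents annihilating $\J^p$. Granting this, the ideal property is immediate: given $\eta\in\Kernel_p i^*$, $\psi\in\E_D^{p',*}$ and $\nu\in\Hom_{\hol_X}(\Om_X^{p+p'},\CH_D^Z)$, I would form $\psi\wedge\nu\in\W_D^{Z,(N-p,*)}$ and verify it is an admissible level-$p$ test current. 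For $\omega\in\J^p$ and $\omega'\in\Om_D^{p'}$ one has $\omega\wedge\omega'\in\J^{p+p'}$, since $\hat\J^p\wedge\Om_D^{p'}\subseteq\hat\J^{p+p'}$, $\J^p=\hat\J^p$ generically on $Z$, and $\Om_X^{p+p'}=\Om_D^{p+p'}/\J^{p+p'}$ is torsion free; hence $\omega'\wedge(\omega\wedge\nu)=\pm(\omega\wedge\omega')\wedge\nu=0$, and by the injectivity in Remark~\ref{gnabbigare} this gives $\omega\wedge\nu=0$, so that $\J^p\wedge(\psi\wedge\nu)=0$. The lemma then yields $\eta\wedge\psi\wedge\nu=\pm\,\eta\wedge(\psi\wedge\nu)=0$, which is exactly $\eta\wedge\psi\in\Kernel_{p+p'}i^*$.

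It remains to prove the lemma, and this is where the work lies. Since $\eta\wedge T$ is pseudomeromorphic with the SEP with respect to $Z$, by the uniqueness in \eqref{grymta} and Proposition~\ref{AWprop} it is enough to prove the vanishing on the dense smooth locus, so I may assume $Z=\{w=0\}$ in coordinates $(z,w)$. There I would expand $T=\sum_\alpha T_\alpha(z)\wedge\debar(dw/w^{\alpha+1})$ via \eqref{grymta} and Remark~\ref{gnabbigare}, and use the two elementary facts that $d\bar w_\ell\wedge\debar(dw_\ell/w_\ell^{\alpha_\ell+1})=0$ and that multiplying a smooth coefficient by $\debar(dw/w^{\alpha+1})$ extracts only the finite holomorphic $w$-jet of that coefficient along $Z$. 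These collapse $\eta\wedge T$ to a sum, indexed by $\delta$, of terms $L_\delta(\{T_\alpha\})\wedge\debar(dw/w^{\delta+1})$, where $L_\delta$ is an $\E_Z$-linear expression in the $T_\alpha$ whose coefficient functions are the holomorphic $w$-derivatives of the components of $\eta$; by uniqueness of \eqref{grymta}, $\eta\wedge T=0$ iff each $L_\delta(\{T_\alpha\})=0$. The identical computation for $\mu\in\Hom_{\hol_X}(\Om_X^p,\CH_D^Z)$ shows that $\eta\in\Kernel_p i^*$ says precisely that each $L_\delta$ vanishes on all \emph{holomorphic} admissible tuples $\{\mu_\alpha\}$. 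Finally, by Proposition~\ref{AWprop} it suffices to evaluate $L_\delta(\{T_\alpha\})$ where the $T_\alpha$ are smooth; there, using that $\Om_X^p$ is a free $\hol_Z$-module on $X_{p\text{-reg}}$ (the corollary to Proposition~\ref{ulv}), the constraint $\J^p\wedge T=0$ is a pointwise $\C$-linear condition whose solution space is spanned by the values of holomorphic admissible tuples, and since $L_\delta$ is pointwise linear and kills all such values it kills $\{T_\alpha\}$ as well.

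The main obstacle is exactly this last passage. The defining condition of $\Kernel_p i^*$ tests $\eta$ only against holomorphic Coleff--Herrera currents, whereas the ideal property inevitably produces test currents $\psi\wedge\nu$ with smooth, non-holomorphic coefficients; bridging the gap requires the enlargement lemma. Its proof hinges on the observation that membership in $\Kernel_p i^*$ is encoded entirely in the holomorphic $w$-jets of $\eta$ transverse to $Z$ — data insensitive to the tangential $(z,\bar z)$-behaviour of the test current — combined with the local freeness of $\Om_X^p$ on $X_{p\text{-reg}}$, which lets one upgrade the pointwise identity from holomorphic to arbitrary coefficients.
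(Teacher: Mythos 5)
Your formal reduction — descend the product by showing $\Kernel_p i^*\wedge\E_D^{p',*}\subseteq\Kernel_{p+p'}i^*$ — is exactly the paper's starting point, but from there you take a much heavier route. The paper disposes of the inclusion in three lines: writing $\varphi'=\sum_i\xi_i\wedge\alpha_i$ with $\xi_i\in\E_D^{0,*}$ and $\alpha_i\in\Om_D^{p'}$ holomorphic, one checks that $\alpha_i\wedge\mu$ is again $\debar$-closed, has the SEP, and is killed by $\J$ and $d\J$, hence lies in $\Hom_{\hol_X}(\Om_X^{p},\CH_D^Z)$ by \eqref{koppla}; so $\varphi'\wedge\mu$ is a sum of terms $\xi\wedge\nu$ with $\nu$ an admissible level-$p$ test current, and $\varphi\wedge\varphi'\wedge\mu=\pm\sum\xi_i\wedge(\varphi\wedge\nu_i)=0$ directly from Definition~\ref{smoothdef}. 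In other words, the test current $\psi\wedge\nu$ you construct never forces you outside the Coleff--Herrera class up to smooth $(0,*)$-factors, so your ``enlargement lemma'' (vanishing against all of $\W_D^{Z,(N-p,*)}$ annihilated by $\J^p$) is far more than is needed. That lemma is true — the paper proves it later, in the paragraph preceding Definition~\ref{WXdef}, as a consequence of Proposition~\ref{rutschkana} — but it is not the efficient tool here. Your verification that $\J^p\wedge(\psi\wedge\nu)=0$ via $\J^p\wedge\Om_D^{p'}\subseteq\J^{p+p'}$ and torsion-freeness is correct and is a nice observation, just unnecessary for this statement.

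The real problem is the last step of your proof of the enlargement lemma. You assert that at a generic smooth point the constraint $\J^p\wedge T=0$ is a pointwise linear condition ``whose solution space is spanned by the values of holomorphic admissible tuples,'' and you attribute this to the local freeness of $\Om_X^p$ over $\hol_Z$ on $\Xpreg$. Freeness of $\Om_X^p$ does not give this: what you need is that the coherent sheaf of holomorphic solutions of the (holomorphic) linear system defined by Taylor-expanding generators of $\J^p$ in $w$ evaluates, at a generic point, onto the entire pointwise kernel of that system. That is a separate statement (it follows from the generic constancy of the rank of a holomorphic matrix, restricted to points of $\Xpreg$ reached after cutting away a proper subvariety using the SEP), and without it your argument that $L_\delta$ kills all smooth admissible tuples because it kills all holomorphic ones does not close. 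There is also unacknowledged bookkeeping: a general $T\in\W_D^{Z,(N-p,*)}$ has components containing $dw$-factors, so the expansion is not simply $\sum_\alpha T_\alpha(z)\wedge\debar(dw/w^{\alpha+\mathbf{1}})$ with $T_\alpha\in\W_Z^{n-p,*}$ (compare the generators of $i_*\Ba_X^1$ in Section~\ref{Exsektion}). Both issues are repairable, but as written the key step is asserted rather than proved; I would recommend replacing the whole mechanism by the paper's direct decomposition of $\varphi'\wedge\mu$.
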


\begin{proof}
Assume that $\varphi\in \Kernel_p i^*$. We must show that $\varphi\wedge\varphi'\in\Kernel_{p+p'}i^*$.
Suppose that $\mu\in\Hom_{\hol_X}(\Om_X^{p+p'},\CH_D^Z)$. Then $\varphi'\wedge\mu$ is 
(a sum of terms) of the form $\xi\wedge\nu$, where $\xi\in\E_D^{0,*}$ and 
$\nu\in\Hom_{\hol_X}(\Om_X^{p},\CH_D^Z)$. Since, by definition of $\Kernel_p i^*$, $\varphi\wedge\nu=0$, it follows that
$\varphi\wedge\varphi'\wedge\mu=0$, and hence $\varphi\wedge\varphi'\in\Kernel_{p+p'}i^*$.
\end{proof}

\begin{proposition}\label{CHvsR}
Let $\mathcal{R}=\mathcal{R}_\kappa+\mathcal{R}_{\kappa+1}+\cdots$ be the residue current associated with 
$\Om^p_X$ defined in Section~\ref{holform} and let $\varphi\in \E_D^{p,*}$. 
Then $\varphi\in\Kernel_p i^*$ if and only if $\mathcal{R}_\kappa\wedge\varphi=0$. 
\end{proposition}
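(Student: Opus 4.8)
My plan is to establish both implications after reducing, via the standard extension property, to the dense open set $\Xpreg\subset Z$ on which $\hol_X$ and $\Om_X^p$ are Cohen--Macaulay. The observation that makes this reduction possible is that for a fixed $\mu\in\Hom_{\hol_X}(\Om_X^p,\CH_D^Z)$ the current $\varphi\wedge\mu$ is pseudomeromorphic with the SEP with respect to $Z$ (since $\mu$ has it and $\varphi$ is smooth), and likewise $\mathcal{R}_\kappa\wedge\varphi$ has the SEP by Lemma~\ref{nissen}. As $Z\setminus\Xpreg$ is an analytic subset of positive codimension in $Z$, the SEP (or Proposition~\ref{AWprop}) shows that such a current vanishes if and only if it vanishes on $\Xpreg$. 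Hence it is enough to prove both implications locally on $\Xpreg$.

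Next I would exploit that $\Om_X^p$ is Cohen--Macaulay on $\Xpreg$, so that there the residue current reduces to its top term, $R=R_\kappa$, and $R_\kappa$ is a Coleff--Herrera current, i.e.\ $\debar R_\kappa=0$ on $\Xpreg$. I would then check that for every holomorphic local section $\xi$ of $E_\kappa^*$ the current $\xi\mathcal{R}_\kappa$ lies in $\Hom_{\hol_X}(\Om_X^p,\CH_D^Z)$ over $\Xpreg$. Indeed it is Coleff--Herrera because $\debar R_\kappa=0$ there, and it annihilates $\hat\J^p$: by Lemma~\ref{nissen} we have $\mathcal{R}_\kappa\wedge\psi=0$ whenever $\psi\in\J^p$, and since $\hat\J^p\subset\J^p$ this yields $\J\cdot\xi\mathcal{R}_\kappa=d\J\wedge\xi\mathcal{R}_\kappa=0$; by \eqref{koppla} this is precisely the asserted membership.

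The forward implication is then immediate: if $\varphi\in\Kernel_p i^*$ then $\varphi\wedge(\xi\mathcal{R}_\kappa)=0$, that is $\xi(\mathcal{R}_\kappa\wedge\varphi)=0$, for every such $\xi$, and since local holomorphic frames of $E_\kappa^*$ separate the values in $E_\kappa$ we conclude $\mathcal{R}_\kappa\wedge\varphi=0$ on $\Xpreg$, hence everywhere. For the converse I would invoke the duality principle on the Cohen--Macaulay locus: every germ of $\Hom_{\hol_X}(\Om_X^p,\CH_D^Z)$ along $\Xpreg$ is of the form $\xi\mathcal{R}_\kappa$ for some holomorphic $\xi$, this being the residue-current realization of Grothendieck local duality. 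Granting this, $\mathcal{R}_\kappa\wedge\varphi=0$ forces $\varphi\wedge\mu=\pm\,\xi(\mathcal{R}_\kappa\wedge\varphi)=0$ on $\Xpreg$ for every $\mu$, and the SEP propagates this to all of $Z$, so $\varphi\in\Kernel_p i^*$.

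The step I expect to be the main obstacle is the surjectivity used in the last paragraph, namely that $R_\kappa$ generates all annihilating Coleff--Herrera currents; this is the dual counterpart of the implication $\mathcal{R}_\kappa\wedge\psi=0\iff\psi\in\J^p$ of Lemma~\ref{nissen}, and it is exactly here that Cohen--Macaulayness of $\Om_X^p$ is used. It also explains why a purely global argument would fail: off $\Xpreg$ the current $R_\kappa$ need not be $\debar$-closed, so only the $\xi$ with $f_{\kappa+1}^*\xi=0$ would give admissible test currents, and these detect $\mathcal{R}_\kappa\wedge\varphi$ only modulo $\Image f_{\kappa+1}$. The reduction to $\Xpreg$ via the SEP is precisely what removes this difficulty.
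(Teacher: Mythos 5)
Your overall strategy is the paper's: both implications are reduced to the statement that the currents $\xi\cdot\mathcal{R}_\kappa$ realize $\Hom_{\hol_X}(\Om_X^p,\CH_D^Z)$ (the paper cites \cite{ANoether} for the isomorphism $\HH^\kappa(\hol(E_\bullet^*),f_\bullet^*)\simeq\Hom_{\hol_X}(\Om_X^p,\CH_D^Z)$, $[\xi]\mapsto\xi\cdot\mathcal{R}_\kappa$), combined with a reduction to the generic Cohen--Macaulay locus and the SEP. Two remarks, one of which is a genuine gap. First, the direction you reduce to $\Xpreg$ is the one that does not need it: since the duality map above is surjective \emph{globally}, every $\mu\in\Hom_{\hol_X}(\Om_X^p,\CH_D^Z)$ is $\xi\cdot\mathcal{R}_\kappa$ with $f_{\kappa+1}^*\xi=0$, so $\mathcal{R}_\kappa\wedge\varphi=0$ immediately gives $\varphi\wedge\mu=0$ for all $\mu$, i.e., $\varphi\in\Kernel_p i^*$, with no restriction to $\Xpreg$ and no SEP argument. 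Your closing paragraph has the logic slightly inverted: the ``detection only modulo $\Image f_{\kappa+1}$'' phenomenon is the obstruction in the \emph{forward} direction, which is indeed where the paper localizes to the CM locus.

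The genuine gap is in that forward direction: you assert that on $\Xpreg$ ``the residue current reduces to its top term, $R=R_\kappa$, and $\debar R_\kappa=0$ there.'' This is automatic for a \emph{minimal} resolution, but the resolution \eqref{karvkplx} fixed in Section~\ref{holform} is arbitrary, and for a non-minimal resolution with a generic choice of Hermitian metrics one has $\debar\mathcal{R}_\kappa=f_{\kappa+1}\mathcal{R}_{\kappa+1}$ with $\mathcal{R}_{\kappa+1}=a_1\mathcal{R}_\kappa$ for a smooth but generally nonzero $(0,1)$-form $a_1$ on the CM locus (cf.\ Proposition~\ref{absolut}); so $\mathcal{R}_{\kappa+1}$, and hence $\debar\mathcal{R}_\kappa$, need not vanish on $\Xpreg$, and $\xi\cdot\mathcal{R}_\kappa$ for arbitrary holomorphic $\xi$ need not be Coleff--Herrera there. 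Consequently your ``frames separate values'' step is not available as stated. The paper repairs exactly this point: on the CM locus the minimal resolution is a direct summand of the given one, its top current is $\debar$-closed and lies in $\Hom_{\hol_X}(\Om_X^p,\CH_D^Z)$, and the comparison argument from the proof of \cite[Theorem~1.2]{ANoether} transfers the vanishing $\mathcal{R}_\kappa^{\min}\wedge\varphi=0$ to $\mathcal{R}_\kappa\wedge\varphi=0$ generically, after which the SEP concludes as you say. Everything else in your proposal (the SEP reductions, the verification that $\xi\cdot\mathcal{R}_\kappa$ annihilates $\hat\J^p$ via \eqref{koppla} and Lemma~\ref{nissen}) is correct and matches the paper.
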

\begin{proof}
Recall the complex \eqref{karvkplx} that we used to define $R$ and therefore also $\mathcal{R}$. 
Consider the dual complex
\begin{equation*}
\cdots \stackrel{f^*_{\kappa+1}}{\longleftarrow}\hol(E^*_\kappa) \stackrel{f^*_{\kappa}}{\longleftarrow}
\cdots \stackrel{f^*_{1}}{\longleftarrow} \hol(E^*_0) \leftarrow 0,
\end{equation*}
where $f_j^*$ is the transpose of $f_j$. 
If $\xi\in \hol(E^*_\kappa)$ and 
$f_{\kappa+1}^*\xi=0$, then, in view of Lemma~\ref{nissen},
\begin{equation}\label{fika}
\debar(\xi\cdot \mathcal{R}_\kappa) = \xi\cdot \debar\mathcal{R}_\kappa
=\xi\cdot f_{\kappa+1}\mathcal{R}_\kappa = f^*_{\kappa+1}\xi\cdot \mathcal{R}_\kappa = 0.
\end{equation}
Hence, $\xi\cdot \mathcal{R}_\kappa$ is a $\debar$-closed (scalar-valued) pseudomeromorphic $(N-p,\kappa)$-current with support on $Z$.
Moreover, since $\J^p\wedge \mathcal{R}=0$, it follows that $\xi\cdot \mathcal{R}_\kappa\in \Hom_{\hol_X}(\Om_X^p,\CH_D^{Z})$.
If $\xi=f^*_\kappa \xi'$, since $\mathcal{R}_{\kappa-1}=0$,
a computation similar to \eqref{fika} shows that $\xi\cdot \mathcal{R}_\kappa=0$.
Hence, we have a map
\begin{equation}\label{bibblan}
\HH^\kappa(\hol(E^*_\bullet), f^*_\bullet) \to \Hom_{\hol_X}(\Om_X^p,\CH_D^{Z}),\quad
[\xi]\mapsto \xi\cdot \mathcal{R}_\kappa.
\end{equation}
By \cite[Theorem~1.5]{ANoether}, this map is an isomorphism.
If $\mathcal{R}_\kappa\wedge\varphi=0$ thus $\varphi\in\Kernel_p i^*$. 

Conversely, assume that $\varphi\in\Kernel_p i^*$. If $\Om_X^p$ is Cohen--Macaulay and \eqref{karvkplx}
is a resolution of minimal length, i.e., if $E_j=0$ for $j > \kappa$, then $\debar\mathcal{R}_\kappa=0$ and so
$\mathcal{R}_\kappa\in \Hom_{\hol_X}(\Om_X^p,\CH_D^{Z})$. In this case, thus, $\mathcal{R}_\kappa\wedge\varphi=0$. In general,
$\Om_X^p$ is Cohen--Macaulay generically on $Z$ and the minimal resolution is a direct summand 
in any resolution. It follows, cf.\ the proof of \cite[Theorem~1.2]{ANoether}, that $\mathcal{R}_\kappa\wedge\varphi=0$
generically on $Z$. By the SEP it then holds everywhere.
\end{proof}

\begin{corollary}\label{gurka}
There is a natural injective 
map $\Om_X^p\hookrightarrow\E_X^{p,0}$.
\end{corollary}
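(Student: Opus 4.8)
The plan is to induce the map from the inclusion $\Om_D^p\hookrightarrow\E_D^{p,0}$ of holomorphic $p$-forms into smooth $(p,0)$-forms, followed by the natural quotient $\E_D^{p,0}\to\E_X^{p,0}$. This gives a map $\Om_D^p\to\E_X^{p,0}$, and the task is to show that it factors through $\Om_X^p=\Om_D^p/\J^p$ and is injective on the quotient.

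First I would verify that $\J^p$ is contained in $\Kernel_p i^*$, so that the composite kills $\J^p$ and descends to $\Om_X^p$. For a holomorphic form $\varphi\in\Om_D^p$, Lemma~\ref{nissen} gives that $\varphi\in\J^p$ precisely when $\mathcal{R}_\kappa\wedge\varphi=0$, while Proposition~\ref{CHvsR} characterizes membership in $\Kernel_p i^*$ by the same condition $\mathcal{R}_\kappa\wedge\varphi=0$. Hence $\J^p\subseteq\Kernel_p i^*$ and the induced map $\Om_X^p\to\E_X^{p,0}$ is well-defined.

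For injectivity I would compute the kernel of $\Om_D^p\to\E_X^{p,0}$ directly: a holomorphic form $\varphi$ maps to zero exactly when $\varphi\in\Kernel_p i^*$, which by Proposition~\ref{CHvsR} means $\mathcal{R}_\kappa\wedge\varphi=0$, which by Lemma~\ref{nissen} means $\varphi\in\J^p$. Thus the kernel is exactly $\J^p$, and the induced map on $\Om_X^p=\Om_D^p/\J^p$ is injective. The essential point---really the only nontrivial input---is that the two a priori different descriptions of annihilation, one in terms of $\Kernel_p i^*$ and one in terms of $\J^p$, are reconciled through the single current $\mathcal{R}_\kappa$; granted those two results, everything else is formal. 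Finally, since both $\Om_X^p$ and $\E_X^{p,0}$ are intrinsic, the resulting map is independent of the local embedding $i\colon X\to D$ and thus globally defined.
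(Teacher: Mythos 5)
Your argument is correct and follows essentially the same route as the paper: both well-definedness and injectivity are obtained by combining Proposition~\ref{CHvsR} with Lemma~\ref{nissen} to conclude that $\Kernel_p i^*\cap\Om_D^p=\J^p$. The only cosmetic difference is that the paper takes the inclusion $\J^p\subset\Kernel_p i^*$ as immediate from the definitions rather than routing it through the residue current, but the substance is identical.
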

\begin{proof}
Since $\J^p\subset\Kernel_p i^*$, the inclusion $\Om_D^p\subset \E_D^{p,0}$ induces a map $\Om_X^p\to\E_X^{p,0}$.
By Proposition~\ref{CHvsR}, if $\varphi\in\Kernel_p i^*$, then $\mathcal{R}_\kappa\wedge\varphi=0$ and so
$\Kernel_p i^*\cap \Om_D^p=\J^p$ in view of Lemma~\eqref{nissen}. It follows that $\Om_X^p\to\E_X^{p,0}$ is injective.
\end{proof}

The following result is not necessary for this paper but is included here for possible future reference.
We believe that it is interesting in its own right since it shows that the 
de~Rham operator $d=\partial+\debar$ is well-defined on $\E_X$.

\begin{proposition}
The operator $\partial\colon \mathscr{E}^{p,q}_{D}\rightarrow\mathscr{E}^{p+1,q}_{D}$ 
induces a well-defined operator
$\partial\colon\mathscr{E}^{p,q}_{X}\rightarrow\mathscr{E}^{p+1,q}_{X}$.
\end{proposition}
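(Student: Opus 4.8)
The plan is to show that $\partial$ maps $\Kernel_p i^*$ into $\Kernel_{p+1} i^*$; since $\partial\colon\E_D^{p,q}\to\E_D^{p+1,q}$ on the ambient domain, this is exactly what is needed for $\partial$ to descend to $\E_X^{p,q}\to\E_X^{p+1,q}$. Thus, given $\varphi\in\Kernel_p i^*$ of bidegree $(p,q)$ and $\nu\in\Hom_{\hol_X}(\Om_X^{p+1},\CH_D^Z)$, I must prove $\partial\varphi\wedge\nu=0$. The first observation is that, since $\nu$ is pseudomeromorphic with the SEP with respect to $Z$ and $\partial\varphi$ is smooth, the product $\partial\varphi\wedge\nu$ is pseudomeromorphic, has support in $Z$, and has the SEP. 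Consequently it suffices to prove that $\partial\varphi\wedge\nu$ vanishes on the dense open subset $\Xpreg$ of $Z$: its support is then contained in the proper analytic subset $Z\setminus\Xpreg$, and by the SEP it must vanish identically. This mirrors the reasoning showing that $\debar$ is well-defined, the difference being that $\debar\mu=0$ for $\mu\in\Hom_{\hol_X}(\Om_X^p,\CH_D^Z)$ makes the $\debar$-case immediate and keeps the index $p$ fixed, whereas $\partial$ genuinely changes $p$.

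I would then work locally on $\Xpreg$, where $Z=\{w=0\}$ is smooth in coordinates $(z,w)$ and $\Om_X^p$ is a free $\hol_Z$-module. Two facts drive the computation. First, for each holomorphic one-form $dz_k$ the current $dz_k\wedge\nu$, of bidegree $(N-p,\kappa)$, again lies in $\Hom_{\hol_X}(\Om_X^p,\CH_D^Z)$: wedging the Coleff--Herrera current $\nu$ by a holomorphic form preserves $\debar$-closedness, annihilation by $\bar h$ for $h\in\J_Z$, pseudomeromorphicity and the SEP (so $dz_k\wedge\nu\in\CH_D^Z$ by \eqref{CHdef}), and it clearly preserves the conditions $\J\nu=d\J\wedge\nu=0$ that, by \eqref{koppla}, characterize membership. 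Second, $\Kernel_p i^*$ is stable under the operators $\partial_{z_k}$ of coefficient-differentiation along $Z$. Indeed, on $\Xpreg$ the representation \eqref{grymta}, together with the explicit $\hol_Z$-basis of $\Om_X^p$ furnished on $\Xpreg$ by Proposition~\ref{ulv} and its corollary, identifies $\varphi\in\Kernel_p i^*$ with the identical vanishing in $z$ of a fixed collection of $w$-Taylor coefficients of the $d\zeta$-components of $\varphi$ (those dual, through the residue pairing, to the chosen basis of $\Om_X^p$). Since $\partial_{z_k}$ commutes with $\partial_w^\beta|_{w=0}$ and merely differentiates these coefficient functions in $z$, it preserves their vanishing. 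This is the non-reduced analogue of the reduced-case fact that pullback to $X_{reg}$ commutes with $\partial$.

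Finally I would combine the two facts. Writing $\nu=\sum_\alpha\nu_\alpha(z)\wedge\debar\frac{dw}{w^{\alpha+\ett}}$ as in \eqref{grymta}, each $dw_j\wedge\nu$ vanishes, so in the decomposition $\partial\varphi=\sum_k dz_k\wedge\partial_{z_k}\varphi+\sum_j dw_j\wedge\partial_{w_j}\varphi$ only the first sum survives after wedging with $\nu$, giving
\[
\partial\varphi\wedge\nu=(-1)^{p+q}\sum_k(\partial_{z_k}\varphi)\wedge(dz_k\wedge\nu)
\]
on $\Xpreg$. By the first fact $dz_k\wedge\nu\in\Hom_{\hol_X}(\Om_X^p,\CH_D^Z)$, and by the second fact $\partial_{z_k}\varphi\in\Kernel_p i^*$, so each summand vanishes by the very definition of $\Kernel_p i^*$. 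Hence $\partial\varphi\wedge\nu=0$ on $\Xpreg$, and then everywhere by the SEP.

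I expect the main obstacle to be the second fact, the $\partial_{z_k}$-stability of $\Kernel_p i^*$. It is the step where the non-reduced structure genuinely enters, and making the concrete description of $\Kernel_p i^*$ on $\Xpreg$ precise — expressing it, via the $\hol_Z$-basis of $\Om_X^p$ and the residue pairing, as the vanishing of a fixed family of $w$-Taylor coefficients viewed as functions of $z$ — is the technical heart of the argument. Once this and the SEP reduction to $\Xpreg$ are in place, the remaining manipulations are formal.
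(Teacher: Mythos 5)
Your reduction to $\Xpreg$ via the SEP and your first fact (that $dz_k\wedge\nu$, and more generally $\alpha\wedge\nu$ for any $\alpha\in\Om_D^1$, lies in $\Hom_{\hol_X}(\Om_X^p,\CH_D^Z)$) are both fine, and the latter is indeed a key observation in the paper's argument. But the final combination step rests on two sub-claims that are false in general. First, $\Kernel_p i^*$ is \emph{not} stable under the coefficient derivatives $\partial_{z_k}$: on $\Xpreg$ membership in $\Kernel_p i^*$ is not the vanishing of a fixed collection of $w$-Taylor coefficients but of certain combinations $\pi_*(w^\alpha\varphi\wedge\mu_j)$ whose coefficients depend on $z$ (through the generators $\mu_j$ and the basis $b_k$), and $\partial_{z_k}$ does not commute with these. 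Concretely, in the second example of Section~\ref{Exsektion} one has $f=z_1w_2-z_2w_1\in\J\subset\Kernel_0 i^*$ but $\partial_{z_1}f=w_2\notin\J=\Kernel_0 i^*\cap\hol_D$, even at points of $X_{0\text{-reg}}$, where locally $\J=\langle w_1^2,\,w_2-(z_2/z_1)w_1\rangle$ and $w_2\equiv(z_2/z_1)w_1\not\equiv 0$. Second, $dw_j\wedge\nu$ need not vanish: the representation \eqref{grymta} applies to $(N,*)$-currents in $\W_D^{Z,*}$, whereas $\nu$ has bidegree $(N-p-1,\kappa)$ and in general has components of degree $<\kappa$ in $dw$; in the same example the generator $\big((z_2w_1+z_1w_2)\,dz_2\wedge dw_1\wedge dw_2+w_1w_2\,dz_1\wedge dz_2\wedge dw_2\big)\wedge\debar(1/w_2^2)\wedge\debar(1/w_1^2)$ of $i_*\Ba_X^{1}$ is not annihilated by $dw_1\wedge{}$. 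The total sum $\partial\varphi\wedge\nu$ does vanish, but not term by term in your decomposition, so the argument as written does not close.

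The paper avoids all of this by an integration by parts. One first checks that $\partial$ maps $\Hom_{\hol_X}(\Om_X^{p+1},\CH_D^Z)$ into $\Hom_{\hol_X}(\Om_X^{p},\CH_D^Z)$, using $\J\partial\mu=\partial(\J\mu)\pm d\J\wedge\mu=0$ and $d\J\wedge\partial\mu=-\partial(d\J\wedge\mu)=0$ together with \eqref{koppla}, and that any $\varphi\in\Kernel_p i^*$ annihilates $\Hom_{\hol_X}(\Om_X^{p+1},\CH_D^Z)$ (this is exactly your first fact). Then $\partial\varphi\wedge\mu=\partial(\varphi\wedge\mu)\pm\varphi\wedge\partial\mu=0$. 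This Leibniz identity is the missing ingredient that replaces your two sub-claims; with it no reduction to $\Xpreg$ and no coordinate decomposition of $\partial$ is needed.
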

\begin{proof}
We show first that $\partial\colon \mathscr{C}_D^{N-p-1,\kappa}\to \mathscr{C}_D^{N-p,\kappa}$ induces a well-defined
operator
\begin{equation}\label{partial}
\partial\colon \Hom_{\hol_X}(\Om_X^{p+1},\CH_D^Z) \to \Hom_{\hol_X}(\Om_X^{p},\CH_D^Z),
\end{equation}
cf.\ Remark~\ref{gnabbigare}.
Let $\mu\in\Hom_{\hol_X}(\Om_X^{p+1},\CH_D^Z)$, cf.\ \eqref{koppla}.
By \cite[Theorem~3.7]{AWdirect} we have $\partial\mu\in\Hom_{\hol_D}(\Om_D^p,\mathcal{W}^{Z,\kappa}_{D})$ and we 
certainly have $\debar\partial\mu=0$. Hence, $\partial\mu\in \Hom_{\hol_{D}}(\Om^{p}_{D},\CH^{Z}_{D})$.
Moreover, $\mathcal{J}\partial\mu=\partial(\mathcal{J}\mu)\pm d\mathcal{J}\wedge\mu=0$ 
and $d\mathcal{J}\wedge\partial\mu=-\partial(d\mathcal{J}\wedge\mu)=0$.  
In view of \eqref{koppla} thus $\partial\mu\in\Hom_{\hol_X}(\Om_X^{p},\CH_D^Z)$.

Next we notice that if $\varphi\in\E_D^{p,*}$ annihilates 
$\Hom_{\hol_X}(\Om_X^{p},\CH_D^Z)$, then $\varphi$ annihilates $\Hom_{\hol_X}(\Om_X^{p+1},\CH_D^Z)$.
In fact, if $\mu\in\Hom_{\hol_X}(\Om_X^{p+1},\CH_D^Z)$ then $\alpha\wedge\mu\in\Hom_{\hol_X}(\Om_X^{p},\CH_D^Z)$
for all $\alpha\in\Om_D^1$. Hence, $\varphi\wedge\alpha\wedge\mu=0$ for all such $\alpha$ and so $\varphi\wedge\mu=0$.

Assume now that $\varphi\in\Kernel_p i^*$, cf.\ Definition~\ref{smoothdef}.
Then by what we have just noticed,  $\varphi$ annihilates  $\Hom_{\hol_X}(\Om_X^{p+1},\CH_D^Z)$.
If $\mu\in\Hom_{\hol_X}(\Om_X^{p+1},\CH_D^Z)$, in view of \eqref{partial} we thus get
\begin{equation*}
\partial\varphi\wedge\mu=\partial(\varphi\wedge\mu)\pm\varphi\wedge\partial\mu =0.
\end{equation*} 
Hence, $\partial\varphi\in\Kernel_{p+1}i^{*}$ and we conclude that
$\partial(\Kernel_{p}i^{*})\subset \Kernel_{p+1}i^{*}$, from which the proposition immediately follows.  

\end{proof}

\subsection{Smooth forms on $\Xpreg$}\label{smooth-reg}
Here we give a more concrete description of $\E_X^{p,*}$ on $X_{p\text{-reg}}$. 
Choose local coordinates $(z,w)$ centered at a point in $\Xpreg$ such that $Z=\{w=0\}$. Recall that the local coordinates
induce an $\hol_Z$-module structure on $\Om_X^p$. On $\Xpreg$ we get a sequence of mappings
\begin{equation}\label{horlur}
(\hol_Z)^\nu \stackrel{\sim}{\to} \Om_X^p \stackrel{\sim}{\to}
\Hom_{\hol_X}\big(\Hom_{\hol_X}(\Om_X^p,\CH_D^Z),\CH_D^Z\big)
\to (\CH_D^Z)^m
\to (\Om_Z^n)^{m\tilde M}
\end{equation}
defined as follows. On $\Xpreg$, $\Om_X^p$ is a free $\hol_Z$-module  and the first mapping is the isomorphism
given by an $\hol_Z$-basis $\{b_k\}\subset\Om_D^p$ of $\Om_X^p$. 

The second mapping is 
defined on all of $X$ and is the natural mapping
into a double dual, $\varphi\mapsto (\mu\mapsto \varphi\wedge\mu)$, cf.\ Remark~\ref{gnabbigare}. 
It is injective since if $\varphi\in\Om_D^p$ and
$\varphi\wedge\mu=0$ for all $\mu\in\Hom_{\hol_X}(\Om_X^p,\CH_D^Z)$, then $\varphi\in\J^p$; cf.\
the proof of Corollary~\ref{gurka}. It follows from a fundamental theorem of J.-E.\ Roos that the second mapping in fact is an 
isomorphism if and only if $\Om_X^p$ is $S_2$, cf.\ \cite[Theorem~7.3]{AL} and the discussion following it. On $\Xpreg$, $\Om_X^p$
is Cohen--Macaulay, in particular $S_2$, and thus the second mapping is an isomorphism on $\Xpreg$. 

The third mapping depends on a choice of generators $\mu_1,\ldots,\mu_m$ of $\Hom_{\hol_X}(\Om_X^p,\CH_D^{Z})$.
An element $h$ of the double-$\Hom$ then is mapped to the tuple $(h\wedge\mu_1,\ldots,h\wedge\mu_m)$.

For the fourth mapping we choose $M>0$ such that $w^\alpha\mu_j=0$ for $j=1,\ldots,m$ and $w^\alpha\in\J$ if $|\alpha|\geq M$.
Then a tuple $(\nu_j)_j\in(\CH_D^Z)^m$ is mapped to the $m\widetilde M$-tuple $(\pi_*(w^\alpha\nu_j))_{j,|\alpha|<M}$, 
where $\widetilde M$ is the number of monomials $w^\alpha$ with $|\alpha|<M$ and $\pi$ is the projection
$\pi(z,w)=z$. Since $w^\alpha\nu_j$ are $\debar$-closed of bidegree $(N,\kappa)$ in $D$, $\pi_*(w^\alpha\nu_j)$ are $\debar$-closed
of bidegree $(n,0)$ on $Z$, i.e., holomorphic $n$-forms on $Z$. 

Now all mappings in \eqref{horlur} are explained. We will see, Lemma~\ref{smirnoff},
that the composition of these mappings, denoted by $\widetilde T$ from now on, is injective and $\hol_Z$-linear and thus given by a matrix,
also denoted by $\widetilde T$, 
with $\Om_Z^n$-entries.

\smallskip

In order to describe $\E_X^{p,*}$ on $\Xpreg$ we extend $\widetilde T$ to $\E_X^{p,*}$. First,
as for $\Om_X^p$, cf.\ the paragraph before Proposition~\ref{ulv}, we define a $\E_Z^{0,*}$-module structure on $\E_X^{p,*}$ by
\begin{equation*}
\psi\wedge\varphi:=\pi^*\psi \wedge \varphi, \quad \psi\in\E_Z^{0,*}, \, \varphi\in \E_X^{p,*}.
\end{equation*}
The mapping $\Om_X^p\to (\CH_D^Z)^m$ in \eqref{horlur} extends to the mapping
\begin{equation}\label{map1}
\E^{p,*}_X\to (\W^Z_D)^m,\quad \varphi \mapsto (\varphi\wedge\mu_1,\ldots,\varphi\wedge\mu_m).
\end{equation}
Notice, cf.\ Definition~\ref{smoothdef}, that
if $\varphi\in\E_X^{p,*}$ and $\varphi\wedge\mu_j=0$ for all $j$, then $\varphi=0$. Thus, \eqref{map1}
is injective. The mapping $(\CH_D^Z)^m\to (\Om_Z^n)^{m\tilde M}$ in \eqref{horlur} extends to
\begin{equation}\label{mapny}
(\W_D^Z)^m\to (\W_Z)^{m\widetilde M},\quad
(\tau_j)_j\mapsto \big(\pi_*(w^\alpha\tau_j)\big)_{j,|\alpha|<M}.
\end{equation}
In view of \cite[Proposition~4.1 and (4.3)]{AWreg}
and \cite[Theorem~3.5]{AWdirect}, \eqref{mapny} is injective.
Composing \eqref{map1} and \eqref{mapny} we get the injective map
\begin{equation}\label{mapT}
T\colon \E^{p,*}_X\to (\W_Z^{n,*})^{m\widetilde M},\quad
T\varphi=\big(\pi_*(\varphi \wedge w^\alpha \mu_j)\big)_{|\alpha|<M, j=1,\ldots,m}.
\end{equation}
Notice that the restriction of $T$ to $\Om_X^p$ is (after the identification $\Om_X^p\simeq (\hol_Z)^\nu$)
indeed is the mapping $\widetilde T$.
For future reference we also notice that
\begin{equation}\label{mapTsnok}
\widetilde T\colon (\hol_Z)^\nu\to (\Om_Z^{n})^{m\tilde M}, \quad
(h_k)_{k=1,\ldots,\nu}\mapsto \big(\pi_*(\sum_kh_kb_k \wedge w^\alpha \mu_j)\big)_{|\alpha|<M, j=1,\ldots,m}
\end{equation}
on $X_{p\text{-reg}}$. 

\begin{lemma}\label{smirnoff}
The injective mappings $T$ and $\widetilde T$ are $\E_Z^{0,*}$-linear and $\hol_Z$-linear, respectively.
Any $\varphi\in\E_X^{p,*}$
can be written
\begin{equation}\label{klamp}
\varphi = \sum_{k=1}^\nu \varphi_k\wedge b_k + \Kernel_p i^*, \quad \varphi_k\in\E_Z^{0,*},
\end{equation}
on $X_{p\text{-reg}}$ and $T$ is given by matrix multiplication by $\widetilde T$, i.e.,
$T\varphi = \widetilde T (\varphi_1,\ldots,\varphi_\nu)^t$.
\end{lemma}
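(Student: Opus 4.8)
The plan is to prove the three assertions in the order stated: derive the $\E_Z^{0,*}$- and $\hol_Z$-linearity from the projection formula \eqref{projformel}, establish the representation \eqref{klamp} by a Taylor expansion argument, and then obtain the matrix identity as a formal consequence. Injectivity of $T$ and $\widetilde T$ is already recorded before the statement (injectivity of \eqref{map1} composed with \eqref{mapny}), so nothing new is needed there. For linearity, take $\psi\in\E_Z^{0,*}$ and $\varphi\in\E_X^{p,*}$; the module structure is $\psi\wedge\varphi=\pi^*\psi\wedge\varphi$, so by \eqref{projformel},
\begin{equation*}
T(\psi\wedge\varphi)=\big(\pi_*(\pi^*\psi\wedge\varphi\wedge w^\alpha\mu_j)\big)_{j,\alpha}=\big(\psi\wedge\pi_*(\varphi\wedge w^\alpha\mu_j)\big)_{j,\alpha}=\psi\wedge T\varphi,
\end{equation*}
which is the $\E_Z^{0,*}$-linearity of $T$. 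Since the identification $(\hol_Z)^\nu\simeq\Om_X^p$ is $\hol_Z$-linear and the $\hol_Z$-action on $\Om_X^p$ is the restriction of this $\E_Z^{0,*}$-action (for $g\in\hol_Z$ one has $\pi^*g=\tilde g$), the same computation gives the $\hol_Z$-linearity of $\widetilde T$ as defined in \eqref{mapTsnok}.

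The heart of the matter is \eqref{klamp}, and the key is to isolate three types of terms that automatically lie in $\Kernel_p i^*$. Since $\mu_1,\dots,\mu_m$ generate $\Hom_{\hol_X}(\Om_X^p,\CH_D^Z)$, a form $\varphi\in\E_D^{p,*}$ lies in $\Kernel_p i^*$ precisely when $\varphi\wedge\mu_j=0$ for all $j$; cf.\ the injectivity of \eqref{map1}. Each $\mu_j$ is a Coleff--Herrera current supported on $Z=\{w=0\}$, so $\bar w_\ell\mu_j=0$ and, by \eqref{slurp}, $d\bar w_\ell\wedge\mu_j=0$, while $w^\alpha\mu_j=0$ for $|\alpha|\ge M$ by the choice of $M$. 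Hence for every $\varphi\in\E_D^{p,*}$ the forms $\bar w_\ell\varphi$, $d\bar w_\ell\wedge\varphi$, and $w^\alpha\varphi$ (for $|\alpha|\ge M$) all lie in $\Kernel_p i^*$. I would then take a representative $\tilde\varphi=\sum_{|\beta|+|\gamma|=p}a_{\beta\gamma}\,dz^\beta\wedge dw^\gamma$ with $a_{\beta\gamma}\in\E_D^{0,*}$, discard all $d\bar w$-components of the $a_{\beta\gamma}$, and Taylor expand the surviving coefficients jointly in $(w,\bar w)$ to order $M$ with a Hadamard remainder. By the three facts just listed, every monomial carrying a factor $\bar w^{\alpha'}$ with $\alpha'\neq 0$, every term with a $d\bar w$, and every remainder term carrying $w^\alpha$ with $|\alpha|\ge M$ drops out modulo $\Kernel_p i^*$, leaving
\begin{equation*}
\tilde\varphi\equiv\sum_{\beta,\gamma}\sum_{|\alpha|<M}c_{\beta\gamma\alpha}\wedge\big(w^\alpha\,dz^\beta\wedge dw^\gamma\big)\pmod{\Kernel_p i^*},
\end{equation*}
where the coefficients $c_{\beta\gamma\alpha}\in\E_Z^{0,*}$ (absorbing the $d\bar z$-factors) depend only on the coordinates of $Z$.

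On $\Xpreg$ the forms $w^\alpha\,dz^\beta\wedge dw^\gamma$, $|\alpha|<M$, are exactly the generators \eqref{snub} of $\Om_X^p$ over $\hol_Z$, so each equals $\sum_k g_k b_k$ modulo $\J^p$ for suitable $g_k\in\hol_Z$; since $\J^p\subset\Kernel_p i^*$ (Corollary~\ref{gurka}) and the two module structures agree, this congruence persists modulo $\Kernel_p i^*$ in $\E_X^{p,*}$. Substituting and collecting the coefficient of each $b_k$ produces \eqref{klamp} with $\varphi_k\in\E_Z^{0,*}$. The matrix identity is then immediate: by \eqref{klamp} and the $\E_Z^{0,*}$-linearity of $T$,
\begin{equation*}
T\varphi=\sum_{k=1}^\nu\varphi_k\wedge T(b_k),
\end{equation*}
and since the restriction of $T$ to $\Om_X^p$ is the map $\widetilde T$, the current $T(b_k)$ is precisely the $k$-th column of the matrix $\widetilde T$, so $T\varphi=\widetilde T(\varphi_1,\dots,\varphi_\nu)^t$. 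I expect the only genuinely delicate step to be the bookkeeping in the Taylor reduction: keeping the $d\bar w$-, $\bar w$-, and high-$w$-power contributions cleanly separated and verifying that the surviving coefficients depend on the $Z$-coordinates alone; everything else is formal once the three kernel facts are in place.
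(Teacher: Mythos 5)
Your proposal is correct and follows essentially the same route as the paper's proof: linearity via the projection formula \eqref{projformel}, the representation \eqref{klamp} by splitting off the $d\bar w$-terms and Taylor expanding in $(w,\bar w)$ to order $M$ so that the $\bar w$-, $d\bar w$-, and high-$w$-power contributions are absorbed into $\Kernel_p i^*$ (via \eqref{slurp} and \eqref{koppla}), and then rewriting the surviving $w^\alpha dz^\beta\wedge dw^\gamma$ as $\hol_Z$-combinations of the $b_k$ modulo $\J^p\subset\Kernel_p i^*$. The only cosmetic difference is that the paper decomposes the representative as $\sum_i\tilde\varphi_i'\wedge\tilde\varphi_i''$ with $\tilde\varphi_i''\in\Om_D^p$ rather than in the monomial basis $dz^\beta\wedge dw^\gamma$, which is equivalent.
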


\begin{proof}
Let $\psi\in\E_Z^{0,*}$. By definition of $T$ and \eqref{projformel},
\begin{eqnarray*}
T(\psi\wedge\varphi) &=& T(\pi^*\psi\wedge\varphi) 
= \big(\pi_*(\pi^*\psi\wedge\varphi \wedge w^\alpha \mu_j)\big)_{|\alpha|<M, j=1,\ldots,m} \\
&=& \psi\wedge \big(\pi_*(\varphi \wedge w^\alpha \mu_j)\big)_{|\alpha|<M, j=1,\ldots,m}.
\end{eqnarray*} 
Hence, $T$ is $\E_Z^{0,*}$-linear. The same computation shows that $\widetilde T$ is $\hol_Z$-linear
and therefore given by a matrix with elements in $\Om_Z^n$.
Explicitly, since any $\varphi\in\Om_X^p$ can be written
$\varphi=\sum_k\varphi_kb_k + \J^p$ for (unique) $\varphi_k\in\hol_Z$,
\begin{equation}\label{TT}
\widetilde T=
\begin{bmatrix}
\pi_*(w^{\alpha_1}b_1\wedge \mu_1) & \dots & \pi_*(w^{\alpha_1}b_\nu\wedge \mu_1) \\
\vdots & \ddots & \vdots \\
\pi_*(w^{\alpha_{\tilde M}}b_1\wedge \mu_m) & \dots & \pi_*(w^{\alpha_{\tilde M}}b_\nu\wedge \mu_m)
\end{bmatrix}
.
\end{equation}
 
Let $\varphi\in\E_X^{p,*}$ and let $\tilde\varphi\in\E_D^{p,*}$ be any representative.
We can write $\tilde\varphi=\sum_i\tilde\varphi'_i\wedge\tilde\varphi''_i$, where $\tilde\varphi'_i\in\E_D^{0,*}$ and 
$\tilde\varphi''_i\in\Om_D^p$.
Moreover, we write $\tilde\varphi'_i=\phi_i+\psi_i$, where every term of $\phi_i$ contains a factor $d\bar{w}_j$ for some $j$
and no term of $\psi_i$ contains such a factor.
Taylor expanding (the coefficients of) $\psi_i$ with respect to $w$ and $\bar w$ to the order $M$ we get
\begin{equation*}
\psi_i(z,w) = \sum_{|\alpha|<M}\frac{\partial^\alpha \psi_i}{\partial w^\alpha}(z,0) \frac{w^\alpha}{\alpha !}
+\sum_{|\alpha|=M} w^\alpha\tilde\psi_{i,\alpha}
+\mathcal{O}(\bar w),
\end{equation*} 
where $\tilde\psi_{i,\alpha}\in\E_D^{0,*}$ and $\mathcal{O}(\bar w)$ is a sum of terms divisible by some $\bar w_j$.
In view of \eqref{koppla} and \eqref{slurp}, $\phi_i$, $w^\alpha\tilde\psi_{i,\alpha}$, and $\mathcal{O}(\bar w)$ are in 
$\Kernel_p i^*$. Hence,
\begin{equation}\label{kyl}
\tilde\varphi=\sum_{i, |\alpha|<M}
\frac{\partial^\alpha \psi_i}{\partial w^\alpha}(z,0) \frac{w^\alpha}{\alpha !} \wedge\tilde\varphi''_i
+\Kernel_p i^*.
\end{equation}
Since $w^\alpha \tilde\varphi''_i\in\Om_D^p$ there are $\tilde\varphi_{\alpha,i,k}\in\hol_Z$ such that
$w^\alpha \tilde\varphi''_i=\sum_k\tilde\varphi_{\alpha,i,k}(z)b_k + \J^p$, and so \eqref{klamp} follows from
\eqref{kyl}.
By $\E_Z^{0,*}$-linearity, 
\begin{equation*}
T(\varphi_k\wedge b_k)= \varphi_k\wedge T|_{\Om_X^p}b_k=\varphi_k\widetilde T (0,\ldots,1_k,\ldots,0)^t
\end{equation*}
and the last statement of the 
lemma follows.
\end{proof}

Notice that by this lemma, $T$ is a map $\E_X^{p,*}\to (\E_Z^{n,*})^{m\widetilde M}$ on $X_{p\text{-reg}}$.

\begin{proposition}\label{rutschkana}
On $X_{p\text{-reg}}$, $\E_X^{p,*}$ is a free $\E_Z^{0,*}$-module, the representation \eqref{klamp} 
of an element $\varphi\in \E_X^{p,*}$ is unique,
and 
\begin{equation*}
\E_X^{p,*}=\E_D^{p,*}/\big(\J\E_D^{p,*} + d\J\wedge \E_D^{p-1,*} + 
\overline{\J_Z}\E_D^{p,*} + d\overline{\J_Z}\wedge\E_D^{p,*}\big),
\end{equation*}
where $\J_Z=\sqrt{\J}$.
\end{proposition}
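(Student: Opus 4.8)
The plan is to prove the three assertions of Proposition~\ref{rutschkana} in order, using the machinery of Lemma~\ref{smirnoff} as the backbone. The key tool is the injective, $\E_Z^{0,*}$-linear map $T\colon \E_X^{p,*}\to (\E_Z^{n,*})^{m\widetilde M}$ which, on $\Xpreg$, is given by matrix multiplication by $\widetilde T$. Since on $\Xpreg$ the module $\Om_X^p$ is a free $\hol_Z$-module with basis $\{b_k\}$, the matrix $\widetilde T$ in \eqref{TT} represents the isomorphism $(\hol_Z)^\nu\xrightarrow{\sim}\Om_X^p\hookrightarrow (\Om_Z^n)^{m\widetilde M}$, so $\widetilde T$ has full column rank $\nu$ over the field of meromorphic functions on $Z$. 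First I would establish \emph{uniqueness} of the representation \eqref{klamp}: if $\sum_k\varphi_k\wedge b_k\in\Kernel_p i^*$ with $\varphi_k\in\E_Z^{0,*}$, then applying $T$ and using $\E_Z^{0,*}$-linearity gives $\widetilde T(\varphi_1,\dots,\varphi_\nu)^t=0$; since the entries of $\widetilde T$ are holomorphic $n$-forms and the $\varphi_k$ are smooth $(0,*)$-forms, injectivity of $\widetilde T$ as an $\hol_Z$-linear map forces each $\varphi_k=0$. This simultaneously shows the representation is unique and that $\E_X^{p,*}$ is a \emph{free} $\E_Z^{0,*}$-module with the classes of $b_k$ as a basis (existence of the representation being Lemma~\ref{smirnoff}).

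For the explicit description of $\E_X^{p,*}$ as a quotient, the strategy is to identify $\Kernel_p i^*$ on $\Xpreg$ with the submodule
\begin{equation*}
\mathcal{K}:=\J\E_D^{p,*} + d\J\wedge \E_D^{p-1,*} + \overline{\J_Z}\E_D^{p,*} + d\overline{\J_Z}\wedge\E_D^{p,*}.
\end{equation*}
The inclusion $\mathcal{K}\subset\Kernel_p i^*$ is the routine direction: for $\mu\in\Hom_{\hol_X}(\Om_X^p,\CH_D^Z)$ we have, by \eqref{koppla}, that $\J\mu=d\J\wedge\mu=0$, and by \eqref{slurp} that $\overline{\J_Z}\mu=d\overline{\J_Z}\wedge\mu=0$; pairing each of the four generator types against $\mu$ and moving the holomorphic/antiholomorphic factors onto $\mu$ shows the wedge product vanishes, so each generator lies in $\Kernel_p i^*$. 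The reverse inclusion $\Kernel_p i^*\subset\mathcal{K}$ is where the real content lies, and here I would reuse the Taylor-expansion argument already carried out in the proof of Lemma~\ref{smirnoff}. Given $\varphi\in\Kernel_p i^*$ with representative $\tilde\varphi\in\E_D^{p,*}$, write $\tilde\varphi=\sum_i\tilde\varphi_i'\wedge\tilde\varphi_i''$ and split $\tilde\varphi_i'=\phi_i+\psi_i$ exactly as in that proof; the terms $\phi_i$ (containing a $d\bar w_j$), the high-order terms $w^\alpha\tilde\psi_{i,\alpha}$ with $|\alpha|=M$, and the $\mathcal{O}(\bar w)$ remainder all manifestly lie in $\mathcal{K}$ since $w^\alpha\in\J$ for $|\alpha|\geq M$ by the Nullstellensatz and $\overline{w}_j\in\overline{\J_Z}$, and modulo $\mathcal{K}$ we are left with the principal part $\sum_{i,|\alpha|<M}(\partial^\alpha\psi_i/\partial w^\alpha)(z,0)(w^\alpha/\alpha!)\wedge\tilde\varphi_i''$ as in \eqref{kyl}.

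It then remains to show this principal part lies in $\mathcal{K}$ as well, given that $\varphi\in\Kernel_p i^*$. The principal part is, up to $\mathcal{K}$, of the form $\sum_k \varphi_k(z)\wedge b_k$ with $\varphi_k\in\E_Z^{0,*}$ (pulled back via $\pi^*$), so by the already-proved uniqueness together with $\varphi\in\Kernel_p i^*$ (i.e.\ $T\varphi=0$, hence $\widetilde T(\varphi_1,\dots,\varphi_\nu)^t=0$, hence all $\varphi_k=0$ by injectivity of $\widetilde T$), the principal part itself lies in $\Kernel_p i^*$; but since it is already reduced to $\sum_k\varphi_k\wedge b_k$ modulo $\mathcal{K}$ with all $\varphi_k=0$, it lies in $\mathcal{K}$. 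Thus $\tilde\varphi\in\mathcal{K}$ and the quotient description follows. The main obstacle I anticipate is bookkeeping in the reverse inclusion: one must be careful that the $b_k$ chosen as an $\hol_Z$-basis of $\Om_X^p$ indeed generate the relevant reductions, and that the decomposition into $\phi_i$, high-order, and $\mathcal{O}(\bar w)$ pieces genuinely lands in the four stated generator types without leftover cross-terms involving mixed holomorphic and antiholomorphic differentials of $\J_Z$; verifying that $d\overline{\J_Z}\wedge\E_D^{p,*}$ absorbs the $d\bar w_j$-containing contributions is the delicate point, but it follows from $\overline{w}_j\in\overline{\J_Z}$ and the Leibniz rule applied to the smooth coefficients.
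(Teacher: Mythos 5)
Your proposal is correct and follows essentially the same route as the paper: uniqueness and freeness via the injectivity (generic pointwise injectivity, in the paper's phrasing) of the holomorphic matrix $\widetilde T$ applied to the smooth coefficients, and the identification of $\Kernel_p i^*$ with the stated submodule via the Taylor-expansion reduction from Lemma~\ref{smirnoff} followed by $T\varphi=0$. The one detail worth making explicit in your reverse inclusion is that the error term $\J^p\wedge\E_Z^{0,*}$ arising when you rewrite $w^\alpha\varphi'$ in terms of the basis $\{b_k\}$ is absorbed into $\mathcal{K}$ precisely because $\hat\J^p=\J^p$ on $\Xpreg$.
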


\begin{proof}
Notice first that since $\widetilde T$ is injective and $\Om_X^p$ is a free $\hol_Z$-module on $X_{p\text{-reg}}$ it follows that,
generically on $X_{p\text{-reg}}$, $\widetilde T$ is a pointwise injective matrix (times $dz_1\wedge\cdots\wedge dz_n$).
Consider a representation \eqref{klamp} and assume that $\sum_k\varphi_k\wedge b_k\in \Kernel_p i^*$. 
Then $\widetilde T (\varphi_1,\ldots,\varphi_\nu)^t=0$. Since $\widetilde T$ is generically pointwise injective on $X_{p\text{-reg}}$
it follows that $\varphi_j=0$, $j=1,\ldots,\nu$, on $X_{p\text{-reg}}$. Hence, the representation \eqref{klamp} is unique and 
$\E_X^{p,*}$ is a free $\E_X^{0,*}$-module. 

It remains to see that
\begin{equation}\label{ruskigare}
\Kernel_p i^* = \J\E_D^{p,*} + d\J\wedge \E_D^{p-1,*} + 
\overline{\J_Z}\E_D^{p,*} + d\overline{\J_Z}\wedge\E_D^{p,*}
\end{equation}
on $X_{p\text{-reg}}$. Let $\mu\in\Hom_{\hol_X}(\Om_X^p,\CH_D^{Z})$ and assume that $\varphi$ is an element of the right-hand side
of \eqref{ruskigare}.
In view of \eqref{slurp}, the terms of $\varphi$ that belong to $\overline{\J_Z}\E_D^{p,*} + d\overline{\J_Z}\wedge\E_D^{p,*}$
annihilate $\mu$ so we may assume that $\varphi\in \J\E_D^{p,*} + d\J\wedge \E_D^{p-1,*}=\hat{\J}^p\wedge\E_D^{0,*}$.
Write $\varphi$ as (a sum of terms) $\varphi'\wedge\varphi''$, where $\varphi'\in\hat\J^p$ and $\varphi''\in\E_D^{0,*}$.
Since $\J^p\supset \hat\J^p$ we have $\varphi'\wedge\mu=0$. Thus $\varphi\wedge\mu=0$ and so $\varphi\in\Kernel_p i^*$.

Assume that $\varphi\in\Kernel_p i^*$ and write $\varphi$ as (a sum of terms) $\varphi'\wedge\varphi''$, where
$\varphi'\in\Om_D^p$ and $\varphi''\in\E_D^{0,*}$. As in the proof of Lemma~\ref{smirnoff}, 
by a Taylor expansion of (the coefficients of) $\varphi''$
with respect to $w$ up to order $M$, we have
\begin{equation}\label{rusk}
\varphi(z,w)=
\varphi'\wedge\sum_{|\alpha|<M}\frac{\partial^\alpha \varphi''}{\partial w^\alpha}(z,0) \frac{w^\alpha}{\alpha !}
+\varphi'\wedge\sum_{|\alpha|=M} w^\alpha\tilde{\varphi}'_{\alpha}
+\mathcal{O}(\bar w, d\bar w),
\end{equation}
where $\tilde{\varphi}'_{\alpha}\in\E_D^{0,*}$ and $\mathcal{O}(\bar w, d\bar w)$ is a sum of smooth terms 
containing either some $\bar{w}_j$ or $d\bar{w}_j$. The second and the last term on the right-hand side of \eqref{rusk} belong to the 
right-hand side of \eqref{ruskigare}. As in the proof of Lemma~\ref{smirnoff} again, this time by writing $w^\alpha\varphi'\in\Om_D^p$
modulo $\J^p$ as a $\hol_Z$-combination of the $b_k$ on $X_{p\text{-reg}}$,
\begin{equation}\label{ruskk}
\varphi'\wedge\sum_{|\alpha|<M}\frac{\partial^\alpha \varphi''}{\partial w^\alpha}(z,0) \frac{w^\alpha}{\alpha !}
=\sum_{k=1}^\nu \phi_k\wedge b_k + \J^p\wedge\E_Z^{0,*},
\end{equation}
where $\phi_k\in \E_Z^{0,*}$. Since $\hat\J^p=\J^p$ on $X_{p\text{-reg}}$ the last term on the right-hand side belongs to
the right-hand side of \eqref{ruskigare}. The sum $S$ in the right-hand side of \eqref{ruskk} is in $\Kernel_p i^*$ since,
by the proof so far, $\varphi$ and $\varphi-S$ are in $\Kernel_p i^*$. In view of Lemma~\ref{smirnoff}, thus
$\widetilde T (\phi_1,\ldots,\phi_\nu)^t=T S=0$. Since $\widetilde T$ is generically pointwise injective on $X_{p\text{-reg}}$,
$\phi_j=0$ on $X_{p\text{-reg}}$. Hence, the left-hand side of \eqref{ruskk} belongs to the right-hand side of \eqref{ruskigare}.
Thus, all terms in the right-hand side of \eqref{rusk} do too, and so \eqref{ruskigare} follows.
\end{proof}

\subsection{Currents and structure forms on $X$.}
The space of $(n-p,n-q)$-currents on $X$ is the dual of the space of compactly supported sections of $\E_X^{p,q}$,
cf.\ \cite[Section~4.2]{HL}.
The topology on $\E_X^{p,*}=\E_D^{p,*}/\Kernel_p i^*$ is the quotient topology. Notice that
$\Kernel_p i^*$ is a closed subspace of $\E_D^{p,*}$ since it is defined as the annihilator of currents.
It follows that the $(n-p,n-q)$-currents on $X$ can be identified with the $(N-p,N-q)$-currents $\mu$
in $D$ such that $\mu . \varphi=0$ for all $\varphi\in\Kernel_p i^*$ with compact support. 
This holds if and only if $\varphi\wedge\mu=0$ for all $\varphi\in\Kernel_p i^*$ since $\Kernel_p i^*$ is both a right and 
left $\E_D^{0,*}$-submodule of $\E_D^{p,*}$. If $\tau$ is an
$(n-p,n-q)$-current on $X$ we write $i_*\tau$ for the corresponding $(N-p,N-q)$-current in $D$. 
Notice that if $\varphi\in\E_D^{p,*}$, then $\varphi\wedge i_*\tau$ only depends on $i^*\varphi$ and we write
\begin{equation*}
\varphi\wedge i_*\tau=i_*(i^*\varphi\wedge\tau).
\end{equation*}
Since $\debar$ is well-defined on  $\E_X^{p,*}$, $\debar$ is defined on $(n-p,*)$-currents $\tau$ on $X$ by 
$\debar\tau.\varphi=\pm\tau.\debar\mu$ and we have $\debar i_*\tau=i_*\debar\tau$.

\begin{definition}\label{cool}
If $\tau$ is an $(n,n)$-current on $X$ with compact support we let
\begin{equation*}
\int_X\tau:= \tau. i^*1.
\end{equation*}
\end{definition}

Notice that $i^*1$ is a well-defined element in $\E_X^{0,0}$ independent of the local embedding $i\colon X\to D$.
Hence, Definition~\ref{cool} makes sense on any pure-dimensional $X$, not just embedded ones.

Let $\mu\in\Hom_{\hol_D}(\Om_D^p,\W_D^{Z,*})$, cf.\ Remark~\ref{gnabbigare}, and assume that $\J^p\wedge\mu=0$. 
Since $\J^p\supset \hat\J^p$ we have $\hat\J^p\wedge\mu=0$ and so, in view of \eqref{slurp} and \eqref{klamp}, if $\varphi\in\Kernel_p i^*$
we get $\varphi\wedge\mu=0$ on $X_{p\text{-reg}}$. Thus, by the SEP, $\varphi\wedge\mu=0$.
Hence, $\mu=i_*\mu'$ for some $(n-p,*)$-current $\mu'$ on $X$.

\begin{definition}\label{WXdef}
The subsheaf $\W_X^{n-p,*}$ of the sheaf of currents on $X$ is defined by
\begin{equation}\label{ljus}
i_*\W_X^{n-p,*} = \{\mu\in\Hom_{\hol_D}(\Om_D^p,\W_D^{Z,*});\, \J\mu=d\J\wedge\mu=0\}.
\end{equation}
\end{definition}

Notice that, since $\hat\J^p=\J^p$ on $X_{p\text{-reg}}$ and currents in $\W_D^Z$ have the SEP with
respect to $Z$, we have,
cf.\ \eqref{koppla},
\begin{equation*}
i_*\W_X^{n-p,*} = \{\mu\in\Hom_{\hol_D}(\Om_D^p,\W_D^{Z,*});\, \J^p\wedge\mu=0\}.
\end{equation*}

Recall that the current $R$ associated with $\Om_X^p$ has the SEP with respect to $Z$
and that $\J^p\wedge R=0$. By \eqref{nyttnr}, $\mathcal{R}$ has the same properties. 
Therefore, 
there is $\omega\in\W_X^{n-p,*}$ such that
\begin{equation}\label{strukturform}
\mathcal{R}=i_*\omega.
\end{equation}
We say that $\omega$ is an $(n-p)$-\emph{structure form} on $X$.

\begin{definition}\label{barlet}
We let $\Ba_X^{n-p}=\{\tau\in\W_X^{n-p,0};\, \debar\tau=0\}$.
\end{definition}

By Definition~\ref{WXdef}, Remark~\ref{gnabbigare}, \eqref{CHdef}, and \eqref{koppla}, we have
\begin{equation}\label{barlet-bjork}
i_*\Ba_X^{n-p}=
\Hom_{\hol_X}(\Om_X^p,\CH_D^{Z}), 
\end{equation}
cf.\ \eqref{kopp}.

\begin{proposition}\label{absolut}
There is a tuple $\omega_0=(\omega_{01},\ldots,\omega_{0r})$, where $\omega_{0i}\in\Ba_X^{n-p}$, and a tuple 
$a_0=(a_{01},\ldots,a_{0r})$ of $E_\kappa$-valued almost semi-meromorphic $(0,0)$-currents $a_{0i}$ in $D$ such that
$a_0$ is 
smooth outside $Z^p_{\kappa+1}:=Z^{\Om_X^p}_{\kappa+1}$ and 
\begin{equation}\label{trumma}
\mathcal{R}_{\kappa}=a_0\cdot i_*\omega_0.
\end{equation}
Moreover, for $j=1,2,\ldots,n$, there are $\text{Hom}(E_{j-1},E_j)$-valued almost semi-meromorphic $(0,j)$-currents $a_j$ in $D$, smooth outside 
$Z^p_{\kappa+j}:=Z^{\Om_X^p}_{\kappa+j}$, such that
\begin{equation}\label{pinne}
\mathcal{R}_{\kappa+j}=a_j \mathcal{R}_{\kappa+j-1},
\end{equation}
where the product is defined as in \eqref{ASMprod}.
\end{proposition}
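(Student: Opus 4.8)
The plan is to construct the almost semi-meromorphic factors $a_j$ by working on a Hermitian resolution of $\Om_X^p$ and exploiting the structure of the residue current $\mathcal{R}$ recalled in Section~\ref{prelim}. First I would recall that $R=R_\kappa+R_{\kappa+1}+\cdots$ is built from the Moore--Penrose inverses $\sigma_j$ of the morphisms $f_j$, and that each $\sigma_j$ is almost semi-meromorphic, smooth outside $Z$, and more precisely (by the Buchsbaum--Eisenbud description \eqref{pinne2} of the singularity subvarieties) smooth outside the locus where $f_j$ drops rank. The key structural relation is \eqref{tangent}, which upon expanding the geometric series $u=\sigma+\sigma\debar\sigma+\cdots$ exhibits $R_{\kappa+j}$ as, morally, $\debar\sigma_{\kappa+j}$ applied to $R_{\kappa+j-1}$; one then reads off a candidate $a_j$ involving $\sigma$ and a smooth factor. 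I expect each $a_j$ to be a $\text{Hom}(E_{j-1},E_j)$-valued almost semi-meromorphic $(0,j)$-current whose singular support lies in $Z^p_{\kappa+j}$, matching the codimension estimates $\text{codim}_D Z_j^\F\ge j$ and the purity condition $\text{codim}_D Z_j^\F\ge j+1$ that holds because $\Om_X^p$ has pure codimension $\kappa$.

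For the factorization \eqref{pinne} I would argue as follows. Off $Z^p_{\kappa+j}$ the morphism $f_{\kappa+j}$ has optimal rank, so $\sigma_{\kappa+j}$ is smooth there, and the product $a_j\mathcal{R}_{\kappa+j-1}$ is a genuine product of a smooth form with a pseudomeromorphic current, hence well-defined. One checks \eqref{pinne} holds generically on $Z^p_{\kappa+j-1}$ by a direct computation using $fu+uf-\debar f=I_E$. To pass from a generic identity to a global one, I would use that both sides lie in $\W_Z$ and have the SEP with respect to $Z$: since $\mathcal{R}$ has the SEP by Lemma~\ref{nissen}, and the almost semi-meromorphic product in \eqref{ASMprod} again lies in $\W_Z$, two currents in $\W_Z$ that agree outside a proper analytic subset (here $Z^p_{\kappa+j}$, which has codimension $\ge j+1$ in $D$, so codimension $\ge j$ in $Z$) must coincide everywhere — this is precisely the content of the dimension principle together with the SEP. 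The identity \eqref{trumma} is the base case: here $\mathcal{R}_\kappa$ is $\debar$-closed off $Z^p_{\kappa+1}$ and, being a Coleff--Herrera-type current there, decomposes along a basis $\omega_{0i}$ of $\Ba_X^{n-p}$-valued structure forms with almost semi-meromorphic coefficients $a_{0i}$, smooth outside $Z^p_{\kappa+1}$ by the same rank argument applied to $f_\kappa$ and $f_{\kappa+1}$.

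The construction of the tuple $\omega_0$ in \eqref{trumma} deserves separate care. The components $\mathcal{R}_\kappa$ restricted to a neighborhood of a generic point decompose, via the local representation \eqref{grymta} and the fact (noted after \eqref{CHdef}) that the coefficients $\mu_\alpha(z)$ of a $\Hom_{\hol_D}(\Om_D^p,\CH_D^Z)$-element lie in $\Om_Z^{n-p}$, into structure forms in $\Ba_X^{n-p}=\{\tau\in\W_X^{n-p,0};\,\debar\tau=0\}$; one extracts finitely many of these as $\omega_{01},\ldots,\omega_{0r}$ and writes the smooth-off-$Z^p_{\kappa+1}$ coefficients as the entries of $a_0$, which are almost semi-meromorphic because they are built from the $\sigma_j$.

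The main obstacle, I expect, is the passage from the generic factorizations to the global almost semi-meromorphic identities \eqref{trumma} and \eqref{pinne}: one must verify that the products $a_j\mathcal{R}_{\kappa+j-1}$, defined by the regularization \eqref{ASMprod}, genuinely extend across $Z^p_{\kappa+j}$ and reproduce $\mathcal{R}_{\kappa+j}$ there, rather than picking up spurious residue contributions supported on the singularity subvarieties. This is controlled by the purity of $\Om_X^p$ (which forces $\text{codim}_D Z^p_{\kappa+j}\ge \kappa+j+1$ for $j\ge 1$) combined with the dimension principle, but making the bookkeeping precise — keeping track of which $\sigma_j$ enter each $a_j$ and confirming the singular-support bound $Z^p_{\kappa+j}$ — is where the real work lies.
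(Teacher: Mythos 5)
Your treatment of \eqref{pinne} is essentially the paper's argument: one gets smooth $(0,j)$-forms $a_j$ with $R_{\kappa+j}=a_jR_{\kappa+j-1}$ outside $Z^p_{\kappa+j}$ (this is \cite[Theorem~4.4]{AW1}), extends them almost semi-meromorphically across $Z^p_{\kappa+j}$ as in \cite[Proposition~3.3]{AS}, and uses the purity of $\Om_X^p$ together with the SEP to see that the identity persists in all of $D$. That part of the proposal is fine in outline.

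The construction of the tuple $\omega_0$ in \eqref{trumma}, however, has a genuine gap. You propose to ``extract'' the $\omega_{0i}$ from local decompositions of $\mathcal{R}_\kappa$ near generic points, using \eqref{grymta} and the fact that $\mathcal{R}_\kappa$ is $\debar$-closed off $Z^p_{\kappa+1}$. But elements of $\Ba_X^{n-p}$ must be globally defined and $\debar$-closed \emph{everywhere}, and $\mathcal{R}_\kappa$ is not: by Lemma~\ref{nissen}, $\debar\mathcal{R}_\kappa=f_{\kappa+1}\mathcal{R}_{\kappa+1}$, which is nonzero on $Z^p_{\kappa+1}$ in general. A collection of Coleff--Herrera-type pieces defined only near generic points, or only on $D\setminus Z^p_{\kappa+1}$, need not patch to sections of $\Ba_X^{n-p}$, and nothing in your argument forces $\debar$-closedness across $Z^p_{\kappa+1}$. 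The paper's key device is to choose a holomorphic morphism $g\colon\hol(E_\kappa)\to\hol(F)$ with $\Image g^*=\Kernel f^*_{\kappa+1}$, so that $gf_{\kappa+1}=0$ identically; then $g\mathcal{R}_\kappa$ is globally defined, satisfies $\debar(g\mathcal{R}_\kappa)=gf_{\kappa+1}\mathcal{R}_{\kappa+1}=0$ everywhere, is annihilated by $\J^p$, and hence its components are honest sections $\omega_{0i}$ of $\Ba_X^{n-p}$. The coefficient tuple $a_0$ is then the almost semi-meromorphic Moore--Penrose inverse of $g$ (not of the $f_j$), smooth outside $Z^p_{\kappa+1}$, and one needs the nontrivial identity $R_\kappa=a_0gR_\kappa$ (from \cite[Proposition~3.3]{AS}, \cite[Proposition~3.2]{Sz}) to recover $\mathcal{R}_\kappa=a_0\cdot i_*\omega_0$. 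Without introducing such a $g$ (or an equivalent mechanism for killing the image of $f_{\kappa+1}$ globally), the base case of your induction does not go through.
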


\begin{proof}
Since $\Kernel f^*_{\kappa+1}\subset \hol(E^*_\kappa)$ is coherent, in particular finitly generated,
there is a trivial vector bundle $F\to D$ and a morphism $g\colon \hol(E_\kappa) \to \hol(F)$
such that the image of the transpose $g^*\colon \hol(F^*)\to \hol(E^*_\kappa)$ equals $\Kernel f^*_{\kappa+1}$.
Notice that $g f_{\kappa+1}=0$ since $f^*_{\kappa+1} g^*=0$.
As in the proofs of \cite[Proposition~3.3]{AS} and \cite[Proposition~3.2]{Sz}, 
the pointwise minimal (with respect to some choice of metric) inverse, $a_0$, of
$g$ is smooth outside $Z^p_{\kappa+1}$, has an almost semi-meromorphic extension across $Z^p_{\kappa+1}$, and $R_\kappa=a_0gR_\kappa$.
Hence,
\begin{equation}\label{trummor}
\mathcal{R}_\kappa=R_\kappa\otimes d\zeta=a_0 g \mathcal{R}_\kappa.
\end{equation} 
In view of Lemma~\ref{nissen} we have
\begin{equation*}
\debar g \mathcal{R}_\kappa = g f_{\kappa+1} \mathcal{R}_\kappa=0,
\end{equation*}
$g\mathcal{R}_\kappa$ is an $F$-valued section of $\Hom_{\hol_D}(\Om_D^p,\W_D^{Z,*})$, and $\J^p\wedge g\mathcal{R}_\kappa =0$. 
Thus, after a choice of frame of $F$, 
we can identify $g\mathcal{R}_\kappa$ with a tuple $\omega_0$ of sections of $\Ba_X^{n-p}$, i.e.,
$g \mathcal{R}_\kappa=i_*\omega_0$. By the choice of frame of $F$, $a_0$ is a tuple of $E_\kappa$-valued 
almost semi-meromorphic currents. Hence, \eqref{trumma} follows from \eqref{trummor}. 

By \cite[Theorem~4.4]{AW1}, in $D\setminus Z^p_{\kappa+j}$ there are smooth $(0,j)$-forms $a_j$ such that
$R_{\kappa+j}=a_j R_{\kappa+j-1}$.
As in the proof of \cite[Proposition~3.3]{AS} the $a_j$ have almost semi-meromorphic extensions (also denoted $a_j$)
across $Z^p_{\kappa+j}$ and $R_{\kappa+j}=a_j R_{\kappa+j-1}$ 
holds in $D$; here $a_jR_{\kappa+j-1}$ is defined as in \eqref{ASMprod}, and we remark that for this last identity 
to hold in $D$ it 
is necessary that $\Om_X^p$ has pure dimension. Thus, 
\eqref{pinne} follows.
\end{proof}

\section{The sheaf $\mathcal{V}_X^{p,*}$.}\label{Vsektion}
The sheaf $\mathcal{V}_X^{p,*}$ is intrinsic on $X$ and an extension of $\E_X^{p,*}$.
In terms of our local embedding $i\colon X\to D$ the idea is as follows. Recall that $Z=X_{\text{red}}$ and that $\Om_X^p$ locally on 
$\Xpreg\subset Z_{\text{reg}}$
is a free $\hol_Z$-module, where the module structure depends on a choice of local coordinates. 
As in Section~\ref{smooth-reg} we let $\{b_k\}_{k=1}^\nu$ be a local $\hol_Z$-basis of $\Om_X^p$.
By Lemma~\ref{smirnoff}, each $\varphi\in\E_X^{p,*}$ has a representative $\sum_k\varphi_k \wedge b_k$
on $X_{p\text{-reg}}$,
where $\varphi\in\E_Z^{0,*}$. One can define $\mathcal{V}_X^{p,*}$ on $\Xpreg$ as such sums with $\varphi_k\in\W_Z^{0,*}$ instead of 
$\E_X^{0,*}$ and require $\varphi_k$ to transform under changes of coordinates and basis $\{b_k\}$ as in the case of $\E_X^{p,*}$.
However, we choose a more invariant approach. To motivate it we notice that each sum  $\sum_k\varphi_k \wedge b_k$ with $\varphi_k\in\W_Z^{0,*}$
induces an $\hol_X$-linear mapping  $\Ba_X^{n-p}\to\W_X^{n,*}$ as follows.

Let $\mu\in\Ba_X^{n-p}$. Then $b_k\wedge i_*\mu$ is in $\CH_D^{Z}$ and depends only 
on the class of $b_k$ in $\Om_X^p$.
Moreover, $\J b_k\wedge i_*\mu=0$. In view of the representation \eqref{grymta} of $i_*\mu$, if $\varphi_k\in\W_Z^{0,*}$, then 
$\varphi_k\wedge b_k\wedge i_*\mu$
is well-defined in $\W_D^{Z,*}$ since $\varphi_k\wedge\debar(dw/w^{\alpha+\mathbf{1}})$
exists as a tensor product. Moreover, $\J \varphi_k\wedge b_k\wedge i_*\mu=0$ and so  
$\varphi_k\wedge b_k\wedge i_*\mu$ defines an element in $\W_X^{n,*}$. Hence, $\varphi_k\wedge b_k$ induces a mapping
$\Ba_X^{n-p}\to\W_X^{n,*}$.


\smallskip

With this in mind we make the following definition.
\begin{definition}
$\mathcal{V}_X^{p,*}:=\Hom_{\hol_X}(\Ba_X^{n-p},\W_X^{n,*})$.
\end{definition}
\begin{remark}
The sheaf $\V_X^{0,*}$ was introduced in \cite[Section~7]{AL} but was denoted by $\W_X^{0,*}$ there. In this paper
$\W_X^{0,*}$ naturally has another meaning, see Definition~\ref{WXdef}; cf.\ also Proposition~\ref{Vprop} below.  
\end{remark}
If $\varphi\in\E_D^{p,*}$, then $\varphi$ defines an element $\varphi'$ in $\mathcal{V}_X^{p,*}$ by
$\varphi'(\mu)=\tau$, where $i_*\tau=\varphi\wedge i_*\mu$.
By Definition~\ref{smoothdef} and \eqref{barlet-bjork}, $\varphi'=0$ if and only if $\varphi\in\Kernel_p i^*$.
Hence, we have a well-defined injection
\begin{equation*}
\E_X^{p,*}\hookrightarrow \mathcal{V}_X^{p,*}.
\end{equation*}

In consistency with Remark~\ref{gnabbigare}, for $\varphi\in\V_X^{p,*}$ and $\mu\in\Ba_X^{n-p}$
we write $\varphi\wedge\mu$ instead of $\varphi(\mu)$.

\begin{definition}
Let $\varphi, \psi\in\V_X^{p,*}$. We say that $\debar\varphi=\psi$ if $\debar (\varphi\wedge\mu)=\psi \wedge\mu$
for all $\mu\in\Ba_X^{n-p}$.
\end{definition}


\begin{proposition}\label{hard}
Let $\varphi\in\mathcal{V}_X^{p,*}$. On $X_{p\text{-reg}}$ there are $\varphi_k\in\W_Z^{0,*}$ such that, for
any $\mu\in\Ba_X^{n-p}$,
\begin{equation}\label{ulan}
i_*\varphi\wedge\mu=\sum_{k=1}^\nu\varphi_k\wedge b_k\wedge i_*\mu
=\sum_{k=1}^\nu\sum_\alpha \varphi_k\wedge \pi_*(w^\alpha b_k\wedge i_*\mu)\wedge\debar\frac{dw}{w^{\alpha+\mathbf{1}}}.
\end{equation}
\end{proposition}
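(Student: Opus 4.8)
The plan is to reduce the statement to the explicit linear algebra encoded by the matrix $\widetilde T$ of \eqref{TT} and to solve a linear system with coefficients in $\W_Z^{0,*}$. The second equality in \eqref{ulan} carries no real content: for each fixed $k$ the current $b_k\wedge i_*\mu$ lies in $\CH_D^Z$, so by \eqref{grymta} and \eqref{klippning} it equals $\sum_\alpha \pi_*(w^\alpha b_k\wedge i_*\mu)\wedge\debar(dw/w^{\alpha+\ett})$, and wedging from the left by $\varphi_k\in\W_Z^{0,*}$ distributes over this tensor-product representation because $\varphi_k$ lives on the $z$-factor. Hence it suffices to produce $\varphi_k\in\W_Z^{0,*}$ on $\Xpreg$ satisfying the first equality $i_*\varphi\wedge\mu=\sum_k\varphi_k\wedge b_k\wedge i_*\mu$ for every $\mu\in\Ba_X^{n-p}$.

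Since $\mu_1,\dots,\mu_m$ generate $\Ba_X^{n-p}$ and the moment maps of \eqref{mapny} detect currents in $\W_Z$ faithfully, it is enough to solve, for all $j$ and all $|\alpha|<M$, the equations $\pi_*(w^\alpha i_*\varphi\wedge\mu_j)=\sum_k\varphi_k\wedge\pi_*(w^\alpha b_k\wedge i_*\mu_j)$; by the projection formula \eqref{projformel} (with $\varphi_k$ regarded as $\pi^*$ of a current on $Z$) the right-hand side is precisely $\widetilde T(\varphi_1,\dots,\varphi_\nu)^t$. Writing $d:=\big(\pi_*(w^\alpha i_*\varphi\wedge\mu_j)\big)_{j,|\alpha|<M}\in(\W_Z^{n,*})^{m\widetilde M}$, the task is to solve $\widetilde T(\varphi_k)_k=d$ over $\W_Z^{0,*}$.

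By Proposition~\ref{rutschkana} the holomorphic matrix $\widetilde T$ is generically pointwise injective on $\Xpreg$, so it has a minimal (Moore--Penrose) left inverse $\sigma$ that is smooth off a proper subvariety and almost semi-meromorphic there; setting $\varphi_k:=(\sigma d)_k$ produces currents in $\W_Z^{0,*}$, using that $\W_Z$ is preserved under wedging by almost semi-meromorphic currents, cf.\ \eqref{ASMprod}. The crux is to verify that $d$ in fact lies in the image of $\widetilde T$, so that $\widetilde T(\varphi_k)_k=\widetilde T\sigma d=d$ rather than merely $\sigma\widetilde T(\varphi_k)_k=\sigma d$. This is where the $\hol_X$-linearity of $\varphi$ is essential: a $\hol_Z$-linear relation $\sum_{j,\alpha}c_{j,\alpha}\pi_*(w^\alpha b_k\wedge i_*\mu_j)=0$ (for all $k$) among the rows of $\widetilde T$ says, by faithfulness of the moment pairing $\Om_X^p\times\Ba_X^{n-p}\to\CH_D^Z$ on $\Xpreg$ (where $\Om_X^p$ is locally free, hence reflexive, by Proposition~\ref{ulv} and the double-dual isomorphism of \eqref{horlur}), that $\sum_{j,\alpha}c_{j,\alpha}\,w^\alpha\mu_j=0$ in $\Ba_X^{n-p}$. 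Applying the $\hol_X$-linear map $\mu\mapsto\varphi\wedge\mu$ and taking moments then gives $\sum_{j,\alpha}c_{j,\alpha}d_{j,\alpha}=0$, so $d$ is orthogonal to the cokernel of $\widetilde T$ and hence lies in its image generically.

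Finally set $\psi:=\sum_k\varphi_k\wedge b_k$, which is a genuine element of $\V_X^{p,*}$ by the construction preceding its definition. By the previous step $i_*\psi\wedge\mu_j$ and $i_*\varphi\wedge\mu_j$ have the same moments generically on $\Xpreg$, hence agree there; since both have the SEP with respect to $Z$ they agree on all of $\Xpreg$, and as the $\mu_j$ generate $\Ba_X^{n-p}$ we conclude $\psi=\varphi$, which is the desired first equality in \eqref{ulan}. I expect the consistency step---checking that $d$ belongs to the image of $\widetilde T$---to be the main obstacle; the remaining ingredients are the injectivity and regularity of $\widetilde T$ and of its minimal inverse, together with a routine continuation argument via the SEP.
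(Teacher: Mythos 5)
Your proposal is essentially correct, and it shares the paper's overall scaffolding --- reduce \eqref{ulan} to the moment equations $\widetilde T(\varphi_k)_k=d$ with $d=\big(\pi_*(w^\alpha i_*\varphi\wedge\mu_j)\big)_{j,\alpha}$, and solve these over $\W_Z^{0,*}$ using an almost semi-meromorphic minimal inverse together with generic pointwise exactness --- but it replaces the paper's central technical step by a genuinely different, softer argument. The paper verifies that $d$ lies in the image of $\widetilde T$ by choosing $\widetilde A$ with $\ker\widetilde A=\operatorname{Im}\widetilde T$, proving exactness of \eqref{slurp2}, and then establishing $\widetilde A d=0$ at generic points via a heavy local-duality computation: tensoring the minimal resolution of $\Om_X^p$ with the Koszul complex of $z_1,\dots,z_n$, exploiting the isomorphism \eqref{korvskinn} for the resulting Artinian module $\mathscr F$, and checking that the point values of $d$ land in the image of $\widetilde T(0)$. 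You instead observe that any local holomorphic relation $c\in\ker\widetilde T^t$ among the rows of $\widetilde T$ forces $\tilde\mu:=\sum_{j,\alpha}c_{j,\alpha}w^\alpha\mu_j=0$ in $\Ba_X^{n-p}$ (an element of $\Hom_{\hol_X}(\Om_X^p,\CH_D^Z)$ vanishing on the generators $b_k$ is zero), and then $\hol_X$-linearity of $\varphi$ gives $0=\varphi\wedge\tilde\mu=\sum_{j,\alpha}c_{j,\alpha}w^\alpha(\varphi\wedge\mu_j)$, whence $c\cdot d=0$ after applying $\pi_*$; at generic points of $\Xpreg$, where $\widetilde T$ has locally constant rank, the evaluations of $\ker\widetilde T^t$ span the full pointwise annihilator of $\operatorname{Im}\widetilde T(z_0)$, so $d$ is pointwise in the image there. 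This isolates exactly where the hypothesis $\varphi\in\Hom_{\hol_X}(\Ba_X^{n-p},\W_X^{n,*})$ enters and bypasses the Koszul/duality machinery; note in particular that the rows of the paper's $\widetilde A$ are such relations $c$, so your step yields $\widetilde A d=0$ directly.

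Two points need tightening. First, to see that a row relation kills $\tilde\mu$ you must check that \emph{all} moments $\pi_*(w^\beta b_k\wedge i_*\tilde\mu)$ vanish, not only those recorded in $\widetilde T$; this works because $w^\beta b_k\equiv\sum_{k'}g_{k'}b_{k'}$ modulo $\J^p$ with $g_{k'}\in\hol_Z$ independent of $(j,\alpha)$, so the given relations can be reused column by column, but it should be said. Second, the passage from ``$(I-\widetilde T\sigma)d=0$ at generic points where $d$ is smooth'' to ``$(I-\widetilde T\sigma)d=0$ everywhere'' is not a consequence of the SEP alone, since the non-smooth locus of an element of $\W_Z$ need not be contained in an analytic set; this is precisely what Proposition~\ref{AWprop} is for, and it is how the paper carries out the same reduction.
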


\begin{proof}
Recall from \eqref{mapTsnok} the matrix $\widetilde{T}$. We can choose a holomorphic matrix $\widetilde A$ such that
\begin{equation}\label{blurp}
(\hol_Z)^\nu \stackrel{\widetilde{T}}{\longrightarrow} (\Om_Z^n)^{m\widetilde{M}}
\stackrel{\widetilde{A}}{\longrightarrow} (\Om_Z^n)^{M'}
\end{equation}
is exact. Then also 
\begin{equation}\label{slurp2}
(\W_Z^{0,*})^\nu \stackrel{\widetilde{T}}{\longrightarrow} (\W_Z^{n,*})^{m\widetilde{M}}
\stackrel{\widetilde{A}}{\longrightarrow} (\W_Z^{n,*})^{M'}
\end{equation}
is exact. To see this, notice first that \eqref{blurp} is generically pointwise exact.
Take Hermitian metrics on the vector bundles underlying the free sheaves in \eqref{blurp}
and let $\widetilde B$ and $\widetilde S$ be the Moore-Penrose inverses of $\widetilde T$ and $\widetilde A$, respectively.
Then $\widetilde B$ and $\widetilde S$ are almost semi-meromorphic, cf.\ the definition of $\sigma_j$ in connection to 
\eqref{karvkplx}. Moreover, on the set where \eqref{blurp} is pointwise exact, $\widetilde{S}\widetilde{A}+\widetilde{T}\widetilde{B}$ 
is the identity on $(\Om_Z^n)^{m\widetilde{M}}$.
Thus, if $\mu\in(\W_Z^{n,*})^{m\widetilde{M}}$ and 
$\widetilde{A}\mu=0$, we have $\mu=\widetilde{T}\widetilde{B}\mu$ since $\W$ 
is closed under multiplication by almost semi-meromorphic currents, cf.\ \eqref{ASMprod}.

Let $\varphi\in \mathcal{V}_X^{p,*}$ and let $\mu_j$, $j=1,\ldots,m$, be generators of $\Ba_X^{n-p}$. For notational convenience,
we will identify 
$\varphi\wedge\mu_j$ and $i_*\varphi\wedge\mu_j$ as well as $\mu_j$ and the corresponding currents in $i_*\Ba_X^{n-p}$. 
In view of \eqref{grymta},
\begin{equation}\label{bator}
\varphi\wedge\mu_j=\frac{1}{(2\pi i)^\kappa}\sum_\alpha \pi_*(w^\alpha\varphi\wedge\mu_j)\wedge\debar\frac{dw}{w^{\alpha+\mathbf{1}}}.
\end{equation} 
We claim that the tuple $(\pi_*(w^\alpha\varphi\wedge\mu_j))_{\alpha,j} \in (\W_Z^{n,*})^{m\widetilde{M}}$ is in the image of 
$(\W_Z^{0,*})^\nu$ under
$\widetilde{T}$. Given the claim, there are $\varphi_k\in\W_Z^{0,*}$ such that, cf.\ \eqref{TT},
\begin{equation*}
\pi_*(w^\alpha\varphi\wedge\mu_j) = (2\pi i)^\kappa\sum_k\varphi_k\wedge \pi_*(w^\alpha b_k\wedge\mu_j).
\end{equation*}
By \eqref{bator}, \eqref{ulan} follows with $\mu=\mu_j$. Since $\mu_j$ generate $\Ba_X^{n-p}$, \eqref{ulan} follows.

It remains to prove the claim. By exactness of \eqref{slurp2} we need to show that 
\begin{equation}\label{snubbe}
\widetilde{A} (\pi_*(w^\alpha\varphi\wedge\mu_j))_{\alpha,j}=0.
\end{equation}
In view of Proposition~\ref{AWprop} it is enough to show \eqref{snubbe} where $\pi_*(w^\alpha\varphi\wedge\mu_j)$ are smooth
and \eqref{blurp} is pointwise exact. Fix such a point; for notational convenience, suppose it is $0$.

Let \eqref{karvkplx} be a minimal Hermitian resolution of $\Om_{X}^p$ in a neighborhood of $0$. Since 
$\Om_{X}^p$ is Cohen--Macaulay on $\Xpreg$, $E_\ell=0$ for $\ell>\kappa$, and the corresponding 
currents $R=R_\kappa$ and  $\mathcal{R}=\mathcal{R}_\kappa$ are $\debar$-closed. Since the mapping \eqref{bibblan} is an isomorphism it follows
that the components, $\mu_j$, $j=1,\ldots,m$, of $\mathcal{R}$ (with respect to some frame of $E_\kappa$)  generate $i_*\Ba_X^{n-p}$. 
Let $(\hol(E'_\bullet),f'_\bullet)$ be the Koszul complex of the regular sequence $z_1,\ldots,z_n$ in $D$. 
Then $(\hol(E'_\bullet),f'_\bullet)$ equipped with the trivial metric is a Hermitian resolution of $\hol_D/\langle z\rangle$, $\hol(E'_0)=\hol_D$, 
$\hol(E'_n)=\hol_D$, and  the corresponding current is $R'=\debar (1/z):=\debar(1/z_1)\wedge\cdots\wedge\debar(1/z_n)$.

Let $(\hol(E''_\bullet),f''_\bullet)$ be the tensor product of the complexes $(\hol(E_\bullet),f_\bullet)$ and $(\hol(E'_\bullet),f'_\bullet)$,
i.e., $E''_k=\oplus_{i+j=k}E_i\otimes E'_j$ and $f''_\bullet=f_\bullet\otimes \mathbf{1}_{E'} + \mathbf{1}_{E}\otimes f'_\bullet$.
As the tensor product of minimal resolutions of properly intersecting Cohen--Macaulay modules, $(\hol(E''_\bullet),f''_\bullet)$ is a resolution
of $\mathscr{F}:=\hol(E''_0)/\Image f''_1$. Notice that $\hol(E''_0)=\hol(E_0)\otimes \hol(E'_0)=\hol(E_0)=\Om^p_D$ and that
$\mathcal{I}:=\Image f''_1=\Image f_1 \cdot \hol(E'_0)+ \Image f'_1 \cdot \hol(E_0)$ so that 
\begin{equation}\label{matta}
\mathscr{F}=\Om_D^p/\mathcal{I}=\Om_D^p/(\J^p + \langle z\rangle \Om_D^p).
\end{equation}
Clearly $\mathscr{F}$ is supported at $0$ and since $(\hol(E''_\bullet),f''_\bullet)$ has length $\kappa+n=N$, 
$\mathscr{F}$ is Cohen--Macaulay and $(\hol(E''_\bullet),f''_\bullet)$ is a minimal resolution. Following
\cite[Section~4]{Akrit}, the product $R\wedge R'$ makes sense and is the current $R''$ associated with
$(\hol(E''_\bullet),f''_\bullet)$. It follows that $\mu_j\wedge\debar(1/z)$ generate $\Hom_{\hol_X}(\Om_D^p/\mathcal{I},\CH_D^{Z})$.
Since $\mathscr{F}$ is Cohen--Macaulay, for the same reason that the second map in \eqref{horlur} is an isomorphism on $\Xpreg$, the map
\begin{equation}\label{korvskinn}
\mathscr{F}\to \Hom_{\hol_X}\big(\Hom_{\hol_X}(\Om_D^p/\mathcal{I},\CH_D^{Z}), 
\Hom_{\hol_X}(\hol_D/(\J+\langle z\rangle),\CH_D^{Z})\big),  
\end{equation}
\begin{equation*}
\phi\mapsto (\mu\mapsto \phi\wedge\mu),
\end{equation*}
is an isomorphism. In view of \eqref{grymta}, if $\phi\in\mathscr{F}$, then
\begin{equation}\label{kola}
\phi\wedge \mu_j\wedge \debar(1/z)=\frac{1}{(2\pi i)^N}\sum_\alpha \pi'_*(w^\alpha \phi\wedge \mu_j\wedge \debar\frac{1}{z})
\wedge\debar\frac{dw}{w^{\alpha+\mathbf{1}}}\wedge\debar\frac{dz}{z},
\end{equation}
where $\pi'$ is the map $(z,w)\mapsto 0$. The tuple 
$(\pi'_*(w^\alpha \phi\wedge \mu_j\wedge \debar(1/z)))_{\alpha,j}\in \C^{m\widetilde{M}}$
determines $\phi$ and we have the injective map
\begin{equation}\label{gullviva}
\mathscr{F}\to \C^{m\widetilde{M}},
\end{equation}
cf.\ \eqref{mapTsnok}.
In view of \eqref{matta}, since $b_k$ generate $\Om_X^p=\Om_D^p/\J^p$ over $\hol_Z$, $b_k$ also generate
$\mathscr{F}$ over $\hol_Z$. Hence, we have the surjective map $(\hol_Z)^\nu\to \mathscr{F}$, $(h_k)_k\mapsto \sum_kh_kb_k$.
Composing with \eqref{gullviva}, we get 
\begin{equation*}
\widetilde{\mathcal{T}}\colon (\hol_Z)^\nu\to \C^{m\widetilde{M}},\quad
\widetilde{\mathcal{T}}(h_k)_k=\big(\sum_k\pi'_*(w^\alpha h_kb_k\wedge \mu_j\wedge \debar\frac{1}{z})\big)_{\alpha,j}.
\end{equation*}
Recall (again) the map $\widetilde{T}$ from \eqref{mapTsnok} and \eqref{TT} and write $\widetilde{T}=\widetilde{T}'dz$, where 
$\widetilde{T}'$ is a matrix of holomorphic functions. Let $\pi''\colon Z\to \{0\}$ and notice that $\pi'=\pi''\circ \pi$.
We get
\begin{eqnarray*}
\pi'_*(w^\alpha h_kb_k\wedge \mu_j\wedge \debar\frac{1}{z}) &=& 
\pi''_*\big(\pi_*(w^\alpha b_k\wedge \mu_j)h_k\wedge \debar\frac{1}{z}\big) =
\pi''_*\big(\widetilde{T}'_{\alpha,j,k} h_k dz\wedge \debar\frac{1}{z}\big) \\
&=&
\widetilde{T}'_{\alpha,j,k}(0)h_k(0).
\end{eqnarray*}
Hence, $\widetilde{\mathcal{T}}dz=\widetilde{T}(0)$. Since \eqref{blurp} is pointwise exact at $0$
it follows that a tuple $(\lambda_{\alpha,j})\in\C^{m\widetilde{M}}$ is in the image of $\widetilde{\mathcal{T}}$
if and only if $\widetilde A (0) (\lambda_{\alpha,j})=0$. We remark that this implies that $\widetilde T$ is pointwise
injective on $\Xpreg$.

Now, by \eqref{bator}, since $\pi_*(w^\alpha\varphi\wedge\mu_j)$ is smooth in a neighborhood of $0$, we have
\begin{equation}\label{bator2}
\varphi\wedge\mu_j=\sum_\alpha\sum'_{|L|=*} \phi_{j,\alpha,L}(z)\wedge d\bar{z}_L\wedge\debar\frac{dw}{w^{\alpha+\mathbf{1}}},
\end{equation}
where $\phi_{j,\alpha,L}(z)$ are smooth $(n,0)$-forms on $Z$ and $d\bar{z}_L$ are a basis of $T^*_{0,*}Z$.
Set 
\begin{equation}\label{sas}
\phi_L(\mu_j\wedge \debar(1/z)):=\sum_\alpha \phi_{j,\alpha,L}(z)\wedge\debar\frac{dw}{w^{\alpha+\mathbf{1}}}
\wedge \debar\frac{1}{z}.
\end{equation} 
To see that $\phi_L$ is well-defined, recall that $\mu_j$ are the components of $\mathcal{R}$.
Since \eqref{bibblan} is an isomorphism it follows that the relations between the $\mu_j$ are generated by $f_\kappa^*$.
In the same way, it follows that the relations between $\mu_j \wedge \debar(1/z)$, which are the components of $R''$,
are generated by $(f_{\kappa+n}'')^* = f_\kappa^*\otimes \mathbf{1}_{(E'_n)^*} \oplus \mathbf{1}_{E_\kappa^*}\otimes (f'_n)^*$.
Thus, if $a_j$ are such that $\sum_j a_j'' \mu_j \wedge \debar(1/z) = 0$, then we have that $a_j'' = a_j + a_j'$, where
$\sum_j a_j \mu_j = 0$, and $a_j' \debar(1/z) = 0$. This implies that $\phi_L$ is well-defined.

Now, $\phi_L(\mu_j\wedge \debar(1/z))$ is an $(N,N)$-current, in particular $\debar$-closed, and it is annihilated by
$\langle z \rangle$. Moreover, it is annihilated by $\J$ since $\varphi\wedge\mu_j$ is, and
\begin{equation*}
\varphi\wedge\mu_j\wedge d\bar{z}_{L^c}=\pm d\bar{z}\wedge\sum_\alpha \phi_{j,\alpha,L}(z)\wedge\debar\frac{dw}{w^{\alpha+\mathbf{1}}},
\end{equation*}
where $L^c=\{1,\ldots,n\}\setminus L$. Hence, $\phi_L(\mu_j\wedge \debar(1/z))$ is in $\Hom_{\hol_X}(\hol_D/(\J+\langle z\rangle),\CH_D^{Z})$
and it follows that $\phi_L$ is in the right-hand side of \eqref{korvskinn}.
Since  \eqref{korvskinn} is an isomorphism, $\phi_L$ is multiplication by an element, also denoted $\phi_L$, in $\mathscr{F}$.
In view of \eqref{kola} and \eqref{sas}, the image under \eqref{gullviva} of $\phi_L$ is the tuple
\begin{equation*}
(2\pi i)^N \big(\phi_{j,\alpha,L}(0)\big)_{\alpha,j}.
\end{equation*}
It is in the image of $\widetilde{\mathcal{T}}$ and hence in the kernel of $\widetilde A (0)$. Thus,
\begin{equation*}
0=\widetilde A (0) \big(\sum'_{|L|=*}\phi_{j,\alpha,L}(0)\wedge d\bar{z}_L\big)_{\alpha,j}.
\end{equation*}
However, in view of \eqref{bator} and \eqref{bator2}, $\sum'_{|L|=*}\phi_{j,\alpha,L}(0)\wedge d\bar{z}_L$ is the value 
of $\pi_*(w^\alpha\varphi\wedge\mu_j)$ at $0$ and so \eqref{snubbe} follows at $0$.
Hence, \eqref{snubbe} follows at points where $\pi_*(w^\alpha\varphi\wedge\mu_j)$ are smooth
and \eqref{blurp} is pointwise exact, concluding the proof of the claim.
\end{proof}

By Proposition~\ref{hard}, if $\varphi\in\VV_X^{p,*}$ then, on $\Xpreg$, there are $\varphi_k\in\W_Z^{0,*}$ such that 
$\varphi$ is given by multiplication by $\sum_k\varphi_k\wedge b_k$ in the way described in the second paragraph of this section. 
In this way we can identify $\VV_X^{p,*}$ with such sums
on $\Xpreg$. 

The following lemma is proved in the same way as Lemma~7.7 and Corollary~7.8 are proved in \cite{AL}.

\begin{lemma}\label{rutat}
Each $\varphi\in\VV_X^{p,*}=\Hom_{\hol_X}(\Ba_X^{n-p},\W_X^{n,*})$ has a unique extension to an element in
$\Hom_{\E_X^{0,*}}(\E_X^{0,*}\wedge\Ba_X^{n-p},\W_X^{n,*})$.
Moreover, if $\mu\in\W_X^{n-p,*}$ is such that $i_*\mu=\sum_\ell a_\ell\wedge i_*\mu_\ell$, where $\mu_\ell\in\Ba_X^{n-p}$ and
$a_\ell$ are almost semi-meromorphic in $D$ and generically smooth on $Z$, then $\varphi\wedge\mu$ is well-defined in  
$\W_X^{n,*}$ by the formula
\begin{equation*}
i_* (\varphi\wedge\mu)=\sum_\ell (-1)^{\text{deg}\, a_\ell\cdot \text{deg}\, \varphi} a_\ell\wedge i_*(\varphi\wedge\mu_\ell),
\end{equation*}
where the products by $a_\ell$ are defined as in \eqref{ASMprod}.
\end{lemma}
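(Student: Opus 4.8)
The plan is to establish the two assertions separately, in both cases reducing the construction to the dense open set $\Xpreg$, where Proposition~\ref{hard} supplies an explicit local description, and then extending the resulting identities to all of $X$ by means of the standard extension property.

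For the first assertion, uniqueness is immediate: any $\E_X^{0,*}$-linear extension must send $\psi\wedge\mu$, with $\psi\in\E_X^{0,*}$ and $\mu\in\Ba_X^{n-p}$, to $\pm\psi\wedge(\varphi\wedge\mu)$, and such products generate $\E_X^{0,*}\wedge\Ba_X^{n-p}$ over $\E_X^{0,*}$. To see that this prescription is well defined I would work on $\Xpreg$ and use Proposition~\ref{hard} to represent $\varphi$ as multiplication by $\sum_k\varphi_k\wedge b_k$ with $\varphi_k\in\W_Z^{0,*}$. Since a smooth form multiplies a pseudomeromorphic current without any ambiguity, the products $\psi\wedge\varphi_k$ are well defined in $\W_Z^{0,*}$, and they furnish a candidate value for the extended map. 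The point to check is that this value depends only on the current $\sum_j\psi_j\wedge\mu_j\in\W_X^{n-p,*}$ and not on the chosen representation. Here I would invoke the uniqueness of the expansion \eqref{grymta} together with formula \eqref{klippning}: a smooth form multiplies such a current coefficientwise in the $w$-expansion, so a vanishing relation among the $\psi_j\wedge\mu_j$ translates into vanishing of the corresponding relations among the coefficients, and these are preserved when $\varphi$ is applied coefficient by coefficient. This gives well-definedness generically on $\Xpreg$, and Proposition~\ref{AWprop} then propagates the identity to all of $X$.

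For the second assertion, the products $a_\ell\wedge i_*(\varphi\wedge\mu_\ell)$ are meaningful because $i_*(\varphi\wedge\mu_\ell)$ lies in $\W_D^{Z,*}$ while $a_\ell$ is almost semi-meromorphic, so the product is defined as in \eqref{ASMprod} and again lies in $\W_D$, hence defines a section of $\W_X^{n,*}$. The substance is that the displayed formula depends only on $\mu$ and not on the representation $i_*\mu=\sum_\ell a_\ell\wedge i_*\mu_\ell$. I would argue this generically on $\Xpreg$, where every $a_\ell$ is smooth: there the almost semi-meromorphic products reduce to honest multiplication by smooth forms, and the expression coincides with the value of the $\E_X^{0,*}$-linear extension from the first part applied to $\mu=\sum_\ell a_\ell\wedge\mu_\ell$, which is already known to depend only on $\mu$. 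Invoking the SEP once more extends this equality from $\Xpreg$ to $X$, so that $\varphi\wedge\mu$ is well defined.

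The main obstacle in both parts is exactly this well-definedness, that is, verifying that the naive formulas respect all relations among the generators. The mechanism is the same in each case: reduce to a generic statement on $\Xpreg$, where the $w$-expansion \eqref{grymta} is available and the almost semi-meromorphic factors are smooth, so that the relevant products are genuine and the relations are transparent; then use that the currents involved have the SEP (Proposition~\ref{AWprop}) to conclude that an identity valid on the dense open set $\Xpreg$ holds everywhere. The one delicate point is to ensure that passing from the generically smooth $a_\ell$ to their almost semi-meromorphic extensions commutes with the construction, which is precisely guaranteed by the characterization \eqref{ASMprod} of the almost semi-meromorphic product as a regularized limit.
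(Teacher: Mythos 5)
Your proposal is correct and follows essentially the same strategy as the paper, which for this lemma simply refers to the proofs of Lemma~7.7 and Corollary~7.8 in \cite{AL}: uniqueness from $\E_X^{0,*}$-linearity, well-definedness checked on $\Xpreg$ via the representation of $\varphi$ from Proposition~\ref{hard} and the expansion \eqref{grymta}, and extension to all of $X$ by the SEP (and the same reduction, to the set where the $a_\ell$ are smooth, for the second part). The only imprecision is the word ``coefficientwise'': a smooth form $\psi(z,w)$ acts on a current of the form \eqref{grymta} by Taylor expansion in $w$ and therefore mixes the coefficients rather than acting diagonally, but the action still factors through the current itself (via the coefficients $\pi_*(w^\alpha\,\cdot\,)$), which is exactly what your argument needs.
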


By this lemma $\V_X^{p,*}$ gets a natural $\E_X^{0,*}$-module structure,
which is the same as the $\E_X^{0,*}$-module structure it inherits from $\W_X^{n,*}$.

\subsection{The sheaf $\V_X^{p,*}$ in case $X$ is reduced}\label{Vred}

\begin{proposition}\label{Vprop}
If $X=Z$ is reduced then $\mathcal{V}_X^{p,*}=\W_{X}^{p,*}$. 
\end{proposition}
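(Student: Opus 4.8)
The plan is to exhibit a natural morphism $\Psi\colon\W_X^{p,*}\to\V_X^{p,*}$ and to prove it is an isomorphism. For a current $u\in\W_X^{p,*}$ I would set $\Psi(u)(\omega):=u\wedge\omega$ for $\omega\in\Ba_X^{n-p}$. Since $X=Z$ is reduced, each $\omega$ is a $\debar$-closed meromorphic $(n-p)$-form, hence almost semi-meromorphic on $Z$ (Section~\ref{prelim}), so by the discussion around \eqref{ASMprod} the product $u\wedge\omega$ is a well-defined section of $\W_Z^{n,*}$; as both factors are currents on $Z$, so is the product, and thus $u\wedge\omega\in\W_X^{n,*}$. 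The assignment is $\hol_X$-linear in $\omega$, so $\Psi(u)\in\V_X^{p,*}=\Hom_{\hol_X}(\Ba_X^{n-p},\W_X^{n,*})$ and $\Psi$ is a morphism of sheaves.

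To prove injectivity, suppose $\Psi(u)=0$, i.e.\ $u\wedge\omega=0$ for all $\omega\in\Ba_X^{n-p}$. On $Z_{reg}=\Xpreg$ the space is smooth and, by Remark~\ref{skrutt}, $\Ba_X^{n-p}$ coincides with the locally free sheaf $\Om_{Z_{reg}}^{n-p}$ of holomorphic $(n-p)$-forms; in local coordinates the pairing $\Om_{Z_{reg}}^{n-p}\times\W_{Z_{reg}}^{p,*}\to\W_{Z_{reg}}^{n,*}$ is non-degenerate (testing against $\omega=dz_J$, $|J|=n-p$, kills every $dz_I$-component of $u$), so $u=0$ on $Z_{reg}$. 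Then $u$ is supported in $Z_{sing}$, which meets $Z$ properly, and the SEP gives $u=\mathbf{1}_{Z_{sing}}u=0$.

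For surjectivity, let $\varphi\in\V_X^{p,*}$. Here I would invoke Proposition~\ref{hard}: there are $\varphi_k\in\W_Z^{0,*}$ (sections over the whole chart, by the construction in its proof) such that $\varphi\wedge\omega=\sum_k\varphi_k\wedge b_k\wedge\omega$ on $\Xpreg$ for every $\omega\in\Ba_X^{n-p}$. Set $u:=\sum_k\varphi_k\wedge b_k$, using the ambient representatives $b_k\in\Om_D^p$. Because the $b_k$ are smooth and the $\varphi_k$ are pseudomeromorphic with the SEP, $u$ is a pseudomeromorphic $(p,*)$-current on $Z$ with the SEP; and since each $\varphi_k$ is a current on $Z$, the defining relations in Section~\ref{prelim} give $\J_Z\varphi_k=d\J_Z\wedge\varphi_k=0$, whence $\J u=d\J\wedge u=0$ (recall $\J=\J_Z$ as $X$ is reduced) and hence $u\in\W_X^{p,*}$ by Definition~\ref{WXdef}. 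On $Z_{reg}$, where all products reduce to the classical ones, $\Psi(u)(\omega)=u\wedge\omega=\sum_k\varphi_k\wedge b_k\wedge\omega=\varphi\wedge\omega$; the difference $\Psi(u)(\omega)-\varphi\wedge\omega$ lies in $\W_X^{n,*}$ and vanishes on $Z_{reg}$, so by the SEP it vanishes identically and $\Psi(u)=\varphi$. Thus $\Psi$ is onto, and therefore an isomorphism.

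The hard part will be surjectivity. The difficulty is to promote the purely local, $\Xpreg$-defined description of the abstract $\Hom$-element $\varphi$ furnished by Proposition~\ref{hard} to a genuine global current $u\in\W_X^{p,*}$. What makes this go through is that the coefficients $\varphi_k$ are already honest sections of $\W_Z^{0,*}$ over the chart, that the resulting $u$ automatically inherits the annihilation relations $\J u=d\J\wedge u=0$ and the SEP, and that, consequently, the identity $\Psi(u)=\varphi$ need only be verified generically and then propagates to all of $X$ by the standard extension property.
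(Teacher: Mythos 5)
Your map $\Psi$ and your injectivity argument are exactly the paper's: well-definedness comes from the fact that sections of $\Ba_Z^{n-p}$ are (almost) semi-meromorphic so that \eqref{ASMprod} applies, and injectivity follows by testing against holomorphic forms on $Z_{reg}$ and invoking the SEP. The problem is surjectivity, which you correctly identify as the hard part but do not actually establish. Proposition~\ref{hard} is a statement \emph{on} $\Xpreg$: its entire setup (the coordinates $(z,w)$ with $Z=\{w=0\}$, the free $\hol_Z$-basis $\{b_k\}$ of $\Om_X^p$, the matrix $\widetilde T$ and its Moore--Penrose inverse) lives on a chart centered at a point of $\Xpreg$, and for reduced $X$ one has $\Xpreg=Z_{reg}$. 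So the current $u=\sum_k\varphi_k\wedge b_k$ you build is only defined on $Z_{reg}$; your parenthetical claim that the $\varphi_k$ are ``sections over the whole chart'' does not help, because no such chart contains a singular point of $Z$. Since $\V_X^{p,*}=\W_X^{p,*}$ is a sheaf identity, surjectivity must be proved at the stalks over $Z_{sing}$, and that is precisely where all the content lies: you must produce a pseudomeromorphic current with the SEP \emph{across} the singular locus representing the given $\Hom$-element, and neither the existence of such an extension nor its SEP comes for free from what you have written.

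The paper closes exactly this gap with two lemmas you do not use. Lemma~\ref{lemmaW1} shows that $\pi_*\colon\W_{\widetilde Z}\to\W_Z$ is a bijection for a modification $\pi$, and Lemma~\ref{lemmaW2} uses a resolution of singularities to divide: given $\nu\in\W_Z^{n,q}$ and a generically non-zero $\mu\in\Ba_Z^n$ there is a unique $\nu'\in\W_Z^{0,q}$ with $\nu=\mu\wedge\nu'$, \emph{globally across} $Z_{sing}$. The proof of surjectivity then works near an arbitrary (possibly singular) point: choose a local parametrization $Z\cap(\Delta_z\times\Delta_w)\to\Delta_z$ so that the $dz_I$, $|I|=n-p$, are generically a basis of $\Ba_Z^{n-p}$ over the meromorphic functions (Remark~\ref{sladd} reduces the problem to these generators), apply Lemma~\ref{lemmaW2} to write $h(dz_J)=dz\wedge\nu_J$ with $\nu_J\in\W_Z^{0,q}$, and set $\nu=\sum_J dz_{J^c}\wedge\nu_J$. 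If you want to keep your route through Proposition~\ref{hard}, you would still have to supply an argument that your $u$, defined a priori on $Z_{reg}$, extends to an element of $\W_X^{p,*}$ near each singular point and that the extension still represents $\varphi$ --- which in effect forces you to reprove something equivalent to Lemma~\ref{lemmaW2}.
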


\begin{lemma}\label{lemmaW1}
If $\pi\colon\widetilde{Z}\rightarrow Z$ is a modification then $\pi_{*}\colon\W_{\widetilde{Z}}\rightarrow\W_{Z}$ is a bijection.
\end{lemma}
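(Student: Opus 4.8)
The plan is to verify that $\pi_*$ is well defined on $\W_{\widetilde Z}$, and then establish injectivity and surjectivity separately; the recurring engine is the elementary consequence of the SEP that a current in $\W$ whose support lies in a subvariety intersecting the ambient space properly must vanish. First I would note that $\pi_*$ maps $\W_{\widetilde Z}$ into $\W_Z$: since $\pi$ is a modification, pseudomeromorphicity is preserved under $\pi_*$, and since $\pi$ is equidimensional the bidegree $(n,q)$ is preserved. To see that the SEP is preserved, let $V\subset Z$ be a germ of an analytic set intersecting $Z$ properly; then $\pi^{-1}V$ intersects $\widetilde Z$ properly, so for $\tau\in\W_{\widetilde Z}$ formula \eqref{SEPprojformel} gives $\mathbf{1}_V\pi_*\tau=\pi_*(\mathbf{1}_{\pi^{-1}V}\tau)=0$, where properness of $\pi$ supplies the fiberwise compact support needed for \eqref{SEPprojformel}. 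Hence $\pi_*\tau\in\W_Z$.

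Let $E\subset\widetilde Z$ be the exceptional set, so that $\pi$ restricts to a biholomorphism $\widetilde Z\setminus E\to Z\setminus\pi(E)$ with both $E$ and $\pi(E)$ of codimension at least one. For injectivity, suppose $\pi_*\tau=0$. Off $\pi(E)$ the map $\pi$ is biholomorphic, so $\tau=0$ on $\widetilde Z\setminus E$; thus $\tau$ has support in $E$, whence $\mathbf{1}_{\widetilde Z\setminus E}\tau=0$ and $\tau=\mathbf{1}_E\tau$. Since $E$ intersects $\widetilde Z$ properly, the SEP forces $\mathbf{1}_E\tau=0$, and therefore $\tau=0$.

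For surjectivity, take $\mu\in\W_Z$. On $Z\setminus\pi(E)$ the inverse of $\pi$ is holomorphic, so I push $\mu$ forward by it to obtain a pseudomeromorphic current $\sigma$ on $\widetilde Z\setminus E$, with the SEP relative to $\widetilde Z\setminus E$ and satisfying $\pi_*\sigma=\mu$ there. The plan is then to produce a global pseudomeromorphic current $T$ on $\widetilde Z$ restricting to $\sigma$ off $E$ and to set $\tau:=\mathbf{1}_{\widetilde Z\setminus E}T$, which is pseudomeromorphic by \eqref{tabasco} and carries no mass on $E$ by construction; one checks, exactly as in the injectivity step, that the SEP of $\tau$ follows from that of $\sigma$ together with $\mathbf{1}_E\tau=0$. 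Finally $\pi_*\tau$ and $\mu$ both lie in $\W_Z$ and agree on $Z\setminus\pi(E)$, so their difference has support in $\pi(E)$ and the SEP, and hence vanishes by the support argument above; thus $\pi_*\tau=\mu$.

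The main obstacle is the existence of the pseudomeromorphic extension $T$, i.e.\ that $\sigma$ is not too singular along $E$. I expect to handle this by passing to a smooth resolution $\rho\colon Y\to\widetilde Z$, so that $\pi\circ\rho$ is a modification of $Z$ from a smooth space. There the generic pullback of $\mu$, after transporting the local representation \eqref{grymta} through the biholomorphism, is a finite sum of semi-meromorphic currents and $\debar$ of such, and these extend across the exceptional divisor as their standard (principal value) extensions; pushing the extension down by $\rho_*$ produces the desired $T$ on $\widetilde Z$. Proposition~\ref{AWprop} guarantees that the resulting $\tau$ is independent of the choices made, so the construction is canonical and completes the proof that $\pi_*$ is a bijection.
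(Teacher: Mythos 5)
Your well-definedness and injectivity arguments are fine and essentially identical to the paper's (the paper's injectivity step is exactly yours: $\pi_*\tau=0$ forces $\tau$ to vanish off the exceptional set $E$, and the SEP kills the rest). The genuine problem is in the surjectivity step, at precisely the point you flag as ``the main obstacle'': the existence of a pseudomeromorphic extension $T$ of $\sigma$ across $E$. Your proposed justification does not work. A general element of $\W_Z$ is \emph{not} a finite sum of semi-meromorphic currents and $\debar$ of such; by definition it is only a locally finite sum of push-forwards $f_*\tau$ of elementary currents under holomorphic maps $f$ from auxiliary spaces. The representation \eqref{grymta} does not help here: it applies to currents in the ambient space supported on a smooth $Z$, and its coefficients $\mu_\alpha$ are again arbitrary elements of $\W_Z^{n,*}$, not semi-meromorphic ones. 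So after pulling $\mu$ back through the biholomorphism off $E$ you have no standard principal-value extension to invoke, and in general a pseudomeromorphic current defined only off a subvariety need not extend pseudomeromorphically across it. This is the real content of the lemma and your argument leaves it unproved.

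The paper sidesteps the extension problem by going in the opposite direction: it invokes \cite[Proposition~1.2]{Alitennot}, which asserts that for a modification $\pi$ the push-forward $\pi_*\colon\PM_{\widetilde{Z}}\to\PM_Z$ is surjective. Given $\nu\in\W_Z$ one thus obtains \emph{some} $\widetilde{\tau}\in\PM_{\widetilde{Z}}$ with $\pi_*\widetilde{\tau}=\nu$, and then corrects it by setting $\tau:=\mathbf{1}_{\widetilde{Z}\setminus E}\widetilde{\tau}$; this $\tau$ lies in $\W_{\widetilde{Z}}$, and $\pi_*\tau=\nu$ because both sides are in $\W_Z$ and agree generically. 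Your cut-off-and-compare-generically endgame is exactly the paper's, so the proof is repaired by replacing your extension construction with this citation (or with an independent proof of that surjectivity, which is a nontrivial structural fact about $\PM$ under modifications).
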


\begin{proof}
Denote the exceptional set of the modification by $E$. If $\pi_{*}\tau=0$ then $\tau$ is zero on $\widetilde{Z}\setminus E$ 
and by the SEP $\tau$ is zero everywhere. Hence $\pi_{*}$ is injective.

To show that the map is surjective pick $\nu\in\W_{Z}$. By \cite[Proposition~1.2]{Alitennot} there is a 
$\widetilde{\tau}\in\PM_{\widetilde{Z}}$ such that $\pi_{*}\widetilde{\tau}=\nu$ . We have 
$\widetilde{\tau}\in\W_{\widetilde{Z}\setminus E}$ since $\pi$ is a biholomorphism on $\widetilde{Z}\setminus E$. 
If we let $\tau:=\mathbf{1}_{\widetilde{Z}\setminus E}\widetilde{\tau}$ then $\tau\in\W_{\widetilde{Z}}$ since $\tau$ 
must have the SEP with respect to every subvariety. We also have $\pi_{*}\tau=\nu$ since both 
$\pi_{*}\tau$ and $\pi_{*}\widetilde{\tau}$ are in $\W_{Z}$ and they are equal generically and therefore equal everywhere.
\end{proof}

\begin{lemma}\label{lemmaW2}
Given $\nu\in\W^{n,q}_{Z}$ and a generically non-zero $\mu\in\Ba^{n}_{Z}$ there is a unique 
$\nu'\in\W^{0,q}_{Z}$ such that $\nu=\mu\wedge\nu'$.
\end{lemma}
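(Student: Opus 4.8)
The plan is to reduce to the smooth case via a modification, treating uniqueness separately and directly with the SEP. Let $W\subset Z$ be the proper analytic subset consisting of $Z_{\text{sing}}$ together with the polar set and the zero set of $\mu$; on $U:=Z\setminus W$ the form $\mu$ is a nonvanishing holomorphic $(n,0)$-form, so wedging by $\mu$ is a bijection from $(0,q)$-currents to $(n,q)$-currents there. Uniqueness is then immediate: if $\mu\wedge\nu'_1=\mu\wedge\nu'_2$, then $\delta:=\nu'_1-\nu'_2\in\W^{0,q}_Z$ satisfies $\mu\wedge\delta=0$, hence $\delta=0$ on $U$; since $W$ meets $Z$ properly and $\delta$ has the SEP, we get $\mathbf{1}_W\delta=0$ and $\mathbf{1}_{Z\setminus W}\delta=0$, so $\delta=0$.

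For existence I would take a modification $\pi\colon\widetilde Z\to Z$ with $\widetilde Z$ smooth (a resolution of singularities). By Lemma~\ref{lemmaW1} there is a unique $\widetilde\nu\in\W^{n,q}_{\widetilde Z}$ with $\pi_*\widetilde\nu=\nu$. The pullback $\pi^*\mu$ is a generically nonzero meromorphic $n$-form on the smooth manifold $\widetilde Z$, so locally $\pi^*\mu=h\,d\zeta$ with $h$ meromorphic and $h\not\equiv 0$; then $1/h$ is again meromorphic, hence almost semi-meromorphic. Writing $\widetilde\nu=d\zeta\wedge\tilde\eta$ with a unique $\tilde\eta\in\W^{0,q}_{\widetilde Z}$ (by \cite[Theorem~3.7]{AWdirect}, cf.\ Remark~\ref{gnabbigare}), I set $\widetilde{\nu'}:=(1/h)\,\tilde\eta$, the product being the almost semi-meromorphic product \eqref{ASMprod}, so that $\widetilde{\nu'}\in\W^{0,q}_{\widetilde Z}$. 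These local currents agree on the dense open set where $h$ is a nonvanishing holomorphic function, and by the SEP they glue to a global $\widetilde{\nu'}\in\W^{0,q}_{\widetilde Z}$; moreover $\pi^*\mu\wedge\widetilde{\nu'}$ and $\widetilde\nu$ are both in $\W^{n,q}_{\widetilde Z}$ and agree where $\pi^*\mu$ is nonvanishing holomorphic, whence $\pi^*\mu\wedge\widetilde{\nu'}=\widetilde\nu$ by the SEP.

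Finally I would set $\nu':=\pi_*\widetilde{\nu'}\in\W^{0,q}_Z$ and verify $\mu\wedge\nu'=\nu$ via
\begin{equation*}
\mu\wedge\pi_*\widetilde{\nu'}=\pi_*(\pi^*\mu\wedge\widetilde{\nu'})=\pi_*\widetilde\nu=\nu,
\end{equation*}
where the first equality is a projection formula. The main obstacle is exactly this identity: the projection formula \eqref{projformel} as stated requires a \emph{smooth} factor, whereas $\mu$ is only meromorphic, i.e.\ almost semi-meromorphic. The technical heart of the argument will therefore be to establish the projection formula for almost semi-meromorphic forms; I expect this to follow by regularizing $\mu$ and $\pi^*\mu$ with a common cut-off $\chi(|h|^2/\epsilon)$ as in \eqref{ASMprod} and passing to the limit with the aid of \eqref{SEPprojformel}, after which the construction on $\widetilde Z$ and the SEP bookkeeping are routine.
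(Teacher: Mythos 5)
Your argument is essentially the paper's own proof: resolve singularities, lift $\nu$ to $\widetilde Z$ via Lemma~\ref{lemmaW1}, divide by $\pi^*\mu$ on the smooth model (the paper writes $\tau':=\tau/\pi^*\mu$, which is your local $(1/h)\tilde\eta$ construction), push forward, and get uniqueness from the SEP. The one step you flag as the ``technical heart'' --- the projection formula with the almost semi-meromorphic factor $\mu$ --- does not need a separate regularization argument: both $\pi_*(\pi^*\mu\wedge\widetilde{\nu'})$ and $\mu\wedge\pi_*\widetilde{\nu'}$ lie in $\W_Z^{n,q}$ (the former by Lemma~\ref{lemmaW1}, the latter since $\W_Z$ is closed under multiplication by almost semi-meromorphic currents, cf.\ \eqref{ASMprod}), and they agree on the dense open set where $\mu$ is smooth by \eqref{projformel}, so they coincide everywhere by the SEP.
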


\begin{proof}
Let $\pi:\widetilde{Z}\rightarrow Z$ be a resolution of singularities. Then $\pi^*\mu$ is a generically non-zero meromorphic
$n$-form on $\widetilde Z$. Moreover, by Lemma~\ref{lemmaW1} there is 
a unique $\tau\in\W^{n,q}_{\widetilde{Z}}$ such that $\pi_{*}\tau=\nu$. In view of \cite[Theorem~3.7]{AWdirect}, 
since $\widetilde{Z}$ is smooth, $\tau$ is a 
$K_{\widetilde Z}$-valued section of $\W^{0,q}_{\widetilde{Z}}$. Thus, $\tau':=\tau/\pi^*\mu$ is a section
of $\W_{\widetilde Z}^{0,q}$, and $\tau=\pi^{*}\mu\wedge\tau'$, cf.\ \eqref{ASMprod}. 
Then $\nu=\pi_{*}\tau=\pi_{*}(\pi^{*}\mu\wedge\tau')=\mu\wedge\pi_{*}\tau'$ and thus $\pi_{*}\tau'$ does the job.

If we have two currents satisfying the lemma then they are equal where $\mu$ is non-zero. 
By assumption this means that they are equal generically and then, by the SEP, they are equal everywhere.
\end{proof}

\begin{remark}\label{sladd}
Any $h\in\Hom_{\hol_Z}(\Ba_{Z}^{n-p},\W^{n,q}_{Z})$ naturally extends to operate on forms 
$f\mu$, where $f$ is a germ of a meromorphic function on $Z$, and $\mu\in\Ba_{Z}^{n-p}$. 
The extension is unique and $h$ becomes linear over the sheaf of meromorphic functions on $Z$. 
Notice that $f\mu$ is not necessarily in $\Ba_{Z}^{n-p}$.
\end{remark}

\begin{proof}[Proof of Proposition~\ref{Vprop}.]
The currents in $\Ba_{Z}^{n-p}$ are meromorphic and in particular almost semi-meromorphic. 
In view of \eqref{ASMprod} and the comment following it, $a\wedge\nu$ is well-defined and in $\W_Z$ for any 
almost semi-meromorphic current $a$ on $Z$ and any $\nu\in \W_Z$.
Hence we can define a map $\Psi\colon\W_{Z}^{p,q}\rightarrow\Hom_{\hol_Z}(\Ba_{Z}^{n-p},\W^{n,q}_{Z})$ by $(\Psi\nu)(\mu)=\mu\wedge\nu$. 
If $\mu\wedge\nu=0$ for all $\mu\in\Ba_{Z}^{n-p}$ then $\nu=0$ on $Z_{reg}$. But then, by the SEP,
$\nu=0$ on $Z$ and hence $\Psi$ is injective.

To show that $\Psi$ is surjective take $h\in\Hom_{\hol_Z}(\Ba_{Z}^{n-p},\W^{n,q}_{Z})$. Suppose we have a local 
parametrization $Z\cap(\Delta_{z}\times\Delta_{w})\rightarrow\Delta_{z}$ of $Z$, where $\Delta_z$ and $\Delta_w$ are polydiscs in $\C^{n}_z$
and $\C^\kappa_w$, respectively,   so that $\{d z_{I}\}_{|I|=n-p}$ 
generically is a basis for $\Ba_{Z}^{n-p}$. This means that $\mu\in\Ba^{n-p}_{Z}$ may be written $\mu=\sum_{|I|=n-p}f_{I}d z_{I}$ 
for some meromorphic functions $f_{I}$ on $Z$. Therefore, by Remark~\ref{sladd}, it suffices to find $\nu\in\W_{Z}^{p,q}$ so that 
$h(d z_{I})=d z_{I}\wedge\nu$ for all $I$. By Lemma \ref{lemmaW2} there are unique $\nu_{J}\in\W^{0,q}_{Z}$ with $h(d z_{J})=d z\wedge\nu_{J}$. 
We let $\nu=\sum_{J}d z_{J^{c}}\wedge\nu_{J}$, so that $\nu\in\W^{p,q}_{Z}$, and get 
$d z_{I}\wedge\nu=\sum_{J}d z_{I}\wedge d z_{J^{c}}\wedge\nu_{J}=d z\wedge\nu_{I}=h(d z_{I})$.
\end{proof}

\section{Integral operators on $X$}\label{intop}
Given our local embedding $i\colon X\to D\subset\C^N$ as usual and a choice of local coordinates $z$ in $D$ we define 
integral operators and prove their basic mapping properties.

Let $R$ and $\mathcal{R}$ be the currents associated with a Hermitian resolution \eqref{karvkplx} of $\Om_X^p$
such that $E_0=T^*_{p,0}D$. The (full) Bochner-Martinelli form in $D_\zeta\times D_z$, where $\zeta$ and $z$ 
are the same local coordinates in $D$, is
\begin{equation*}
B=\sum_{j=1}^N \frac{1}{(2\pi i)^j}\frac{\partial |\zeta-z|^2\wedge (\debar\partial |\zeta-z|^2)^{j-1}}{|\zeta-z|^{2j}} 
\end{equation*}
and we let $B_j$ be the component of $B$ of bidegree $(j,j-1)$.
Let $H=H_0+H_1+\cdots$ be a holomorphic form in $D_\zeta\times D_z$ with values in $\text{Hom}(E,E_0)$,
where $H_j$ has bidegree $(j,0)$ and values in $\text{Hom}(E_j,E_0)$. Let
$g=g_0+g_1+\cdots$ be a smooth form in $D''_\zeta\times D'_z$, where $g_j$ has bidegree $(j,j)$
and $D', D''\subset D$. The forms $H$ and $g$ will be specified in the next section.

If $\tau$ is a current in $D_\zeta\times D_z$ we let $(\tau)_N$ be the component of bidegree $(N,*)$ in $\zeta$ and 
$(0,*)$ in $z$. Let $\vartheta(\tau)$ be the current defined by
\begin{equation*}
(\tau)_N=\vartheta(\tau)\wedge d\zeta.
\end{equation*}
Notice that in view of \eqref{nyttnr},  
\begin{equation*}
(g\wedge HR)_N=\vartheta(g\wedge H)\mathcal{R};
\end{equation*}
here and for the rest of this section, $R=R(\zeta)$ and $\mathcal{R}=\mathcal{R}(\zeta)$.
Similarly, outside the diagonal $\Delta\subset D_\zeta\times D_z$,
\begin{equation*}
(B\wedge g\wedge HR)_N=\vartheta(B\wedge g\wedge H)\mathcal{R}.
\end{equation*}

Let $\varphi\in\VV_X^{p,*}$ and let $\mu\in\W_X^{n-p,*}$. We give a meaning to 
\begin{equation}\label{Ppreldef}
\vartheta(g\wedge H)\mathcal{R}\wedge \varphi(\zeta)\wedge i_*\mu(z)
\end{equation}
as follows. By Proposition~\ref{absolut}, $\mathcal{R}=a\wedge i_*\omega_0$ where $a$ is almost semi-meromorphic and generically smooth
on $Z$. Therefore, by Lemma~\ref{rutat}, $\mathcal{R}\wedge\varphi:=a\wedge i_*(\varphi\wedge\omega_0)$ is a well-defined current in
$\W_D^{Z,*}$. Since $\mathcal{R}\wedge \varphi(\zeta)\wedge i_*\mu(z)$ exists as a tensor product and $\vartheta(g\wedge H)$ is smooth,
\eqref{Ppreldef} is defined. Notice that it is annihilated by both $\J(\zeta)$ and $\J(z)$, i.e., it is 
$\hol_X$-linear both in $\varphi$ and $\mu$.
Moreover, by \cite[Corollary~4.7]{AWreg} it is in $\PM_{D''\times D'}$,
has support in $Z\times Z$ and the SEP with respect to $Z\times Z$.

Let $\pi^i\colon D_\zeta\times D_z\to D$, $i=1,2$, be the natural projections on the first and second factor, respectively. 
If $\tau$ is a current in $D\times D$ such that $\pi^i$ is proper on the support of $\tau$, then 
$\pi^i_{*}\tau$ is a current in $D$. Moreover, in view of \eqref{SEPprojformel}, if $\tau\in\PM_{D\times D}$ has support in $Z\times Z$
and the SEP with respect to $Z\times Z$, then
$\pi^i_{*}\tau\in \PM_D$ has support in $Z$ and the SEP with respect to $Z$.

\begin{definition}[The operators $P$ and $\check P$]\label{Pdefs}
If $g$ is smooth in $D\times D'$ (i.e., $D''=D$) and $\zeta\mapsto g(\zeta,z)$ has support in a fixed 
compact subset of $D$ for all $z\in D'$, we define 
$P\colon \VV^{p,*}(X)\to \VV^{p,*}(X\cap D')$ by
\begin{equation}\label{kork}
i_* P\varphi\wedge\mu = \pi^2_{*} \big(\vartheta(g\wedge H)\mathcal{R}\wedge \varphi(\zeta)\wedge i_*\mu(z)\big), \quad
\varphi\in\VV^{p,*}(X), \,\,\mu\in\Ba^{n-p}(X\cap D').
\end{equation} 

\smallskip

If $g$ is smooth in $D''\times D$ (i.e., $D'=D$) and $z\mapsto g(\zeta,z)$ has support in a fixed compact subset of $D$ for all $\zeta\in D''$,
we define $\check P\colon \W^{n-p,*}(X)\to \W^{n-p,*}(X\cap D'')$ by
\begin{equation}\label{korka}
i_* \check P\mu = \pi^1_{*} \big(\vartheta(g\wedge H)\mathcal{R}\wedge i_*\mu(z)\big), \quad
\mu\in\W^{n-p,*}(X).
\end{equation}
\end{definition}

\smallskip

If $\zeta\mapsto g(\zeta,z)$ does not have compact support then $P\varphi$  is still well-defined by \eqref{kork} if 
$\varphi$ has compact support in $X$. Similarly, if $z\mapsto g(\zeta,z)$ does not have compact support then $\check P\mu$ 
is well-defined by \eqref{korka} if $\mu$ has compact support. 

Notice that $i_* P\varphi$ is a smooth $(p,*)$-form in $D'$ since $\vartheta(g\wedge H)\mathcal{R}$ is smooth in $z$ and takes values in $E_0$; 
if $g$ is holomorphic in $z$, then $i_* P\varphi$ is holomorphic. Moreover,
since $\mathcal{R}=\mathcal{R}(\zeta)$, it follows that $i_* \check P\mu=\psi\wedge\mathcal{R}$ for some smooth form $\psi$ in $D''$.

\smallskip

To define the operators $K$ and $\check K$ notice first that, in a similar way as for $P$ and $\check P$,
we can give a meaning to
\begin{equation}\label{Kpreldef}
\vartheta(B\wedge g\wedge H)\mathcal{R}\wedge \varphi(\zeta)\wedge i_*\mu(z)
\end{equation} 
outside the diagonal $\Delta\subset D\times D$ since $B$ is smooth there.

\begin{lemma}\label{yster}
The current \eqref{Kpreldef} has a unique extension to a current in $\PM_{D\times D}$ with support in $Z\times Z$
and the SEP with respect to $Z\times Z$. The extension is 
annihilated by both $\J(\zeta)$ and $\J(z)$.
\end{lemma}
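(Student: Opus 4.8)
The plan is to isolate the only non-smooth factor, the Bochner--Martinelli form $B$, to express it through an almost semi-meromorphic current whose singular support is the diagonal $\Delta$, and then to define and extend \eqref{Kpreldef} across $\Delta$ by the pseudomeromorphic calculus recalled in Section~\ref{prelim}.

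First I would set $u:=\mathcal{R}(\zeta)\wedge\varphi(\zeta)\wedge i_*\mu(z)$, so that \eqref{Kpreldef} equals $\vartheta(B\wedge g\wedge H)\,u$ off $\Delta$. Exactly as in the paragraph preceding \eqref{Ppreldef}, but dropping the smooth factor $\vartheta(g\wedge H)$, the current $u$ is well-defined by Lemma~\ref{rutat}, belongs to $\PM_{D\times D}$, has support in $Z\times Z$ and the SEP with respect to $Z\times Z$, and is annihilated by $\J(\zeta)$ and $\J(z)$; in particular $u\in\W_{Z\times Z}$. Thus the only issue is to give the product $\vartheta(B\wedge g\wedge H)\,u$ a meaning across $\Delta$.

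To do this I would use that, with $\sigma:=\partial|\zeta-z|^2/|\zeta-z|^2$, the components of $B$ can be written $B_j=(2\pi i)^{-j}\,\sigma\wedge(\debar\sigma)^{j-1}$, since the terms of $(\debar\sigma)^{j-1}$ carrying a second factor $\partial|\zeta-z|^2$ are killed by the leading $\sigma$. The current $\sigma$ is almost semi-meromorphic on $D_\zeta\times D_z$ with singular support contained in $\Delta$: after blowing up $\Delta$, its coefficients $\overline{\zeta_k-z_k}/|\zeta-z|^2$ pull back to semi-meromorphic currents (the antiholomorphic factor cancels against $|t|^2$, leaving a holomorphic denominator $t$). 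Hence $\vartheta(B\wedge g\wedge H)$ is a finite sum of smooth forms wedged with $\sigma$ and $\debar\sigma$, so that $\vartheta(B\wedge g\wedge H)\,u$ is built from iterated products of the almost semi-meromorphic $\sigma$, of $\debar\sigma$, and of $u$, governed by the almost semi-meromorphic calculus, cf.\ \eqref{ASMprod}, \cite[Theorem~4.8]{AWdirect}, and \cite[Corollary~4.7]{AWreg}. Concretely, I would define the extension by the regularization $T:=\lim_{\epsilon\to0}\chi(|\zeta-z|^2/\epsilon)\,\vartheta(B\wedge g\wedge H)\,u$ as in \eqref{ASMprod} with $V=\Delta$. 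Then $T\in\PM_{D\times D}$, agrees with \eqref{Kpreldef} off $\Delta$, has support in $Z\times Z$ (since $u$ does), and has the SEP with respect to $Z\times Z$ (since $u\in\W_{Z\times Z}$ and the calculus preserves the SEP). This is the main obstacle: controlling the antiholomorphic poles of the higher $B_j$, which is precisely what the factorization through $\sigma$ achieves.

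It then remains to settle uniqueness and the annihilation. The intersection $\Delta\cap(Z\times Z)=\{(x,x);\,x\in Z\}$ has codimension $n>0$ in the pure $2n$-dimensional space $Z\times Z$, hence meets it properly. If $T'\in\PM_{D\times D}$ has support in $Z\times Z$, the SEP with respect to $Z\times Z$, and agrees with \eqref{Kpreldef} off $\Delta$, then $T-T'\in\W_{Z\times Z}$ is supported in $\Delta$, so $T-T'=\mathbf{1}_\Delta(T-T')=0$ by the SEP, which gives uniqueness. Finally, for $h\in\J(\zeta)$ the current $hT\in\W_{Z\times Z}$ vanishes off $\Delta$ (because $hu=0$ and $\vartheta(B\wedge g\wedge H)$ is smooth there), hence is supported in $\Delta$ and equals $0$ by the same SEP argument; the case $h\in\J(z)$ is identical. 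Thus $T$ is annihilated by $\J(\zeta)$ and $\J(z)$, completing the proof.
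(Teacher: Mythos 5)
Your proposal is correct and follows essentially the same route as the paper: form the tensor product $\mathcal{R}(\zeta)\wedge\varphi(\zeta)\wedge i_*\mu(z)$, which lies in $\PM_{D\times D}$ with support in $Z\times Z$ and the SEP, multiply by the almost semi-meromorphic form $\vartheta(B\wedge g\wedge H)$ via \eqref{ASMprod}, and then get uniqueness and the annihilation by $\J(\zeta)$, $\J(z)$ from the SEP, since $\Delta$ meets $Z\times Z$ properly. The only difference is that you supply the (standard, and correct) verification that $B$ is almost semi-meromorphic via the factorization $B_j=(2\pi i)^{-j}\sigma\wedge(\debar\sigma)^{j-1}$ and a blow-up of the diagonal, a fact the paper simply asserts.
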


\begin{proof}
The uniqueness is clear by the SEP since \eqref{Kpreldef} a priori is defined in $D\times D\setminus\Delta$ and has support in
$Z\times Z\setminus \Delta$.

Recall that $\mathcal{R}\wedge\varphi(\zeta)\wedge i_*\mu(z)\in\PM_{D\times D}$ has support in $Z\times Z$ and the SEP
with respect to $Z\times Z$. Since $B$ is almost semi-meromorphic 
in $D\times D$, also $\vartheta(B\wedge g\wedge H)$ has these properties. Hence, cf.\ \eqref{ASMprod},
$\vartheta(B\wedge g\wedge H)\mathcal{R}\wedge\varphi(\zeta)\wedge i_*\mu(z)$ is in $\PM_{D\times D}$ with
support in $Z\times Z$ and the SEP with respect to $Z\times Z$.

Clearly $\J(\zeta)$ and $\J(z)$ annihilate \eqref{Kpreldef} outside $\Delta$. Since the extension 
has the SEP with respect to $Z\times Z$ it is annihilated by $\J(\zeta)$ and $\J(z)$.
\end{proof}

We will use the notation \eqref{Kpreldef} to denote the extension as well.
In view of the lemma it depends $\hol_X$-linearly on both $\varphi$ and $\mu$.

\begin{definition}[The operators $K$ and $\check K$]\label{Kdefs}
If $g$ is smooth in $D\times D'$ (i.e., $D''=D$) and $\zeta\mapsto g(\zeta,z)$ has support in a fixed compact 
subset of $D$ for all $z\in D'$, we define 
$K\colon \VV^{p,*}(X)\to \VV^{p,*}(X\cap D')$ by
\begin{equation*}
i_* K\varphi\wedge\mu = \pi^2_{*} \big(\vartheta(B\wedge g\wedge H)\mathcal{R}\wedge \varphi(\zeta)\wedge i_*\mu(z)\big), \quad
\varphi\in\VV^{p,*}(X), \,\,\mu\in\Ba^{n-p}(X\cap D').
\end{equation*} 

\smallskip

If $g$ is smooth in $D''\times D$ (i.e., $D'=D$) and $z\mapsto g(\zeta,z)$ has support in a fixed compact subset of $D$ for all $\zeta\in D''$,
we define $\check K\colon \W^{n-p,*}(X)\to \W^{n-p,*}(X\cap D'')$ by
\begin{equation*}
i_* \check K\mu = \pi^1_{*} \big(\vartheta(B\wedge g\wedge H)\mathcal{R}\wedge i_*\mu(z)\big), \quad
\mu\in\W^{n-p,*}(X).
\end{equation*}
\end{definition}

\smallskip

As with the operators $P$ and $\check P$, if $\varphi$ and $\mu$ have compact support in $X$, then $K\varphi$ and $\check K\mu$ are defined 
also when $\zeta\mapsto g(\zeta,z)$ and $z\mapsto g(\zeta,z)$, respectively, do not have compact support.

\begin{theorem}\label{knall}
(i) If $\varphi\in\V_X^{p,*}$ is in $\E_X^{p,*}$ in a neighborhood of a point $x\in \Xpreg$, then $K\varphi$ is in $\E_X^{p,*}$ in a neighborhood
of $x$.

(ii) Assume that $\mu\in\W_X^{n-p,*}$ is such that, in a neighborhood of $x\in\Xpreg$, $i_*\mu=\sum_\ell\mu_\ell\wedge i_*\omega_\ell$, 
where $\mu_\ell \in\E_D^{0,*}$ and $\omega_\ell\in \Ba_X^{n-p}$. Then $i_*\check K \mu$ is of  the same form in a neighborhood of $x$.
\end{theorem}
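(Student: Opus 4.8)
The plan is to work locally near the given point $x\in\Xpreg$, where $Z$ is smooth and all the modules are locally free, and to reduce $K$ and $\check K$ to classical Bochner--Martinelli type integral operators on the smooth manifold $Z$, whose interior regularity is standard. First I would choose local coordinates $(z,w)$ centered at $x$ with $Z=\{w=0\}$. Since $x\in\Xpreg$, the module $\Om_X^p$ is Cohen--Macaulay near $x$, so its minimal free resolution there has length $\kappa$; as the singularity varieties are resolution independent this forces $Z^p_{\kappa+1}=\emptyset$ near $x$, and hence, by \eqref{pinne2} and the support statements for the $R_j$, that $\mathcal{R}=\mathcal{R}_\kappa$ near $x$, which is $\debar$-closed. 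By Proposition~\ref{absolut} we may then write $\mathcal{R}_\kappa=a_0\cdot i_*\omega_0$ with $a_0$ smooth near $x$, and by \eqref{grymta}--\eqref{klippning} the current $\mathcal{R}(\zeta)$ has a finite representation as a sum of terms (holomorphic $n$-form in $z_\zeta$)$\,\wedge\,\debar(dw_\zeta/w_\zeta^{\alpha+\mathbf{1}})$, the finiteness coming from the Nullstellensatz as in Proposition~\ref{ulv}.

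For part (i), since $\varphi\in\E_X^{p,*}$ near $x$, Proposition~\ref{hard} together with Lemma~\ref{smirnoff} gives $i_*\varphi\wedge\mu=\sum_k\varphi_k\wedge b_k\wedge i_*\mu$ with \emph{smooth} coefficients $\varphi_k\in\E_Z^{0,*}$. Substituting this and the explicit form of $\mathcal{R}$ into the defining formula for $K$, the integrand $\vartheta(B\wedge g\wedge H)\mathcal{R}(\zeta)\wedge\varphi(\zeta)\wedge i_*\mu(z)$ becomes a tensor product in which the entire $w_\zeta$-dependence resides in the residue factors $\debar(dw_\zeta/w_\zeta^{\alpha+\mathbf{1}})$. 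The pushforward $\pi^2_*$ then splits into two steps: integration in the $w_\zeta$-directions against these residue factors, which by \eqref{klippning} amounts to taking finitely many $w_\zeta$-derivatives of the smooth kernel at $w_\zeta=0$; and integration over the smooth $n$-dimensional $Z_\zeta$, with a kernel that is smooth off the diagonal of $Z\times Z$ and carries precisely the Bochner--Martinelli singularity there. Applied to the smooth data $\varphi_k$, this is a classical Bochner--Martinelli integral operator and therefore yields a form that is smooth in $z$ on $Z_z$. To conclude that $K\varphi$ lies in $\E_X^{p,*}$ and not merely in $\V_X^{p,*}$, I would invoke the characterization of Lemma~\ref{smirnoff} and Proposition~\ref{rutschkana}: it suffices that the components $\pi_*\big((K\varphi)\wedge w^\beta\mu_j\big)$ be smooth, and these are precisely of the form just analyzed.

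Part (ii) is entirely analogous, with the roles of $\zeta$ and $z$, and of the two projections, interchanged. Writing $i_*\mu=\sum_\ell\mu_\ell\wedge i_*\omega_\ell$ with $\mu_\ell\in\E_D^{0,*}$ and $\omega_\ell\in\Ba_X^{n-p}$, and using the representation \eqref{grymta} of each $i_*\omega_\ell(z)$, the pushforward $\pi^1_*$ now integrates out the $z$-variable: the $w_z$-directions are handled by the residue factors as before and the base integration over the smooth $Z_z$ is again a Bochner--Martinelli integral, applied to the smooth $\mu_\ell$. Since $\mathcal{R}=\mathcal{R}(\zeta)$ is independent of $z$ it factors out of the $z$-integration, so $i_*\check K\mu$ takes the form $\psi(\zeta)\wedge\mathcal{R}(\zeta)$; the smoothness of the coefficient $\psi$ is exactly what the Bochner--Martinelli regularity of the $z$-integration provides, and then $\psi\wedge\mathcal{R}_\kappa=\psi a_0\cdot i_*\omega_0$ exhibits $i_*\check K\mu$ as a sum $\sum\mu'_{\ell'}\wedge i_*\omega_{\ell'}$ with smooth coefficients, as required.

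The step I expect to be the main obstacle is the one where the $w$-directions are integrated against the residue factors while the Bochner--Martinelli kernel is singular along the \emph{full} diagonal of $D\times D$: one must check that after this reduction the surviving kernel on $Z\times Z$ has exactly the singularity of the intrinsic Bochner--Martinelli form of the $n$-dimensional $Z$, so that the classical interior-regularity theory applies. This is where the specific choice of $g$ and $H$ from the preceding section enters, and where the Cohen--Macaulay and purity hypotheses on $\Xpreg$ are used, both to guarantee that $a_0$ is smooth and that the higher terms $\mathcal{R}_{\kappa+j}$ do not contribute near $x$.
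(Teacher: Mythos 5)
Your strategy is, at the top level, the same as the paper's: localize at $x\in\Xpreg$, use the free module structure and the representation \eqref{grymta} to reduce the push-forward to an integral over the smooth manifold $Z$, and verify smoothness of the resulting coefficients via the criterion of Lemma~\ref{smirnoff}/Proposition~\ref{rutschkana}. Two steps are missing but repairable. First, the localization: $K$ and $\check K$ integrate over all of $\text{supp}\, g$, where $\varphi$ is merely in $\V_X^{p,*}$ (resp.\ $\mu\in\W_X^{n-p,*}$) and $Z$ may be singular, so before using the smooth local representation you must split off the part of the datum supported away from $x$ and observe that it contributes a smooth form (resp.\ a smooth form times $\mathcal{R}$) near $x$ because $B$ is smooth off the diagonal; the paper does exactly this and then assumes small support. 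Second, your ``two-step push-forward'' is not automatic: the current $\vartheta(B\wedge g\wedge H)\mathcal{R}\wedge\varphi(\zeta)\wedge i_*\mu(z)$ is only \emph{defined} as the unique SEP-extension across the diagonal (Lemma~\ref{yster}), so one must justify that it agrees with the limit of the cut-off currents $\chi(|\zeta-z|^2/\epsilon)(\cdots)$ before one may integrate out the transversal variables first; the paper handles this with the regularized operators $K^\epsilon$ and the SEP of $T$.

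The genuine gap is the step you yourself flag as ``the main obstacle'' and then leave unproved. After pairing $B_j$ with the residue factors $\debar(d\tau/\tau^{\beta+\mathbf{1}})$ and extracting the coefficients one must apply the transversal derivatives $\partial^{|\alpha|}/\partial w^\alpha$ to the kernel and evaluate at $\tau=w=0$. The resulting kernel on $Z\times Z$ is \emph{not} the classical Bochner--Martinelli kernel of the $n$-dimensional $Z$: each transversal derivative a priori worsens the singularity of $B_j=O(|\zeta-z|^{1-2j})$ by one order, and since $j$ ranges up to $n$ (this is where the degree bookkeeping with $H_\kappa+H_{\kappa+1}+\cdots$ enters) the naive bound gives a kernel that need not even be locally integrable on $Z\times Z$ once $|\alpha|+|\beta|\geq 1$. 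Classical interior-regularity theory therefore does not apply off the shelf; one has to exploit cancellations coming from the specific algebraic structure of $B$ (for instance that holomorphic transversal derivatives of $|\zeta-z|^2$ vanish at $\tau=w=0$). This estimate is precisely the content of \cite[Proposition~10.5]{AL}, in particular (10.5) there, which is what the paper's proof invokes at this point for both (i) and (ii); without it, or an equivalent analysis, the argument is incomplete. (A minor further caveat: your claim that $\mathcal{R}=\mathcal{R}_\kappa$ near $x$ is not needed and is not what the paper uses; what is used is only that $\mathcal{R}=a\cdot i_*\omega_0$ with $a$ smooth on $\Xpreg$, from Proposition~\ref{absolut}.)
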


Recall that, by Proposition~\ref{hard}, in a neighborhood of $x\in\Xpreg$, any $\phi\in\V_X^{p,*}$
is represented by $\sum_k \phi_k\wedge b_k$ for some $\phi_k\in\W_Z^{0,*}$. That $\phi\in\V_X^{p,*}$
is smooth means, cf.\ Lemma~\ref{smirnoff}, that $\phi_k\in\E_Z^{0,*}$. 
In view of this it is natural to call a $\mu\in\W_X^{n-p,*}$ with the property in (ii) smooth. Analogously to part (i), part (ii) 
of the theorem thus says 
that $\check K$ preserves the smooth elements of $\W_X^{n-p,*}$. 

\begin{proof}
Notice that if $\varphi=\varphi(\zeta)\equiv 0$ in a neighborhood of $x$,
then $K\varphi$
is smooth in a neighborhood of $x$ since in that case $\vartheta(B\wedge g\wedge H)\mathcal{R}\wedge\varphi$ is smooth for $z$ in 
a neighborhood of $x$. To prove the first part of the theorem we may thus assume that $\varphi$ has support in a small neighborhood of $x$.

Let $(z,w)$ and $(\zeta,\tau)$ be two sets of the same local coordinates in $D$ centered at $x$ such that $Z=\{w=0\}=\{\tau=0\}$ 
in a neighborhood of $x$;
these coordinates need not have any relation to our previous local coordinates which were used to define $B$.
Suppose that $\varphi$ has support where the coordinates $(z,w)$ are defined. 
Let $\chi^\epsilon:=\chi(|\zeta-z|^2/\epsilon)$ and let, for any $\mu\in\W_X^{n-p,*}$,
\begin{equation}\label{Tee}
T:=\vartheta(B\wedge g\wedge H)\mathcal{R}\wedge\varphi(\zeta,\tau)\wedge i_*\mu(z,w).
\end{equation} 
Then, in view of  \eqref{tabasco},
\begin{equation*}
\lim_{\epsilon\to 0} \chi^\epsilon T= 
\mathbf{1}_{D\times D\setminus \{\zeta=z\}} T.
\end{equation*}
By Lemma~\ref{yster}, $T$ has the SEP with respect to $Z\times Z$ and so, since $\{\zeta=z\}\cap Z\times Z$ is a proper subset of $Z\times Z$,
$\mathbf{1}_{\{\zeta=z\}}T=0$. Hence, $\mathbf{1}_{D\times D\setminus \{\zeta=z\}} T=T$ and thus $\chi^\epsilon T\to T$.
Define $K^\epsilon\varphi$ by
\begin{equation}\label{Te}
K^\epsilon\varphi:=\pi^2_{*} \big(\chi^\epsilon\vartheta(B\wedge g\wedge H)\mathcal{R}\wedge\varphi(\zeta,\tau)\big).
\end{equation}
Then $K^\epsilon\varphi$ is smooth since $\chi^\epsilon\vartheta(B\wedge g\wedge H)\mathcal{R}\wedge\varphi(\zeta,\tau)$ is smooth in $(z,w)$ 
and it follows that
\begin{equation}\label{oxe}
K^\epsilon \varphi\wedge i_*\mu = \pi^2_{*}  (\chi^\epsilon T) \to \pi^2_{*}T = i_* K\varphi\wedge\mu
\end{equation}
as currents in $D'$.

By Lemma~\ref{smirnoff} there are $\phi_k^\epsilon\in\E_Z^{0,*}$ such that 
\begin{equation*}
K^\epsilon\varphi=\sum_k\phi_k^\epsilon\wedge b_k + \Kernel_p i^*
\end{equation*}
and by the proof of that lemma $\phi_k^\epsilon$ are obtained by applying linear combinations of 
$\partial^{|\alpha|}/\partial w^\alpha$ to (the coefficients)
of $K^\epsilon\varphi$ and evaluate at $w=0$. We claim that there are $\phi_k\in\E_Z^{0,*}$ such that 
$\phi_k^\epsilon\to\phi_k$ as currents on $Z$. 

Given the claim we can conclude the proof of the first part of the theorem. Let $\mu\in\Ba_X^{n-p}$. Then $b_k\wedge i_*\mu\in\CH_D^{Z}$ and so,
in view of \eqref{klippning} and the representation \eqref{grymta} of $b_k\wedge i_*\mu$, there are $a_{k,\alpha}(z)\in\Om_Z^n$ such that 
$b_k\wedge i_*\mu=\sum_\alpha a_{k,\alpha}(z)\wedge \debar(dw/w^{\alpha+\mathbf{1}})$. Hence,
\begin{eqnarray*}
K^\epsilon\varphi\wedge i_*\mu &=& \sum_k\phi_k^\epsilon\wedge b_k\wedge i_*\mu
=\sum_{k,\alpha} \phi_k^\epsilon(z)\wedge a_{k,\alpha}(z)\wedge \debar\frac{dw}{w^{\alpha+\mathbf{1}}} \\
&\to & 
\sum_{k,\alpha} \phi_k(z)\wedge a_{k,\alpha}(z)\wedge \debar\frac{dw}{w^{\alpha+\mathbf{1}}}
= \sum_k \phi_k\wedge b_k \wedge i_*\mu
\end{eqnarray*}
as currents in $D'$. In view of \eqref{oxe}, thus
\begin{equation*}
i_* K\varphi\wedge\mu = \sum_k \phi_k\wedge b_k \wedge i_*\mu,
\end{equation*} 
which means that $K\varphi\in\V_X^{p,*}$ is smooth.

To show the claim, notice that since $\mathcal{R}=\mathcal{R}_\kappa + \mathcal{R}_{\kappa+1}+\cdots$
we can replace $H$ in \eqref{Te} by $H_\kappa + H_{\kappa+1}+\cdots$. Hence, only $B_j$ with $j\leq N-\kappa=n$
contribute in \eqref{Te}. In view of Proposition~\ref{absolut}, $\mathcal{R}=a\cdot i_*\omega_0$, where $a$ is smooth 
on $\Xpreg$ and $\omega_0\in\Ba_X^{n-p}$. Since $i_*\omega\in\Hom_{\hol_D}(\Om_D^p,\CH_D^Z)$, cf.\ \eqref{grymta},
it follows that $K^\epsilon\varphi$ is a sum of terms of the form
\begin{equation}\label{snaps}
\pi^2_{*}\big( \chi^\epsilon B_j\wedge\phi(\zeta,\tau,z,w)\wedge \debar\frac{d\tau}{\tau^{\beta+\mathbf{1}}}\big),
\end{equation}
where $j\leq n$ and $\phi$ is smooth with support in a neighborhood of $(\zeta,\tau)=x$.
It is proved in \cite[Proposition~10.5]{AL}, cf.\ in particular \cite[Equation~(10.5)]{AL}, 
that after applying $\partial^{|\alpha|}/\partial w^\alpha$
to a term \eqref{snaps} and evaluating at $w=0$ the limit as $\epsilon\to 0$ is smooth in $z$. The claim thus follows.

\smallskip

The proof of part (ii) of the theorem is similar. First notice that if $\mu\equiv 0$ in a neighborhood of 
$x$, then $i_*\check K\mu$ equals $\mathcal{R}$ times a smooth form in a neighborhood of $x$. 
Since $\mathcal{R}=a\cdot i_*\omega_0$, where $a$ is smooth on $\Xpreg$, the second part follows in this case. 
We can thus assume that $\mu$ has support in a small neighborhood of $x$.

Let $T$ be given by \eqref{Tee} with $\varphi=1$. As above it follows that $\chi^\epsilon T\to T$.
Set 
\begin{equation*}
u^\epsilon:=\pi^1_{*} \big(\chi^\epsilon \vartheta(B\wedge g\wedge H)\wedge i_*\mu(z)\big). 
\end{equation*}
Then $u^\epsilon$ is smooth and it follows that
\begin{equation}\label{regn}
u^\epsilon\wedge\mathcal{R}\to i_*\check K\mu
\end{equation}
as currents in $D''$. As in the proof of Lemma~\ref{smirnoff} we have
\begin{equation*}
u^\epsilon(\zeta,\tau)=\sum_{|\beta|<M} \frac{\partial^{|\beta|}u^\epsilon}{\partial \tau^\beta}(\zeta,0)\frac{\tau^\beta}{\beta !}
+\mathcal{O}(|\tau|^M,\bar{\tau},d\bar{\tau}),
\end{equation*}
where $\mathcal{O}(|\tau|^M,\bar{\tau},d\bar{\tau})$ is a sum of terms which are either $\mathcal{O}(|\tau|^M)$ or divisible
by some $\bar{\tau}_j$ or $d\bar{\tau}_j$. Since $\mathcal{O}(|\tau|^M,\bar{\tau},d\bar{\tau})\wedge\mathcal{R}=0$,
if there are $u_\beta(\zeta)\in\E_X^{0,*}$ such that $\partial^{|\beta|}u^\epsilon(\zeta,0)/\partial \tau^\beta\to u_\beta(\zeta)$
as current on $Z$, it follows as above that 
\begin{equation*}
u^\epsilon\wedge\mathcal{R}\to \sum_{|\beta|<M} u_\beta(\zeta)\tau^\beta\wedge i_*\omega_0/\beta !
\end{equation*}
as currents in $D''$. Thus, by \eqref{regn}, $i_*\check K\mu$ has the desired form. To see that there are such $u_\beta$, 
notice that if $i_*\mu=\sum_\ell \mu_\ell\wedge i_*\omega_\ell$ then $u^\epsilon$ is a sum of terms 
\begin{equation*}
\pi^1_{*}\big(\chi^\epsilon B_j\wedge\phi(\zeta,\tau,z,w)\wedge \debar\frac{dw}{w^{\alpha+\mathbf{1}}}\big),
\end{equation*}
where $j\leq n$ and $\phi$ is smooth, cf.\ \eqref{snaps} and the preceding argument. 
The existence of such $u_\beta$ thus follows as above. 
\end{proof}

The following lemma will be used in the next section. It can be proved along the same lines as \cite[Lemma~9.5]{AL}.
We remark that the latter lemma 
is formulated and proved in terms of a $\lambda$-regularization of $R$. 
However, in view of \cite[Lemma~6]{LS}, $\lambda$-regularization can be replaced by $\epsilon$-regularizations.

\begin{lemma}\label{vittling}
Let $R^\epsilon:=\debar \chi(|F|^2/\epsilon)\wedge u$, cf.\ \eqref{tangent}, and let $\mathcal{R}^\epsilon:=R^\epsilon \otimes d\zeta$.
Then
\begin{equation*}
\lim_{\epsilon\to 0} \mathcal{R}(z)\wedge\vartheta(B\wedge g\wedge H)\mathcal{R}^\epsilon =
\mathcal{R}(z)\wedge\vartheta(B\wedge g\wedge H)\mathcal{R},
\end{equation*}
where the right-hand side is the product of the almost semi-meromorphic current $\vartheta(B\wedge g\wedge H)$ 
by the tensor product $\mathcal{R}(z)\wedge\mathcal{R}$,
cf.\ Lemma~\ref{yster}. 
\end{lemma}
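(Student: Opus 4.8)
The plan is to reduce the statement to \cite[Lemma~9.5]{AL}, which proves the analogous fact for $p=0$ in terms of a $\lambda$-regularization, and then to check that neither the passage to general $p$ nor the replacement of the $\lambda$- by the $\epsilon$-regularization introduces anything genuinely new. Throughout I would write $A:=\vartheta(B\wedge g\wedge H)$, which by the discussion preceding Lemma~\ref{yster} is almost semi-meromorphic on $D\times D$ with singular support contained in the diagonal $\{\zeta=z\}$. By Lemma~\ref{yster} and \eqref{ASMprod} both sides of the claimed identity are products of the almost semi-meromorphic current $A$ with a pseudomeromorphic current in $(z,\zeta)$: on the left this current is the tensor product $\mathcal{R}(z)\otimes\mathcal{R}^\epsilon(\zeta)$, which is smooth in $\zeta$ since $R^\epsilon=\debar\chi(|F|^2/\epsilon)\wedge u$ has smooth coefficients (its support avoids $Z$), while on the right it is $\mathcal{R}(z)\otimes\mathcal{R}(\zeta)$. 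Since $\mathcal{R}^\epsilon\to\mathcal{R}$ as currents, the essential content is that multiplication by the almost semi-meromorphic factor $A$ commutes with this particular limit — something that is false for arbitrary weak limits, which is exactly the point to be argued.

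First I would dispose of the two formally independent reductions. For the dependence on $p$, I would note that the only properties of $R$ used in the proof of \cite[Lemma~9.5]{AL} are that $R=R_\kappa+R_{\kappa+1}+\cdots$ is pseudomeromorphic with the SEP with respect to $Z$, that $fR=\debar R$, and the factorizations $\mathcal{R}_\kappa=a_0\cdot i_*\omega_0$ and $\mathcal{R}_{\kappa+j}=a_j\mathcal{R}_{\kappa+j-1}$ into almost semi-meromorphic factors furnished by Proposition~\ref{absolut}. All of these hold for the residue current of any Hermitian resolution of $\Om_X^p$; the extra holomorphic factor $d\zeta$ and the bundle $E_0=T^*_{p,0}D$ are inert in the convergence argument. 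For the dependence on the regularization, I would invoke \cite[Lemma~6]{LS}, which lets one replace the $\lambda$-regularization used in \cite{AL} by the $\epsilon$-regularization $R^\epsilon$ throughout such limits; hence it suffices to prove the $\lambda$-version.

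With these reductions in place, the proof runs exactly as in \cite[Lemma~9.5]{AL}. The mechanism I would follow is to absorb the almost semi-meromorphic factors of $A$ and of $\mathcal{R}(z)\otimes\mathcal{R}(\zeta)$ (supplied by Proposition~\ref{absolut}) into a single almost semi-meromorphic current multiplying a clean tensor product of Coleff--Herrera--type currents, and then to resolve the singularities of this almost semi-meromorphic current by a modification $\pi$ of $D\times D$. On the resolved space all factors become monomial, the regularized products become tensor products of elementary one-variable currents and their $\lambda$-regularizations, and the convergence reduces to the standard one-variable fact that analytic continuation in $\lambda$ commutes with multiplication by the monomial that produces the almost semi-meromorphic factor. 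Pushing forward by $\pi_*$ and using \eqref{SEPprojformel} then recovers the identity downstairs.

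The hard part will be controlling the interaction between the diagonal singularity $\{\zeta=z\}$ of $A$ and the supports of the two residue factors, and this is precisely why the almost semi-meromorphic machinery cannot be bypassed: on the support $Z\times Z$ of $\mathcal{R}(z)\otimes\mathcal{R}(\zeta)$ the diagonal cuts out the $n$-dimensional subvariety $\{z=\zeta\in Z\}$, which has positive codimension. The limits are therefore governed by the SEP, which forces the product to be determined off this diagonal locus, together with the dimension principle, which annihilates the a priori possible contributions carried by it. This is exactly what makes the value at $\lambda=0$ — equivalently the limit as $\epsilon\to 0$ — agree with the almost semi-meromorphic product $\mathcal{R}(z)\wedge\vartheta(B\wedge g\wedge H)\mathcal{R}$ on the right-hand side.
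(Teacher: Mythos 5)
Your proposal is correct and follows essentially the same route as the paper, which itself only remarks that the lemma ``can be proved along the same lines as [AL, Lemma~9.5]'' and that $\lambda$-regularization can be replaced by $\epsilon$-regularization via [LS, Lemma~6]. Your additional verification that the passage from $p=0$ to general $p$ only uses properties of $\mathcal{R}$ already established (pseudomeromorphicity, the SEP, $f\mathcal{R}=\debar\mathcal{R}$, and the factorizations of Proposition~\ref{absolut}) is exactly the implicit content of the paper's citation, so nothing genuinely new is needed.
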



\section{Koppelman formulas and the sheaves $\A_X^{p,*}$ and $\Bsheaf_X^{n-p,*}$}\label{AoBsektion}
We assume now that $i\colon X\to D\subset\C^N$ is a local embedding into a pseudoconvex open set.
Let $z$ and $\zeta$ be two sets of the same local coordinates in $D$ and let $B$ be the corresponding
Bochner--Martinelli form. We choose $g$ and $H$ in the definition of the integral operators 
of Section~\ref{intop} to be a \emph{weight}, in the sense of \cite[Section~2]{AintrepII}, and a \emph{Hefer morphism},
in the sense of \cite[Section~5]{AW1} and \cite[Proposition~5.3]{AintrepII}, respectively.

\begin{example}[Example~2 in \cite{AintrepII}]\label{viktigt}
Let $D'\Subset D$ and assume that $\overline{D}'$ is holomorphically convex.
Let $\chi$ be a cutoff function in $D$ such that $\chi=1$ in a neighborhood of $\overline{D}'$.
One can find a smooth $(1,0)$-form $s(\zeta,z)=\sum_j s_j(\zeta,z)d(\zeta_j-z_j)$, defined for $\zeta$ in a neighborhood of
$\text{supp}\,\debar\chi$ and $z$ in a neighborhood of $\overline{D}'$, such that $2\pi i\sum_j (\zeta_j-z_j)s_j(\zeta,z)=1$
and $z\mapsto s(\zeta,z)$ is holomorphic. Then
\begin{equation*}
g=\chi(\zeta) - \debar\chi(\zeta)\wedge\sum_{k=1}^{N} s(\zeta,z)\wedge (\debar s(\zeta,z))^{k-1}
\end{equation*}
is a weight with compact support in $D_\zeta$, depends holomorphically on $z$ in a neighborhood of $\overline{D}'$, and
contains no $d\bar{z}_j$.

If $D'$ is the unit ball we can take $s(\zeta,z)=\sum_j(\bar{\zeta}_j-\bar{z}_j)d(\zeta_j-z_j)/2\pi i(|\zeta|^2-z\cdot \bar{\zeta})$ 
\end{example}

Let $\omega$ be an $(n-p)$-structure form on $X$ and recall, see \eqref{strukturform}, that $i_*\omega=\mathcal{R}$
for $\mathcal{R}$ associated to a Hermitian resolution \eqref{karvkplx}.
Then by Proposition~\ref{absolut}, $i_*\omega=a\cdot i_*\omega_0$ for some tuple 
$\omega_0$ of elements in $\Ba_X^{n-p}$ and a matrix of almost semi-meromorphic currents $a$ which is smooth on $\Xpreg$.
In view of Lemma~\ref{rutat} it follows that if $\varphi\in\V_X^{p,*}$, then $\varphi\wedge\omega$ is well-defined in $\W_X^{n,*}$.

\begin{definition}\label{Dom}
If $\varphi\in\V_X^{p,*}$ we say that $\varphi\in\text{Dom}\,\debar_X$ if
$\debar(\varphi\wedge\omega)\in\W_X^{n,*}$ for any $(n-p)$-structure form $\omega$ on $X$.
\end{definition}

Let us point out a few consequences. 
We can define $\debar\colon \text{Dom}\,\debar_X \to \V_X^{p,*}$ as follows.
Let  $\mu\in\Ba_X^{n-p}$. In view of \eqref{barlet-bjork}, since the map \eqref{bibblan} is an isomorphism,
there is a current $\mathcal{R}$ and a holomorphic $E^*$-valued function $\xi$ such that $i_*\mu=\xi\cdot\mathcal{R}$.
Thus, by \eqref{strukturform} there is an $(n-p)$-structure form $\omega$ such that $\mu=i^*\xi\cdot\omega$.
If $\varphi\in\text{Dom}\,\debar_X$ 
it follows that $\debar(\varphi\wedge\mu)\in\W_X^{n,*}$.
Hence, for $\varphi\in\text{Dom}\,\debar_X$ we can define $\debar\varphi\in \V_X^{p,*}$ by
\begin{equation*}
\debar\varphi\wedge\mu := \debar (\varphi\wedge\mu),\quad  \mu\in\Ba_X^{n-p}.
\end{equation*}

Since $\debar\varphi\in\V_X^{p,*}$ if $\varphi\in\text{Dom}\,\debar_X$ it follows as in the paragraph preceding 
Definition~\ref{Dom} that $\debar\varphi\wedge\omega$ is well-defined in $\W_X^{n,*}$ for any $(n-p)$-structure form $\omega$.
Moreover, if as above $i_*\omega=\mathcal{R}=a\cdot i_*\omega_0$, where $\mathcal{R}$ is associated to the Hermitian resolution \eqref{karvkplx},
then 
\begin{equation}\label{telefjomp}
\debar\varphi\wedge\omega=-\nabla_f (\varphi\wedge\omega),
\end{equation}
where $\nabla_f=f-\debar$.
In fact, by Lemma~\ref{nissen}, $f a\cdot i_*\omega_0=\debar (a\cdot i_*\omega_0)$ and so, since $a$ is smooth on $\Xpreg$,
in view of Lemma~\ref{rutat}, we get
\begin{eqnarray*}
-\nabla_f (\varphi\wedge\omega) &=& \debar(\varphi\wedge i^*a\cdot\omega_0) - f(\varphi\wedge i^*a\cdot\omega_0) \\
&=&
\pm i^*a\cdot \debar(\varphi\wedge\omega_0) \pm \varphi\wedge\debar (i^*a\cdot\omega_0) \mp \varphi\wedge fi^*a\cdot\omega_0 \\
&=&
\pm i^*a\cdot \debar(\varphi\wedge\omega_0) = \debar\varphi\wedge i^*a\cdot\omega_0 = \debar\varphi \wedge\omega
\end{eqnarray*}
on $\Xpreg$. Since both sides of \eqref{telefjomp} have the SEP, \eqref{telefjomp} holds everywhere.

We also notice that  
\begin{equation}\label{fredag}
\E_X^{p,*}\subset\text{Dom}\,\debar_X.
\end{equation}
This follows since, as above, any $(n-p)$-structure form $\omega$ satisfies $\debar\omega=f\omega$ for an appropriate $f$
and hence, if $\varphi\in\E_X^{p,*}$, $\debar(\varphi\wedge\omega)=\debar\varphi\wedge\omega\pm \varphi\wedge f\omega\in\W_X^{n,*}$.


\begin{proposition}\label{kolja}
Let $D'\Subset D$ be a relatively compact open subset and set $X'=X\cap D'$.
There are integral operators
\begin{equation*}
K\colon \E^{p,*+1}(X)\to\V^{p,*}(X')\cap\text{Dom}\,\debar_X, \quad
P\colon \E^{p,*}(X)\to\E^{p,*}(X')
\end{equation*}
such that for any $\varphi\in\E^{p,*+1}(X)$,
\begin{equation}\label{ortodox}
\varphi=\debar K\varphi + K\debar\varphi + P\varphi.
\end{equation}

If $\varphi\in\E^{p,*+1}(X)$ has compact support in $X$ one can choose $K$ and $P$ such that, additionally,  $K\varphi$ and $P\varphi$
have compact support in $X$.
\end{proposition}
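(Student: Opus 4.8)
The plan is to derive \eqref{ortodox} by pushing forward, via $\pi^2_*$, a Koppelman-type identity on the product $D_\zeta\times D_z$ and then letting the natural regularizations converge. Since \eqref{ortodox} is an identity in $\V^{p,*}(X')$, it suffices by the definition of $\wedge\mu$ to prove, for every $\mu\in\Ba^{n-p}(X')$, the identity of currents in $D'$
\begin{equation*}
\debar\big(i_*(K\varphi\wedge\mu)\big)+i_*(K\debar\varphi\wedge\mu)+i_*(P\varphi\wedge\mu)=\varphi\wedge i_*\mu,
\end{equation*}
where by Definitions~\ref{Pdefs} and \ref{Kdefs} the left-hand terms are the $\pi^2_*$-pushforwards of $\vartheta(B\wedge g\wedge H)\mathcal{R}$ and $\vartheta(g\wedge H)\mathcal{R}$, wedged with $\varphi(\zeta)\wedge i_*\mu(z)$, and $\debar_z$ commutes with $\pi^2_*$. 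Proving this identity for all $\mu$ will simultaneously show, since its right-hand side lies in $\W_X^{n,*}$, that $\debar\big(i_*(K\varphi\wedge\mu)\big)\in\W_X^{n,*}$, hence that $K\varphi\in\text{Dom}\,\debar_X$ and that $\debar K\varphi$ agrees with the first term; there is thus no circularity in the appearance of $\debar K\varphi$.

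The engine is the fundamental identity of the weight/Hefer calculus of \cite{AW1,AintrepII}. With $\nabla=\delta_{\zeta-z}-\debar$, the weight satisfies $\nabla g=0$ and $g_0=\mathrm{Id}$ on the diagonal $\Delta$, the Hefer morphism is holomorphic with $H_0=\mathrm{Id}$ and $\delta_{\zeta-z}H=f(\zeta)H-Hf(z)$, and the Bochner--Martinelli form satisfies $\nabla B=1-[\Delta]$. Combining this with $\debar\mathcal{R}=f\mathcal{R}$ from Lemma~\ref{nissen}, the $f(\zeta)$-terms telescope against $\debar\mathcal{R}$, and one obtains, after extracting the $(N,*)$-component with $\vartheta$, that $\nabla$ applied to the $K$-kernel equals the $P$-kernel minus a diagonal term which, because $g_0=H_0=\mathrm{Id}$ on $\Delta$, reproduces $\varphi$ upon pushforward. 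Wedging with $\varphi(\zeta)\wedge i_*\mu(z)$ and applying $\pi^2_*$ turns the $\debar$ in $\nabla$ into $\debar_z$ of the $K$-pushforward together with a $\debar_\zeta$ that, by integration by parts and $\debar$-closedness of $\mu$, lands on $\varphi$ and yields $K\debar\varphi$; the interior-multiplication part $\delta_{\zeta-z}$ contributes nothing after $\pi^2_*$ for bidegree reasons.

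To make the manipulations rigorous---both $B$ and $\mathcal{R}$ being singular---I would regularize twice: cut the diagonal by $\chi^\epsilon=\chi(|\zeta-z|^2/\epsilon)$ and replace $\mathcal{R}$ by the smooth $\mathcal{R}^\epsilon=R^\epsilon\otimes d\zeta$ of Lemma~\ref{vittling}, reducing the above to Stokes' theorem for smooth forms. Letting $\epsilon\to0$, the products converge by Lemma~\ref{vittling}, and, exactly as in the proof of Theorem~\ref{knall}, the SEP of the kernel (Lemma~\ref{yster}) gives $\chi^\epsilon T\to T$, so the cut-off diagonal term converges and delivers $\varphi\wedge i_*\mu$. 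This yields \eqref{ortodox}. Finally, $P\varphi\in\E^{p,*}(X')$ because its ambient representative $\vartheta(g\wedge H)\mathcal{R}$ is smooth in $z$, as noted after Definition~\ref{Pdefs}; and for the compact-support statement one takes the weight $g$ of Example~\ref{viktigt} so that, when $\varphi$ is compactly supported, both the $\zeta$-integration and the resulting supports in $z$ remain compact.

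The main obstacle is precisely the limit $\epsilon\to0$, where the two singular objects $\mathcal{R}$ and $B$ are multiplied near $Z\cap\Delta$: one must check that the regularized products converge to the intended almost semi-meromorphic, respectively pseudomeromorphic, products, and that no spurious residue contribution survives along the diagonal. This is what Lemma~\ref{vittling} and the SEP (through \cite[Proposition~10.5]{AL}, already used for Theorem~\ref{knall}) are designed to control, so the crux is to assemble these convergences consistently with the $\vartheta$-extraction and the pushforward, keeping track of the structure-form factorization $\mathcal{R}=a\cdot i_*\omega_0$ of Proposition~\ref{absolut}.
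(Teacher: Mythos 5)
Your overall architecture is the paper's: the Koppelman identity is produced on $D_\zeta\times D_z$ from the weight/Hefer calculus together with $\debar\mathcal{R}=f\mathcal{R}$, regularized by $\mathcal{R}^\epsilon$ and passed to the limit via Lemma~\ref{vittling} (cf.\ \eqref{basta}--\eqref{makrill}), then pushed forward by $\pi^2_*$; the compact-support variant is obtained by choosing the weight with compact support in $z$. So the route is essentially the one the paper takes, and the identity \eqref{ortodox}, read against elements of $\Ba_X^{n-p}$ as in \eqref{snabeltand}, does come out of your computation (note that in your ``direct $\mu$'' formulation the surviving $f_1(z)H$-term is killed because $\J^p$ annihilates $i_*\mu$).

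There is, however, one genuine gap: your claim that proving the identity for every $\mu\in\Ba^{n-p}(X')$ ``simultaneously shows'' that $K\varphi\in\text{Dom}\,\debar_X$. By Definition~\ref{Dom}, membership in $\text{Dom}\,\debar_X$ requires $\debar(K\varphi\wedge\omega)\in\W_X^{n,*}$ for every $(n-p)$-\emph{structure form} $\omega$, i.e.\ for the full $E$-valued current $i_*\omega=\mathcal{R}=a\cdot i_*\omega_0$ of Proposition~\ref{absolut}, not merely for the $\debar$-closed currents in $\Ba_X^{n-p}$. Knowing $\debar(K\varphi\wedge\mu)\in\W_X^{n,*}$ for all $\mu\in\Ba_X^{n-p}$ gives a well-defined $\debar K\varphi$ in the sense of the $\debar$ on $\V_X^{p,*}$, but it does not control $\debar\bigl(a\cdot(K\varphi\wedge\omega_0)\bigr)$: $a$ is only almost semi-meromorphic, and $\debar$ of such a product can produce residue contributions along the singular support of $a$ that fail the SEP, so one cannot pass from the $\omega_0$-level to the $\omega$-level by Leibniz. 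The paper avoids this by running the whole identity at the level of $\mathcal{R}(z)$, obtaining \eqref{telefontratt}; there all terms except $\nabla_f(\omega\wedge K\varphi)$ are visibly in $\W_X^{n,*}$, whence $\debar(\omega\wedge K\varphi)\in\W_X^{n,*}$ for every structure form — this is exactly $\text{Dom}\,\debar_X$ — and only afterwards contracts with holomorphic $f^*$-closed $\xi$, using the isomorphism \eqref{bibblan} and \eqref{telefjomp}, to deduce \eqref{snabeltand}. Since $\text{Dom}\,\debar_X$-membership is part of the statement and is used later (e.g.\ in Lemma~\ref{ultraortodox} and for $\A_X^{p,*}\subset\text{Dom}\,\debar_X$), you should reorganize your argument accordingly: derive the identity first with $\mathcal{R}(z)$ wedged in, and specialize to $\mu\in\Ba_X^{n-p}$ at the end.
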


\begin{proposition}\label{haddock}
Let $D'\Subset D$ be a relatively compact open subset and set $X'=X\cap D'$.
There are integral operators
\begin{equation*}
\check K\colon \W^{n-p,*+1}(X)\to\W^{n-p,*}(X'), \quad
\check P\colon \W^{n-p,*}(X)\to\W^{n-p,*}(X')
\end{equation*}
such that if $i_*\mu = \sum_\ell \mu_\ell\wedge i_*\omega_\ell$ for some $\mu_\ell\in\E_D^{0,*}$ and $\omega_\ell\in\Ba_X^{n-p}$, then 
\begin{equation}\label{oortodox}
\mu=\debar \check K\mu + \check K\debar\mu + \check P\mu.
\end{equation}

If $\mu$ in addition has compact support in $X$ one can choose $\check K$ and $\check P$ such that $\check K\varphi$ and $\check P\varphi$
have compact support in $X$.
\end{proposition}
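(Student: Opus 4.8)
The plan is to follow the proof of Proposition~\ref{kolja}, interchanging the two factors $D_\zeta$ and $D_z$ so that the integral operators are pushed forward by $\pi^1$ instead of $\pi^2$; the checked operators $\check K$ and $\check P$ use the \emph{same} kernels $\vartheta(B\wedge g\wedge H)\mathcal{R}$ and $\vartheta(g\wedge H)\mathcal{R}$ as $K$ and $P$, so the formula should come from the same underlying kernel identity, merely integrated in the other variable. First I would recall the ambient Koppelman setup in $D\subset\C^N$: the Bochner--Martinelli form $B$ and a weight $g$ give kernels $k=B\wedge g$ and $p=g$ satisfying the fundamental relation $\nabla_{\zeta-z} k = p - [\Delta]$, where $[\Delta]$ is the current of integration along the diagonal and $\nabla_{\zeta-z}$ combines $\debar$ with contraction by $\zeta-z$. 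Wedging with the Hefer morphism $H$ and the residue current $\mathcal{R}$ produces the kernels of $\check K$ and $\check P$ exactly as in Definitions~\ref{Pdefs} and~\ref{Kdefs}.

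The heart of the argument is a fundamental identity for the full kernel $\vartheta(B\wedge g\wedge H)\mathcal{R}$. Since $H$ is a Hefer morphism for the resolution \eqref{karvkplx}, it interchanges $f(\zeta)$ and $f(z)$ modulo $\nabla_{\zeta-z}$-exact terms; combining this with $f\mathcal{R}=\debar\mathcal{R}$ from Lemma~\ref{nissen} and the weight property of $g$, one obtains that $\debar$ of the $\check K$-kernel equals the $\check P$-kernel minus a diagonal contribution. Pushing forward by $\pi^1$ and using the $\hol_X$-linearity recorded in Lemma~\ref{yster}, this is the desired Koppelman formula \eqref{oortodox} at the formal level.

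To make this rigorous I would regularize, replacing $\mathcal{R}$ by $\mathcal{R}^\epsilon=R^\epsilon\otimes d\zeta$ with $R^\epsilon=\debar\chi(|F|^2/\epsilon)\wedge u$, and cutting off near the diagonal with $\chi^\epsilon=\chi(|\zeta-z|^2/\epsilon)$, so that all kernels are smooth and the ambient identity applies literally. Applying the smooth Koppelman formula, pushing forward by $\pi^1$, and letting $\epsilon\to 0$, I would invoke Lemma~\ref{vittling} to pass to the limit in products of the form $\mathcal{R}(z)\wedge\vartheta(B\wedge g\wedge H)\mathcal{R}^\epsilon$, and the SEP together with Lemma~\ref{yster} to discard the spurious diagonal terms supported on the proper subset $\{\zeta=z\}\cap Z\times Z$. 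The reproducing term collapses to $\mu$ precisely because $g$ equals the identity on the diagonal and because $\mu$ has the special form $i_*\mu=\sum_\ell\mu_\ell\wedge i_*\omega_\ell$, which lets one carry out the residue computation fibrewise. That $\check K\mu$ and $\check P\mu$ land in $\W^{n-p,*}(X')$ follows from Lemma~\ref{yster} and the discussion of Section~\ref{intop}; compact support is preserved by choosing $g$ with the support properties of Example~\ref{viktigt}.

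The step I expect to be the main obstacle is the limit $\epsilon\to 0$ in the reproducing term and the verification that it equals $\mu$ and not some regularized proxy. Here the hypothesis on the form of $\mu$ is essential: via the representation \eqref{grymta} it reduces matters to $\mu=\mu_\ell\wedge i_*\omega_\ell$ with $\mu_\ell$ smooth, where one differentiates in the $\tau$-variables, evaluates at $\tau=0$, and applies the estimates of \cite[Proposition~10.5]{AL} exactly as in the proof of Theorem~\ref{knall}(ii). The delicate point is coordinating the two regularizations $\chi^\epsilon$ and $\chi(|F|^2/\epsilon)$ so that they may be sent to $0$ in a compatible order, and this is precisely what Lemma~\ref{vittling} is designed to control.
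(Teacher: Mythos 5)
Your overall strategy is the paper's: regularize with $\mathcal{R}^\epsilon$, use the fundamental kernel identity, pass to the limit via Lemma~\ref{vittling}, and push forward by $\pi^1$. But there is one concrete gap at what you call the heart of the argument. The identity that actually holds is \eqref{makrill},
\begin{equation*}
\nabla_{f(z)}\big(\mathcal{R}(z)\wedge\vartheta(B\wedge g\wedge H)\mathcal{R}\big)=\mathcal{R}\wedge[\Delta]-\mathcal{R}(z)\wedge\vartheta(g\wedge H)\mathcal{R},
\end{equation*}
that is, $\nabla_{f(z)}=f(z)-\debar$ applied to the kernel wedged with the \emph{extra factor} $\mathcal{R}(z)$. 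It is not true that $\debar$ of the $\check K$-kernel equals the $\check P$-kernel minus a diagonal contribution: there are $f$-terms coming from $f\mathcal{R}=\debar\mathcal{R}$ and the Hefer morphism that only cancel against the factor $\mathcal{R}(z)$. This is exactly where the hypothesis $i_*\mu=\sum_\ell\mu_\ell\wedge i_*\omega_\ell$ is consumed: one writes $i_*\omega_\ell=\xi_\ell\cdot\mathcal{R}$ with $\xi_\ell$ holomorphic and $f^*$-closed (possible since \eqref{bibblan} is an isomorphism), multiplies \eqref{basta} by $\sum_\ell\mu_\ell\wedge\xi_\ell$, and uses $f^*\xi_\ell=0$ and $\debar\xi_\ell=0$ to convert $\mu_\ell\wedge\xi_\ell\cdot\nabla_{f(z)}(\cdots)$ into $\debar\big(\mu_\ell\wedge\xi_\ell\cdot(\cdots)\big)+\debar\mu_\ell\wedge\xi_\ell\cdot(\cdots)$; after letting $\epsilon\to 0$ and applying $\pi^1_*$ this is precisely $\debar\check K\mu+\check K\debar\mu=\mu-\check P\mu$.

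You instead locate the use of the hypothesis in the reproducing term and in a fibrewise residue computation via \eqref{grymta} and \cite[Proposition~10.5]{AL}; that machinery belongs to Theorem~\ref{knall}(ii) (smoothness of $\check K\mu$ on $\Xpreg$) and is not needed for the Koppelman formula itself. Likewise, the cutoff $\chi(|\zeta-z|^2/\epsilon)$ near the diagonal is not used here: the only regularization in the paper's proof is replacing $\mathcal{R}$ by $\mathcal{R}^\epsilon$, with Lemma~\ref{vittling} controlling the limit, and the diagonal term is already present in \eqref{basta}. These deviations are harmless, but the missing contraction with $f^*$-closed $\xi_\ell$ is the step your write-up would fail on if executed literally.
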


\begin{proof}[Proof of Propositions~\ref{kolja} and \ref{haddock}]
Let $\mathcal{R}^\epsilon$ be as in Lemma~\ref{vittling}. In the same way as in \cite[Section~5]{SK}, cf.\ also 
\cite[Section~5]{AS} and \cite[Eq.~(9.16)]{AL}, one obtains
\begin{equation}\label{basta}
\nabla_{f(z)} \big(\mathcal{R}(z)\wedge\vartheta(B\wedge g\wedge H)\mathcal{R}^\epsilon\big) =
\mathcal{R}\wedge[\Delta] - \mathcal{R}(z)\wedge\vartheta(g\wedge H)\mathcal{R}^\epsilon,
\end{equation}
where $\nabla_{f(z)}=f(z)-\debar$. Notice that, for $\epsilon>0$, all current products are well-defined as tensor products.
Letting $\epsilon\to 0$ we get by Lemma~\ref{vittling},
\begin{equation}\label{makrill}
\nabla_{f(z)} \big(\mathcal{R}(z)\wedge\vartheta(B\wedge g\wedge H)\mathcal{R}\big) =
\mathcal{R}\wedge[\Delta] - \mathcal{R}(z)\wedge\vartheta(g\wedge H)\mathcal{R}.
\end{equation}

To show the first statement of Proposition~\ref{kolja}, choose $g$ such that $\zeta\mapsto g(\zeta,z)$ has support in a 
fixed compact subset, containing $D'$, of $D$ for all $z\in D'$. Multiplying \eqref{makrill} by a $\tilde\varphi(\zeta)\in\E^{p,*+1}(D)$ 
such that $i^*\tilde\varphi=\varphi$ and 
applying $\pi^2_{*}$ we get
\begin{equation*}
\nabla_f (\mathcal{R}\wedge i_* K\varphi) + \mathcal{R}\wedge i_*K(\debar \varphi) =
\mathcal{R}\wedge \tilde\varphi - \mathcal{R}\wedge i_*P\varphi,
\end{equation*}
i.e.,
\begin{equation}\label{telefontratt}
\nabla_f (\omega\wedge  K\varphi) + \omega\wedge K(\debar \varphi) =
\omega\wedge \varphi - \omega\wedge P\varphi.
\end{equation}
In view of Definitions~\ref{Pdefs} and \ref{Kdefs} all terms except $\nabla_f (\omega\wedge  K\varphi)$
are in $\W_X^{n,*}$ and consequently $\nabla_f (\omega\wedge  K\varphi)$ is too. 
Hence, since $f(\omega\wedge K\varphi)\in \W_X^{n,*}$ also $\debar(\omega\wedge  K\varphi)\in \W_X^{n,*}$,
and so $K\varphi\in \text{Dom}\,\debar_X$. Thus, by \eqref{telefjomp}, we can replace $\nabla_f (\omega\wedge  K\varphi)$
in \eqref{telefontratt} by $-\omega\wedge\debar K\varphi$. Multiplying the resulting equality by holomorphic $E^*$-valued $\xi$
such that $f^*\xi=0$ we get, since the map \eqref{bibblan} is an isomorphism, 
\begin{equation}\label{snabeltand}
\mu\wedge\varphi=\mu\wedge\debar K\varphi + \mu\wedge K\debar\varphi + \mu\wedge P\varphi, \quad \forall\mu\in\Ba_X^{n-p},
\end{equation} 
which is what \eqref{ortodox} means.

If $\varphi$ has compact support we can take a weight $g$ such that $z\mapsto g(\zeta,z)$ has compact support.
The preceding argument goes through unchanged and it is clear that $K\varphi$ and $P\varphi$ have compact support.

\smallskip

To show Proposition~\ref{haddock}, let $\xi_\ell$ be holomorphic $f^*$-closed sections of $E^*$ such that
$i_*\omega_\ell=\xi_\ell\cdot \mathcal{R}$, so that $i_*\mu=\sum_\ell \mu_\ell\wedge\xi_\ell\cdot\mathcal{R}$.
Since $\nabla_f\mathcal{R}=0$ and $\debar\xi_\ell=0$ a simple computations gives
\begin{eqnarray*}
\mu_\ell\wedge\xi_\ell\cdot\nabla_{f(z)} \big(\mathcal{R}(z)\wedge\vartheta(B\wedge g\wedge H)\mathcal{R}^\epsilon\big)
&=&
\debar\big(\mu_\ell\wedge\xi_\ell\cdot \mathcal{R}(z)\wedge\vartheta(B\wedge g\wedge H)\mathcal{R}^\epsilon\big)\\
& & + \debar\mu_\ell\wedge\xi_\ell\cdot \mathcal{R}(z)\wedge\vartheta(B\wedge g\wedge H)\mathcal{R}^\epsilon.
\end{eqnarray*}
Hence, in view of Lemma~\ref{vittling}, multiplying \eqref{basta} by $\sum_\ell \mu_\ell\wedge\xi_\ell$ and letting
$\epsilon\to 0$ we obtain
\begin{equation*}
\debar\big(i_*\mu(z)\wedge\vartheta(B\wedge g\wedge H)\mathcal{R}\big) + 
\debar i_*\mu(z)\wedge\vartheta(B\wedge g\wedge H)\mathcal{R} =
i_*\mu\wedge[\Delta] - i_*\mu\wedge\vartheta(g\wedge H)\mathcal{R}.
\end{equation*}

If $g$ is chosen so that $z\mapsto g(\zeta,z)$ has support in a fixed compact for all $\zeta\in D'$, then
\eqref{oortodox} follows by applying $\pi^1_{*}$. If $\mu$ has compact support we instead choose $g$ 
such that $\zeta\mapsto g(\zeta,z)$ has compact support and apply $\pi^1_{*}$. 
\end{proof}


\begin{definition}\label{Adef}
If $\mathcal{U}\subset X$ is open and $\varphi\in\V^{p,q}(\mathcal{U})$ we say that $\varphi$ is a section of 
$\A_X^{p,q}$ over $\mathcal{U}$, $\varphi\in\A^{p,q}(\mathcal{U})$, if for every $x\in\mathcal{U}$ the germ $\varphi_x$
can be written as a finite sum of terms
\begin{equation}\label{skurk}
\xi_\nu\wedge K_\nu(\cdots \xi_2\wedge K_2 (\xi_1\wedge K_1(\xi_0))\cdots),
\end{equation}
where $\nu\geq 0$, $\xi_0\in\E_X^{p,*}$, $\xi_j\in\E_X^{0,*}$ for $j\geq 1$, $K_j$ are integral operators as defined in Section~\ref{intop},
and $\xi_j$ has compact support in the set where $z\mapsto K_j(\zeta,z)$ is defined.  
\end{definition}

\begin{definition}\label{Bdef}
If $\mathcal{U}\subset X$ is open and $\mu\in\W^{n-p,q}(\mathcal{U})$ we say that $\mu$ is a section of 
$\Bsheaf_X^{n-p,q}$ over $\mathcal{U}$, $\mu\in\Bsheaf^{n-p,q}(\mathcal{U})$, if for every $x\in\mathcal{U}$ the germ $\mu_x$
can be written as a finite sum of terms
\begin{equation}\label{tjyv}
\xi_\nu\wedge \check K_\nu(\cdots \xi_2\wedge \check K_2 (\xi_1\wedge \check K_1(\xi_0\wedge\omega))\cdots),
\end{equation}
where $\nu\geq 0$, $\omega$ is an $(n-p)$-structure form, $\xi_j\in\E_X^{0,*}$, $\check K_j$ are integral operators as defined in
Section~\ref{intop}, $\xi_j$ has compact support in the set where $\zeta\mapsto \check K_j(\zeta,z)$ is defined, and
$\xi_0$ takes values in $E^*$.
\end{definition}

\begin{proposition}\label{hemma}
The sheaf $\A_X^{p,*}$ has the following properties.
\begin{itemize}
\item[(a1)] It is a module over $\E_X^{0,*}$,
\item[(a2)] if $K$ is an integral operator as defined in Section~\ref{intop} then
$K\colon\A_X^{p,*+1}\to\A_X^{p,*}$,
\item[(a3)] $\A_X^{p,*}\subset \text{Dom}\,\debar_X$,
\item[(a4)] $\debar\colon\A_X^{p,*}\to\A_X^{p,*+1}$,
\item[(a5)] $\A_X^{p,*}=\E_X^{p,*}$ on $\Xpreg$,
\item[(a6)] \eqref{ortodox} holds for $\varphi\in\A_X^{p,*}$.
\end{itemize}

\noindent The sheaf $\Bsheaf_X^{n-p,*}$ has the following properties.
\begin{itemize}
\item[(b1)] It is a module over $\E_X^{0,*}$,
\item[(b2)] if $\check K$ is an integral operator as defined in Section~\ref{intop} then
$\check K\colon\Bsheaf_X^{n-p,*+1}\to\Bsheaf_X^{n-p,*}$,
\item[(b3)] $\debar\colon\Bsheaf_X^{n-p,*}\to\Bsheaf_X^{n-p,*+1}$,
\item[(b4)] if $\mu\in\Bsheaf_X^{n-p,*}$ then on $\Xpreg$, $\mu=\sum_\ell\mu_\ell\wedge \omega_\ell$ for some
$\mu_\ell\in\E_X^{0,*}$ and $\omega_\ell\in\Ba_X^{n-p}$,
\item[(b5)] \eqref{oortodox} holds for $\mu\in\Bsheaf_X^{n-p,*}$.
\end{itemize}
\end{proposition}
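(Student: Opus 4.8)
The plan is to derive everything from the defining expressions \eqref{skurk} and \eqref{tjyv}, the Koppelman formulas of Propositions~\ref{kolja} and \ref{haddock}, Theorem~\ref{knall}, and the pseudomeromorphic calculus, organizing the analytic content into two inductions on the number $\nu$ of nested integral operators. Properties (a1), (a2), (b1), and (b2) I would dispose of directly: multiplying a term \eqref{skurk} by $\eta\in\E_X^{0,*}$ only replaces the outermost factor $\xi_\nu$ by $\eta\wedge\xi_\nu\in\E_X^{0,*}$, while applying $K$ (resp.\ $\check K$) prepends one more level to \eqref{skurk} (resp.\ \eqref{tjyv}); since the statements are local I may insert a cutoff equal to $1$ near the point to meet the support condition. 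For (a5) and (b4) I would induct on $\nu$ using Theorem~\ref{knall}: the seed $\xi_0\in\E_X^{p,*}$ of \eqref{skurk} is smooth and each $K$ preserves smoothness on $\Xpreg$ by Theorem~\ref{knall}(i), so every term of \eqref{skurk} lies in $\E_X^{p,*}$ there, and conversely $\E_X^{p,*}=\A_X^{p,*}$ already at $\nu=0$; for (b4), Proposition~\ref{absolut} writes the seed $\xi_0\wedge\omega$ as $\sum_\ell\mu_\ell\wedge\omega_\ell$ with $\mu_\ell\in\E_X^{0,*}$, $\omega_\ell\in\Ba_X^{n-p}$ on $\Xpreg$ (where the matrix $a$ is smooth), a form preserved by each $\check K$ by Theorem~\ref{knall}(ii).

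The core is a Koppelman sub-claim: for $\psi\in\A_X^{p,*}\cap\text{Dom}\,\debar_X$ with $\debar\psi\in\A_X^{p,*}$ and any integral operator $K$, one has $K\psi\in\text{Dom}\,\debar_X$, the formula $\psi=\debar K\psi+K\debar\psi+P\psi$ holds, and $\debar K\psi\in\A_X^{p,*+1}$. I would prove this by rerunning the argument of Proposition~\ref{kolja}, noting that its derivation only uses $\psi\in\V_X^{p,*}$: multiplying the current identity \eqref{makrill} by $\psi(\zeta)$ and applying $\pi^2_{*}$ yields the analogue of \eqref{telefontratt},
\[
\nabla_f(\omega\wedge K\psi)+\omega\wedge K(\debar\psi)=\omega\wedge\psi-\omega\wedge P\psi ,
\]
all products being interpreted through $i_*\omega=a\cdot i_*\omega_0$ (Proposition~\ref{absolut}) and Lemma~\ref{rutat}, the regularization $\mathcal{R}^\epsilon$ of Lemma~\ref{vittling} justifying the passage to the limit, and the term $K(\debar\psi)$ being meaningful exactly because $\debar\psi\in\A_X^{p,*}\subset\V_X^{p,*}$. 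Since $\omega\wedge\psi$, $\omega\wedge P\psi$ (recall $P\psi$ is smooth), and $\omega\wedge K\debar\psi$ all lie in $\W_X^{n,*}$, so does $\nabla_f(\omega\wedge K\psi)$, whence $\debar(\omega\wedge K\psi)\in\W_X^{n,*}$ and $K\psi\in\text{Dom}\,\debar_X$; replacing $\nabla_f(\omega\wedge K\psi)$ by $-\omega\wedge\debar K\psi$ via \eqref{telefjomp} and pairing with $f^*$-closed sections of $E^*$ through the isomorphism \eqref{bibblan} gives the Koppelman formula, and then $\debar K\psi=\psi-K\debar\psi-P\psi\in\A_X^{p,*+1}$ because $P\psi\in\E_X^{p,*}\subset\A_X^{p,*}$ and $K\debar\psi\in\A_X$ by (a2).

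With the sub-claim in hand, (a3), (a4), (a6) follow by induction on $\nu$: the base case $\nu=0$ is \eqref{fredag} with Proposition~\ref{kolja}, and for $\varphi=\xi_\nu\wedge K_\nu\psi$ the sub-claim (applicable by the induction hypothesis on $\psi$) gives $K_\nu\psi\in\text{Dom}\,\debar_X$ and $\debar K_\nu\psi\in\A_X$, so $\varphi\in\text{Dom}\,\debar_X$ (a module over $\E_X^{0,*}$, as one checks from Definition~\ref{Dom}) and $\debar\varphi=\debar\xi_\nu\wedge K_\nu\psi\pm\xi_\nu\wedge\debar K_\nu\psi\in\A_X^{p,*+1}$; finally (a6) is the sub-claim applied to $\varphi$ itself. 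The dual statements (b3) and (b5) I would obtain by the same scheme from Proposition~\ref{haddock}, with one extra step: since that proposition asserts \eqref{oortodox} only when $i_*\mu=\sum_\ell\mu_\ell\wedge i_*\omega_\ell$ with $\mu_\ell$ smooth—true on $\Xpreg$ by (b4) but not globally—I would first establish \eqref{oortodox} on $\Xpreg$ and then globalize by the SEP, using that $\mu$, $\check P\mu$, $\check K\debar\mu$, and $\debar\check K\mu$ are all sections of $\W_X^{n-p,*}$ (the last two because $\debar\mu\in\W_X$, whose base case is $\debar(\xi_0\wedge\omega)=(\debar\xi_0\pm f^*\xi_0)\wedge\omega$ via $\debar\mathcal{R}=f\mathcal{R}$ of Lemma~\ref{nissen}, and because $i_*\check K\mu$ is of the form (smooth)$\cdot\mathcal{R}$ so $\debar$ stays controlled), together with the fact that two sections of $\W_X$ agreeing on the dense set $\Xpreg$ agree everywhere; then $\debar\check K\mu=\mu-\check K\debar\mu-\check P\mu\in\Bsheaf_X^{n-p,*+1}$ drives the induction for (b3) and $\check P\mu$ is again a seed \eqref{tjyv} with $\nu=0$.

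I expect the main obstacle to be the inductive step for (a3) and (a6): legitimizing the multiplication of the current identity \eqref{makrill} by a non-smooth $\psi\in\V_X^{p,*}$ and its pushforward, so that every product and, crucially, the $\debar$ producing the $K\debar\psi$ term make sense through the almost semi-meromorphic calculus (Lemmas~\ref{rutat} and \ref{vittling}) and the SEP. Carrying $\debar\psi\in\A_X\subset\V_X$ through the induction is precisely what renders that term meaningful, and on the $\Bsheaf$-side the gap between the global and the $\Xpreg$-local validity of Proposition~\ref{haddock}, bridged by the SEP, is the analogous delicate point.
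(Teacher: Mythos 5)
Your treatment of (a1), (a2), (a5), (b1), (b2), (b4) and your closing induction for (a4)/(b3) coincide with the paper's proof. The genuine gap is in your ``Koppelman sub-claim''. You propose to prove it by ``rerunning the argument of Proposition~\ref{kolja}, noting that its derivation only uses $\psi\in\V_X^{p,*}$'', but that is not so: the derivation of \eqref{telefontratt} multiplies the current identity \eqref{makrill} by a \emph{smooth} form $\tilde\varphi(\zeta)$, which is harmless for pseudomeromorphic currents. When $\psi\in\A_X^{p,*}$ is merely in $\V_X^{p,*}$, the product $\mathcal{R}(z)\wedge\vartheta(B\wedge g\wedge H)\mathcal{R}\wedge\psi(\zeta)$ is only defined through almost semi-meromorphic extensions, and applying $\nabla_{f(z)}$ to it can a priori produce extra residual terms supported over $X_{p\text{-sing}}$ in the $\zeta$-variable; your identity $\nabla_f(\omega\wedge K\psi)+\omega\wedge K(\debar\psi)=\omega\wedge\psi-\omega\wedge P\psi$ is therefore not yet justified, and since you then read off $K\psi\in\text{Dom}\,\debar_X$ from that very identity, the argument is circular. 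Lemma~\ref{vittling}, which you invoke at this point, only controls the regularization $\mathcal{R}^\epsilon$ of the kernel in $\zeta$ and says nothing about the singularities of $\psi$.

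The missing ingredient is Lemma~\ref{ASmain} (the crucial Lemma~6.2 of \cite{AS}): for the iterated kernel current $T$ of \eqref{matdax} one has $\lim_{\epsilon\to 0}\debar\chi(|h(z^j)|^2/\epsilon)\wedge T=0$. The paper uses it precisely where your sub-claim has to do all the work: first to prove (a3) \emph{directly} and non-inductively, by writing $i_*(\omega\wedge\varphi)=\pi^\nu_{*}(T\wedge\xi)$ for the full iterated expression \eqref{skurk} and deducing via the computation \eqref{smuggla} that $\debar i_*(\omega\wedge\varphi)$ has the SEP; and then, with (a2), (a3), (a5) in hand, to feed Lemma~\ref{ultraortodox}, whose proof carries out exactly the regularization $\chi(|h|^2/\epsilon)$ across $X_{p\text{-sing}}$ that you would need and which crucially uses the \emph{already established} memberships $\varphi,K\varphi\in\text{Dom}\,\debar_X$ to pass from $\Xpreg$ to all of $X$ by the SEP. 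You correctly flag this as ``the main obstacle'', but you do not supply the argument; without Lemma~\ref{ASmain} or an equivalent, the sub-claim, and with it (a3), (a6) and (b5), remains unproven.
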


To prove this proposition we need the following two lemmas.
The first one extends Propositions~\ref{kolja} and \ref{haddock}.

\begin{lemma}\label{ultraortodox}
Let $\varphi\in\V^{p,*}(X)$. Assume that $\varphi, K\varphi\in\text{Dom}\,\debar_X$ and that $\varphi\in\E_X^{p,*}$ on $\Xpreg$. 
Then \eqref{ortodox} holds on $X'$.
If in addition $\varphi$ has compact support, then $K$ and $P$ can be chosen such that $K\varphi$ and $P\varphi$ have compact support.

Let $\mu\in\W^{n-p,*}(X)$. Assume that $\debar\mu, \debar\check K \mu\in\W_X^{n-p,*}$ and
that $i_*\mu=\sum_\ell \mu_\ell\wedge\omega_\ell$ on $\Xpreg$ for some $\mu_\ell\in\E_D^{0,*}$ and $\omega_\ell\in\Ba_X^{n-p,*}$.
Then \eqref{oortodox} holds on $X'$. If $\mu$ in addition has compact support, then $\check K$ and $\check P$ can be chosen such that
$\check K\mu$ and $\check P\mu$ have compact support.
\end{lemma}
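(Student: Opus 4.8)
The plan is to repeat the proofs of Propositions~\ref{kolja} and \ref{haddock} essentially verbatim, the point being that the three hypotheses have been tailored so that each step survives when the smooth form $\varphi$ (resp.\ the smooth-type current $\mu$) is replaced by a section of $\V_X^{p,*}$ (resp.\ $\W_X^{n-p,*}$). Throughout, every product against $\mathcal{R}=i_*\omega$ is interpreted via Proposition~\ref{absolut} and Lemma~\ref{rutat}: since $\mathcal{R}=a\cdot i_*\omega_0$ with $a$ almost semi-meromorphic and generically smooth on $Z$, the currents $\omega\wedge\varphi$, $\omega\wedge K\varphi$, $\omega\wedge P\varphi$ and $\omega\wedge K(\debar\varphi)$ are all well-defined in $\W_X^{n,*}$. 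Note that $\debar\varphi\in\V_X^{p,*}$ is available exactly because $\varphi\in\text{Dom}\,\debar_X$, so that $K(\debar\varphi)$ makes sense in the first place.

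For the first statement I would start from the regularized identity \eqref{basta}, multiply by $\varphi(\zeta)$, apply $\pi^2_*$, and let $\epsilon\to 0$ by Lemma~\ref{vittling}, exactly as in Proposition~\ref{kolja}. The only place where smoothness of $\varphi$ was used there is in the Leibniz computation and the regularization of Theorem~\ref{knall}; since $\varphi\in\E_X^{p,*}$ on $\Xpreg$, these are valid on $\Xpreg$ (using the localization in the proof of Theorem~\ref{knall}), and they produce the analogue of \eqref{telefontratt} on $\Xpreg$. As all four terms of that identity have the SEP with respect to $Z$, it then holds on all of $X'$. Now the hypothesis $K\varphi\in\text{Dom}\,\debar_X$ is precisely what is needed to apply \eqref{telefjomp} and replace $\nabla_f(\omega\wedge K\varphi)$ by $-\omega\wedge\debar K\varphi$; multiplying the result by an $f^*$-closed holomorphic section $\xi$ of $E^*$ and using that the map \eqref{bibblan} is an isomorphism yields \eqref{snabeltand} for every $\mu\in\Ba_X^{n-p}$, which is the meaning of \eqref{ortodox}. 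If $\varphi$ has compact support one chooses, as in Proposition~\ref{kolja}, a weight $g$ with $z\mapsto g(\zeta,z)$ compactly supported, and then $K\varphi$ and $P\varphi$ inherit compact support.

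The second statement is handled the same way, now starting from \eqref{basta} multiplied by $\sum_\ell\mu_\ell\wedge\xi_\ell$ (with $i_*\omega_\ell=\xi_\ell\cdot\mathcal{R}$ and $f^*\xi_\ell=0$) and pushed forward by $\pi^1_*$, as in Proposition~\ref{haddock}. Here the hypotheses $\debar\mu,\debar\check K\mu\in\W_X^{n-p,*}$ play the role of the $\text{Dom}\,\debar_X$ conditions, ensuring that the $\debar$'s appearing in \eqref{oortodox} are genuine $\W$-currents, while the generic smoothness $i_*\mu=\sum_\ell\mu_\ell\wedge\omega_\ell$ on $\Xpreg$ lets the Leibniz and limit arguments run there; the SEP then extends \eqref{oortodox} to all of $X'$. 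Again, compact support of $\mu$ is propagated by choosing $g$ with $\zeta\mapsto g(\zeta,z)$ compactly supported.

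I expect the main obstacle to be the justification of the limit $\epsilon\to 0$ together with the Leibniz rule when $\varphi$ (resp.\ $\mu$) is singular, since a priori one cannot differentiate past a non-smooth factor nor pass to a smooth ambient representative. The resolution is the division of labour among the three hypotheses: smoothness on $\Xpreg$ makes every manipulation legitimate generically, the $\text{Dom}\,\debar_X$-type conditions turn the formal $\nabla_f$- and $\debar$-terms into bona fide currents via \eqref{telefjomp}, and the common SEP with respect to $Z$ upgrades the generically valid Koppelman identity to an identity on all of $X'$.
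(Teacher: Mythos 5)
There is a genuine gap. Your plan is to rerun the proof of Proposition~\ref{kolja} verbatim, multiplying \eqref{basta} by $\varphi(\zeta)$ and letting $\epsilon\to 0$ via Lemma~\ref{vittling}. But the $\epsilon$ in \eqref{basta}/Lemma~\ref{vittling} regularizes the residue factor $\mathcal{R}^\epsilon(\zeta)$, not $\varphi$; the step that actually breaks down is the Leibniz manipulation needed to pull $\nabla_{f(z)}$ (equivalently $\debar$) past the factor $\varphi(\zeta)$, and in Proposition~\ref{kolja} this is done by choosing a smooth ambient representative $\tilde\varphi\in\E^{p,*}(D)$ — which does not exist for a general $\varphi\in\V^{p,*}(X)$. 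Your proposed fix, ``the computation is valid generically because $\varphi\in\E_X^{p,*}$ on $\Xpreg$, then extend by the SEP,'' confuses the two variables: the identity \eqref{telefontratt} is an identity of currents in $z$, but the push-forward $\pi^2_*$ integrates over $\zeta$, and even for $z\in\Xpreg$ the integration runs over $\zeta\in X_{p\text{-sing}}$ where $\varphi$ is singular. So smoothness of $\varphi$ on $\Xpreg$ does not make the $\zeta$-side manipulations legitimate, and the SEP in $z$ cannot repair a failure that occurs in the $\zeta$-integral.

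The paper's proof resolves this with a second, different regularization: take $h$ cutting out $X_{p\text{-sing}}$, set $\chi^\epsilon=\chi(|h|^2/\epsilon)$, and apply Proposition~\ref{kolja} to the genuinely smooth form $\chi^\epsilon\varphi$. This produces the extra commutator term $K(\debar\chi^\epsilon\wedge\varphi)$, and the entire content of the lemma is the proof that this term tends to $0$ on $\Xpreg$: for $z$ in a compact of $\Xpreg$ and $\epsilon$ small, $B\wedge\debar\chi^\epsilon(\zeta)$ is smooth (the diagonal is avoided), so the question reduces to $\omega\wedge\debar\chi^\epsilon\wedge\varphi\to 0$, which follows from
\begin{equation*}
\debar\chi^{\epsilon}\wedge\varphi\wedge\omega
=-\nabla_f(\chi^\epsilon\varphi\wedge\omega)+\chi^\epsilon\nabla_f(\varphi\wedge\omega)
\to -\nabla_f(\varphi\wedge\omega)+\nabla_f(\varphi\wedge\omega)=0,
\end{equation*}
and this is precisely where the hypothesis $\varphi\in\text{Dom}\,\debar_X$ enters (via \eqref{telefjomp}); the hypothesis $K\varphi\in\text{Dom}\,\debar_X$ is then used only to upgrade the resulting identity from $\Xpreg$ to $X'$ by the SEP. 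You correctly identify the three hypotheses as the relevant ingredients, but you never produce the cutoff near $X_{p\text{-sing}}$ nor the vanishing argument for the commutator term, which is the actual mathematical content; the same omission occurs in your treatment of the second statement, where the analogous term $\check K(\debar\chi^\epsilon\wedge\mu)$ must be shown to vanish using $\debar\mu\in\W_X^{n-p,*}$.
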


\begin{proof}
Let $h$ be a holomorphic tuple vanishing precisely on $X_{p\text{-sing}}$ and set $\chi^\epsilon=\chi(|h|^2/\epsilon)$. 
Then Proposition~\ref{kolja} applies to $\chi^\epsilon\varphi$ and hence
\begin{equation*}
\chi^\epsilon\varphi = \debar K(\chi^\epsilon\varphi) + K(\chi^\epsilon\debar\varphi) + K(\debar\chi^\epsilon\wedge\varphi)
+ P(\chi^\epsilon\varphi);
\end{equation*}
recall that this means that \eqref{snabeltand}, with $\varphi$ replaced by $\chi^\epsilon\varphi$, holds.
Since $\varphi\in\V_X^{p,*}$ it follows that $\chi^\epsilon\varphi\to\varphi$, i.e., $\chi^\epsilon\varphi\wedge\mu\to\varphi\wedge\mu$
for all $\mu\in\Ba_X^{n-p}$. By Lemma~\ref{yster} the current \eqref{Kpreldef} is in 
has the SEP with respect to $Z\times Z$ and therefore $K(\chi^\epsilon\varphi)\to K\varphi$. Similarly,
$P(\chi^\epsilon\varphi)\to P\varphi$. Moreover, $\debar\varphi\in\V_X^{p,*}$ since $\varphi\in\text{Dom}\,\debar_X$ and so
$K(\chi^\epsilon\debar\varphi)\to K(\debar\varphi)$. We claim that $\lim_{\epsilon\to 0} K(\debar\chi^\epsilon\wedge\varphi)=0$
on $\Xpreg$. Given the claim it follows that \eqref{ortodox} holds on $\Xpreg$. Since $K\varphi\in\text{Dom}\,\debar_X$ by assumption
it follows by the SEP that \eqref{ortodox} holds. 

To show that claim we may assume that $z$ is in a fixed compact subset of $\Xpreg$.
Then $B\wedge\debar\chi^\epsilon(\zeta)$ is smooth if $\epsilon$ is small enough. 
It thus suffices to show that 
\begin{equation*}
\mathcal{R}(\zeta)\wedge\debar\chi^{\epsilon}(\zeta)\wedge\varphi(\zeta)\wedge i_*\mu(z)\to 0,\quad \forall \mu\in\Ba_X^{n-p}.
\end{equation*}
Since this is a tensor product it suffices to see that $\omega\wedge\debar\chi^{\epsilon}\wedge\varphi\to 0$. 
However, $\varphi\wedge\omega\in\W_X^{n,*}$ and, since $\varphi\in\text{Dom}\,\debar_X$, $\nabla_f(\varphi\wedge\omega)\in\W_X^{n,*}$.
In view of \eqref{telefjomp} thus
\begin{eqnarray*}
\debar\chi^{\epsilon}\wedge\varphi\wedge\omega &=&
\debar(\chi^\epsilon\varphi)\wedge\omega - \chi^\epsilon\debar\varphi\wedge\omega =
-\nabla_f(\chi^\epsilon\varphi\wedge\omega) + \chi^\epsilon\nabla_f(\varphi\wedge\omega) \\
& \to &
-\nabla_f(\varphi\wedge\omega) +\nabla_f(\varphi\wedge\omega) =0.
\end{eqnarray*}

The proof of the second part of the lemma is similar: By Proposition~\ref{haddock},
\begin{equation*}
\chi^\epsilon \mu= \debar \check K(\chi^\epsilon\mu) + \check K (\chi^\epsilon\debar\mu) + \check K(\debar\chi^\epsilon\wedge\mu)
+\check P(\chi^\epsilon\mu).
\end{equation*}
Since $\mu\in\W_X^{n-p,*}$ we have $\chi^\epsilon \mu\to\mu$. By Lemma~\ref{yster} the current \eqref{Kpreldef} has the SEP 
with respect to $Z\times Z$ and therefore $\check K(\chi^\epsilon\mu)\to \check K\mu$. Similarly,
$\check P(\chi^\epsilon\mu)\to\check P\mu$ and, since $\debar\mu\in\W_X^{n-p,*}$, $\check K (\chi^\epsilon\debar\mu)\to\check K\debar\mu$.
Hence, \eqref{oortodox} holds modulo $\tau:=\lim_{\epsilon\to 0}\check K(\debar\chi^\epsilon\wedge\mu)$.
Since $\debar\check K\mu\in\W_X^{p,*}$ by assumption, all terms in \eqref{oortodox} are in $\W_X^{n-p,*}$ and so
\eqref{oortodox} follows by the SEP if $\tau=0$ on $\Xpreg$. For $\zeta$ in a fixed compact subset of $\Xpreg$,
$B\wedge\debar\chi^\epsilon(z)$ is smooth if $\epsilon$ is small enough. Thus, as above, to see that
$\tau=0$ on $\Xpreg$ it suffices to see that
$\mathcal{R}(\zeta)\debar\chi^\epsilon(z)\wedge\mu(z)\to 0$, which follows if $\debar\chi^\epsilon\wedge\mu\to 0$.
However, since $\debar\mu\in\W_X^{n-p,*}$ by assumption, we have
\begin{equation*}
\debar\chi^\epsilon\wedge\mu = \debar(\chi^\epsilon\mu) - \chi^{\epsilon}\debar\mu \to \debar\mu-\debar\mu=0.
\end{equation*}
\end{proof}

The second lemma we need is (a version of) the crucial Lemma~6.2 in \cite{AS}. 
The proof of that lemma goes through in our case; cf.\ also the proof of \cite[Lemma~5.3]{SK}.
We remark that in these cited lemmas the statements and proofs are intrinsic on (Cartesian products of) $X$ whereas we here formulate 
our version in (Cartesian products of) $D$.  
Let 
\begin{equation*}
k(\zeta,z)=\vartheta\big(B(\zeta,z)\wedge g(\zeta,z)\wedge H(\zeta,z)\big)\mathcal{R}(\zeta).
\end{equation*}
Let $z^j$ be coordinates on the $j$th factor of $D$ in $D\times\cdots\times D$. The current 
\begin{equation}\label{matdax}
T:=\mathcal{R}(z^\nu)\wedge k(z^{\nu-1},z^\nu)\wedge\cdots\wedge k(z^1,z^2)
\end{equation}
is well-defined in $\PM_{D\times\cdots\times D}$, has support in $Z\times\cdots\times Z$ and the SEP
with respect to $Z\times\cdots\times Z$ since it is the product of an almost semi-meromorphic current by 
the tensor product of the $\mathcal{R}$-factors, cf.\ Lemma~\ref{yster}. The different $k$-factors may correspond to different
choices of $B$, $g$, $H$, and $\mathcal{R}$.

\begin{lemma}\label{ASmain}
Let $h$ be a holomorphic tuple which is generically non-vanishing on $Z$. Then 
\begin{equation*}
\lim_{\epsilon\to 0} \debar\chi(|h(z^j)|^2/\epsilon)\wedge T=0.
\end{equation*}
\end{lemma}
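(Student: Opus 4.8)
The plan is to reduce the statement to the behaviour of $\mathbf{1}_V$ on pseudomeromorphic currents that enjoy the standard extension property, exactly as in the cited lemmas \cite[Lemma~6.2]{AS} and \cite[Lemma~5.3]{SK}. First I would record what kind of object $T$ is: by the discussion just before the lemma, $k(\zeta,z)=\vartheta(B\wedge g\wedge H)\mathcal{R}(\zeta)$ is the product of the almost semi-meromorphic current $\vartheta(B\wedge g\wedge H)$ by the tensor product of the $\mathcal{R}$-factors, so by Lemma~\ref{yster} (applied factor-by-factor, exactly as in \eqref{matdax}) the current $T$ lies in $\PM_{D\times\cdots\times D}$, has support in $Z\times\cdots\times Z$, and has the SEP with respect to $Z\times\cdots\times Z$. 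This is the only structural input needed.

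Next I would rewrite the limit in terms of the operator $\mathbf{1}_V$. Let $V=\{h(z^j)=0\}$, viewed as a subvariety of $D\times\cdots\times D$ via the $j$th projection. By the definition \eqref{tabasco} of $\mathbf{1}_{D\setminus V}$ and the definition of $\mathbf{1}_V$ immediately following it, we have
\begin{equation*}
\lim_{\epsilon\to 0} \debar\chi(|h(z^j)|^2/\epsilon)\wedge T = \mathbf{1}_V\, T,
\end{equation*}
so the lemma is equivalent to the assertion $\mathbf{1}_V T=0$. The hypothesis that $h$ is generically non-vanishing on $Z$ means precisely that $V$ intersects $Z$ properly in the $j$th factor; hence, as a subvariety of $Z\times\cdots\times Z$, the set $V\cap(Z\times\cdots\times Z)$ has positive codimension, i.e.\ it is a proper analytic subset intersecting $Z\times\cdots\times Z$ properly.

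Now the conclusion follows directly from the SEP of $T$. By definition, a pseudomeromorphic current with support in $Z\times\cdots\times Z$ and the SEP with respect to $Z\times\cdots\times Z$ satisfies $\mathbf{1}_W\tau=0$ for every germ of an analytic set $W$ intersecting $Z\times\cdots\times Z$ properly. Applying this with $W=V$, which intersects $Z\times\cdots\times Z$ properly by the previous paragraph, gives $\mathbf{1}_V T=0$, which is the desired vanishing. The main obstacle, such as it is, is purely bookkeeping: one must make sure that $V$ really does meet $Z\times\cdots\times Z$ in a proper subset, which is where the genericity hypothesis on $h$ enters, and one must invoke Lemma~\ref{yster} for each $k$-factor together with the stability of the SEP under tensor products and multiplication by almost semi-meromorphic currents (cf.\ the paragraph after \eqref{ASMprod}) to guarantee that $T$ indeed has the SEP. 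Beyond that the argument is the same SEP-cancellation already used in the proof of Theorem~\ref{knall}.
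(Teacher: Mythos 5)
There is a genuine gap, and it sits exactly at the step you describe as ``purely bookkeeping.'' You assert that
\begin{equation*}
\lim_{\epsilon\to 0}\debar\chi(|h(z^j)|^2/\epsilon)\wedge T=\mathbf{1}_V T,
\end{equation*}
but this identification is false: the definition \eqref{tabasco} concerns $\lim_\epsilon\chi^\epsilon\mu$, not $\lim_\epsilon\debar\chi^\epsilon\wedge\mu$, and these are very different objects --- limits of the latter kind are precisely how residues are produced. The correct computation, using that $T$ has the SEP (so $\chi^\epsilon T\to T$) and that $\PM$ is closed under $\debar$, is
\begin{equation*}
\debar\chi^\epsilon\wedge T=\debar(\chi^\epsilon T)-\chi^\epsilon\debar T\;\longrightarrow\;\debar T-\mathbf{1}_{D\times\cdots\times D\setminus V}\debar T=\mathbf{1}_V\debar T .
\end{equation*}
So the lemma is equivalent to the statement that $\debar T$ carries no mass on $V$, i.e.\ that $\debar T$ (not $T$) has the SEP across $\{h(z^j)=0\}$. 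This does not follow from the SEP of $T$: already for $T=1/z$ on $\C$, which is pseudomeromorphic with the SEP, one has $\lim_\epsilon\debar\chi(|z|^2/\epsilon)\wedge(1/z)=\debar(1/z)\neq 0$.

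Establishing $\mathbf{1}_V\debar T=0$ is the actual content of the lemma and is why the paper calls \cite[Lemma~6.2]{AS} ``crucial'': its proof uses the specific structure of $T$ --- the almost semi-meromorphic factors $\vartheta(B\wedge g\wedge H)$, the factorization of $\mathcal{R}$ through structure forms, and resolutions of singularities to control the Bochner--Martinelli kernels --- not merely the abstract facts that $T\in\PM$ with support in $Z\times\cdots\times Z$ and the SEP. Your structural observations about $T$ (Lemma~\ref{yster} applied factor by factor, stability of the SEP under multiplication by almost semi-meromorphic currents) are correct and are indeed part of the setup, but they only deliver $\mathbf{1}_V T=0$, which is not what the lemma asks for.
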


\begin{proof}[Proof of Proposition~\ref{hemma}]
It is clear from the definition that $\A_X^{p,*}$ and $\Bsheaf_X^{n-p,*}$ are modules over $\E_X^{0,*}$ and that 
$\A_X^{p,*}$ and $\Bsheaf_X^{n-p,*}$ are closed under $K$-operators and $\check K$-operators, respectively.
By Theorem~\ref{knall} it follows that $\A_X^{p,*}=\E_X^{p,*}$ on $\Xpreg$ and that sections of $\Bsheaf_X^{n-p,*}$ are of the claimed form on $\Xpreg$.

To show that $\A_X^{p,*}\subset\text{Dom}\,\debar_X$ assume that $\varphi$ is given by \eqref{skurk}, where the $\xi_j$ are smooth,
and let $\omega$ be a structure form. Then $i_*\omega=\mathcal{R}$ for some $\mathcal{R}$ and 
$i_*(\omega\wedge\varphi)=\pi^\nu_{*} (T\wedge\xi)$, where $T$ is given by \eqref{matdax}, $\xi$ is some smooth form in $D\times\cdots\times D$,
and $\pi^\nu\colon D\times\cdots\times D\to D$ is the natural projection on the factor with coordinates $z^\nu$. Let $h$ be as in Lemma~\ref{ASmain}
and set $\chi^\epsilon=\chi(|h|^2/\epsilon)$.
By Lemma~\ref{ASmain} we have $\lim_{\epsilon\to 0} \debar\chi^\epsilon\wedge \pi^\nu_{*} T\wedge\xi=0$ and so,
since $i_*(\omega\wedge\varphi)$ has the SEP with respect to $Z$, we get
\begin{eqnarray}\label{smuggla}
\debar i_*(\omega\wedge\varphi) &=& \lim_{\epsilon\to 0}\debar (\chi^\epsilon i_*(\omega\wedge\varphi))
=\lim_{\epsilon\to 0} \debar\chi^\epsilon\wedge \pi^\nu_{*} T\wedge\xi + 
\lim_{\epsilon\to 0} \chi^\epsilon \debar i_*(\omega\wedge\varphi) \nonumber \\
&=&
\lim_{\epsilon\to 0} \chi^\epsilon \debar i_*(\omega\wedge\varphi). 
\end{eqnarray}
Hence, $\debar i_*(\omega\wedge\varphi)$ has the SEP with respect to $Z$ and it follows that $\debar (\omega\wedge\varphi)\in\W_X^{n,*}$.

In a similar way we show that if $\mu\in\Bsheaf_X^{n-p,*}$, then $\debar\mu\in\W_X^{n-p,*}$. Assume that $\mu$ is given by \eqref{tjyv}.
Then $i_*\mu=\pi^\nu_{*}T\wedge\xi$ for some smooth $\xi$. Replacing $\omega\wedge\varphi$ by $\mu$ in \eqref{smuggla} it follows that 
$\debar\mu\in\W_X^{n-p,*}$.

Since $\A_X^{p,*}$ is closed under $K$-operators, $\A_X^{p,*}\subset\text{Dom}\,\debar_X$, and $\A_X^{p,*}=\E_X^{p,*}$ on $\Xpreg$
the Koppelman formula \eqref{ortodox} follows for sections of $\A_X^{p,*}$ by Lemma~\ref{ultraortodox}.
Similarly, since $\Bsheaf_X^{n-p,*}$ is closed under $\check K$-operators, $\debar \Bsheaf_X^{n-p,*}\subset \W_X^{n-p,*}$, and
$\mu=\sum_\ell\mu_\ell\wedge\omega_\ell$ on $\Xpreg$ for any $\mu\in\Bsheaf_X^{n-p,*}$ it follows from 
Lemma~\ref{ultraortodox} that the Koppelman formula \eqref{oortodox} holds for sections of $\Bsheaf_X^{n-p,*}$.

It remains to see that $\A_X^{p,*}$ and $\Bsheaf_X^{n-p,*}$ are closed under $\debar$. 
Let $\varphi\in\A_X^{p,*}$ and assume that $\varphi$ is given by \eqref{skurk}. We show by induction over $\nu$
that $\debar\varphi\in\A_X^{p,*}$. If $\nu=0$, then $\varphi=\xi_0\in\E_X^{p,*}$ and so $\debar\varphi\in\E_X^{p,*}\subset\A_X^{p,*}$.
If $\nu\geq 1$ we write $\varphi=\xi_\nu\wedge K\phi$, where $\phi$ is given by \eqref{skurk} with $\nu$ replaced by $\nu-1$. 
By the induction hypothesis, $\debar\phi\in\A_X^{p,*}$. Hence, $K\debar\phi\in\A_X^{p,*}$. Since the Koppelman formula \eqref{ortodox}, with
$\varphi$ replaced by $\phi$, holds and since $P\phi\in\E_X^{p,*}$ it follows that $\debar K\phi\in\A_X^{p,*}$.
Hence, $\debar\varphi=\debar\xi_\nu\wedge K\phi + \xi_\nu\wedge\debar K\phi\in\A_X^{p,*}$.

If $\mu\in\Bsheaf_X^{n-p,*}$ is given by \eqref{tjyv} we proceed in a similar way by induction over $\nu$. 
If $\nu=0$ then $\mu=\xi_0\wedge\omega$. Then, by the computation showing \eqref{fredag}, it follows that 
$\debar\mu$ has the same form. If $\nu\geq 1$ we write $\mu=\xi_\nu\wedge\check K\mu'$ and the induction hypothesis gives
$\debar\mu'\in\Bsheaf_X^{n-p,*}$. As before, since $\check P\mu'=\xi\wedge\omega$ for some smooth $\xi$, cf.\ Section~\ref{intop}, it follows 
from \eqref{oortodox}, with $\mu$ replaced by $\mu'$,  that $\debar\mu\in\Bsheaf_X^{n-p,*}$.
\end{proof}

\begin{proof}[Proof of Theorem A]
In view of Proposition~\ref{hemma} it only remains to show that \eqref{sol5} is a resolution of $\Om_X^p$.
This is a local statement so we may assume that $X$ is an analytic subspace of a pseudoconvex domain $D\subset\C^N$
and that the point in which we want to show that \eqref{sol5} is exact is $0$.
Let $\varphi\in\A^{p,q}(\mathcal{U}\cap X)$ be $\debar$-closed, where $\mathcal{U}$ is a neighborhood of $0$. 
Choose operators $K$ and $P$ corresponding to a choice of weight $g$ such that $z\mapsto g(\zeta,z)$ is holomorphic in some neighborhood
of $0$ and $\zeta\mapsto g(\zeta,z)$ has support in a fixed compact subset of $\mathcal{U}$.
Then $\vartheta(g\wedge H)\mathcal{R}(\zeta)$ has degree $0$ in $d\bar{z}$. Since it has total bidegree $(N,N)$ it must have full degree
in $d\bar\zeta$. Hence, $P\varphi=0$ if $q\geq 1$. Since, by Proposition~\ref{hemma}, the Koppelman formula \eqref{ortodox} 
holds it follows that $\varphi=\debar K\varphi$
if $q\geq 1$ and $\varphi=P\varphi$ if $q=0$. In the latter case, since $\vartheta(g\wedge H)$ is holomorphic in $z$, we get 
$\varphi\in\Om_X^{p}$.
\end{proof}

\begin{theorem}\label{Bupplosn}
The sheaf complex
\begin{equation}\label{knappar}
0\to \Ba_X^{n-p} \to \Bsheaf_X^{n-p,0} \stackrel{\debar}{\longrightarrow}\cdots\stackrel{\debar}{\longrightarrow} \Bsheaf_X^{n-p,n}\to 0
\end{equation}
is exact if and only if $\Om_X^p$ is Cohen--Macaulay. 
In general, 
\begin{equation*}
\HH^q(\Bsheaf_X^{n-p,\bullet},\debar) \simeq \Ext_{\hol_D}^{\kappa+q}(\Om_X^p,\hol_D).
\end{equation*}
\end{theorem}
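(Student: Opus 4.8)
The plan is to exhibit, stalkwise, an explicit isomorphism between $\HH^q(\Bsheaf_X^{n-p,\bullet},\debar)$ and the cohomology of the dual of the resolution \eqref{karvkplx}, which computes $\Ext_{\hol_D}^{\kappa+q}(\Om_X^p,\hol_D)=\HH^{\kappa+q}(\hol(E^*_\bullet),f^*_\bullet)$. First I would define a map
\begin{equation*}
\Phi\colon \Ext_{\hol_D}^{\kappa+q}(\Om_X^p,\hol_D)\to \HH^q(\Bsheaf_X^{n-p,\bullet},\debar),\qquad [\xi]\mapsto[\xi\cdot\mathcal{R}_{\kappa+q}],
\end{equation*}
for $\xi\in\hol(E^*_{\kappa+q})$ with $f^*_{\kappa+q+1}\xi=0$. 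That $\Phi$ is well-defined is a direct consequence of $f\mathcal{R}=\debar\mathcal{R}$ (Lemma~\ref{nissen}): one computes $\debar(\xi\cdot\mathcal{R}_{\kappa+q})=f^*_{\kappa+q+1}\xi\cdot\mathcal{R}_{\kappa+q+1}=0$, so the image is $\debar$-closed, while $f^*_{\kappa+q}\eta\cdot\mathcal{R}_{\kappa+q}=\debar(\eta\cdot\mathcal{R}_{\kappa+q-1})$, so coboundaries map to coboundaries. Moreover $\xi\cdot\mathcal{R}_{\kappa+q}=i_*(\xi\cdot\omega)$ is a section of $\Bsheaf_X^{n-p,q}$, being the $\nu=0$ case of \eqref{tjyv} in Definition~\ref{Bdef}.

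Next I would prove surjectivity of $\Phi$ using the Koppelman formula. For a $\debar$-closed $\mu\in\Bsheaf_X^{n-p,q}$, Proposition~\ref{hemma}(b5) gives \eqref{oortodox}, so $\mu=\debar\check K\mu+\check P\mu$ and hence $[\mu]=[\check P\mu]$ in $\Bsheaf$-cohomology. Choosing the weight $g$ holomorphic in $\zeta$ and free of $d\bar\zeta_j$ (the analogue of the weight in Example~\ref{viktigt} with the roles of $\zeta$ and $z$ interchanged), the discussion after Definition~\ref{Pdefs} shows that $i_*\check P\mu=\xi\cdot\mathcal{R}$ for a holomorphic $E^*$-valued $\xi$; comparing bidegrees forces $\xi\in\hol(E^*_{\kappa+q})$ and $i_*\check P\mu=\xi\cdot\mathcal{R}_{\kappa+q}$. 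Since $\check P\mu$ is $\debar$-closed, $f^*_{\kappa+q+1}\xi\cdot\mathcal{R}_{\kappa+q+1}=0$, and it remains to deduce $f^*_{\kappa+q+1}\xi=0$, so that $[\xi]\in\Ext^{\kappa+q}$ and $\Phi([\xi])=[\check P\mu]=[\mu]$.

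The remaining point — that $\xi\mapsto\xi\cdot\mathcal{R}_{\kappa+q}$ faithfully represents the cohomology of the dual complex, i.e.\ that $\Phi$ is injective and that the vanishing $f^*_{\kappa+q+1}\xi\cdot\mathcal{R}_{\kappa+q+1}=0$ really forces $f^*_{\kappa+q+1}\xi=0$ — is the heart of the matter and the main obstacle. This is a purely residue-theoretic statement about $R$ and the dual complex, independent of the sheaves $\Bsheaf_X$; for $q=0$ it is exactly the isomorphism \eqref{bibblan} of \cite[Theorem~1.5]{ANoether}, and for general $q$ I would establish it along the lines of \cite[Theorem~1.2]{ANoether}, using the Buchsbaum--Eisenbud structure of \eqref{karvkplx} together with the fact that the minimal resolution is a direct summand of any resolution, and finally propagating the identity from the Cohen--Macaulay locus $\Xpreg$ (where the argument in the proof of Proposition~\ref{hard} applies) to all of $X$ by the SEP. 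Granting this, $\Phi$ is an isomorphism.

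Finally, the stated equivalence follows as a corollary. The augmentation $\Ba_X^{n-p}\hookrightarrow\Bsheaf_X^{n-p,0}$ is injective and its image is exactly $\ker(\debar\colon\Bsheaf_X^{n-p,0}\to\Bsheaf_X^{n-p,1})$ by Definition~\ref{barlet}, so \eqref{knappar} is always exact at $\Ba_X^{n-p}$ and at $\Bsheaf_X^{n-p,0}$; thus \eqref{knappar} is exact if and only if $\HH^q(\Bsheaf_X^{n-p,\bullet},\debar)=0$ for $1\le q\le n$. By the isomorphism this holds if and only if $\Ext_{\hol_D}^{\kappa+q}(\Om_X^p,\hol_D)=0$ for all $q\ge 1$, and since $\Om_X^p$ has pure codimension $\kappa$ this is precisely the condition that $\Om_X^p$ be Cohen--Macaulay.
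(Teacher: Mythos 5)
Your architecture coincides with the paper's: the paper defines exactly your map (there called $\varrho_\bullet\colon(\hol(E^*_{\kappa+\bullet}),f^*_{\kappa+\bullet})\to(\Bsheaf_X^{n-p,\bullet},\debar)$, $i_*\varrho(\xi)=\xi\cdot\mathcal{R}_{\kappa+\bullet}$), observes that it is a map of complexes because $f\mathcal{R}=\debar\mathcal{R}$, and then \emph{cites} the proofs of \cite[Theorem~1.7]{SK} and \cite[Theorem~1.2]{RSW} for the fact that it induces an isomorphism on cohomology. The deduction of the Cohen--Macaulay equivalence from the vanishing of $\Ext^{\kappa+q}_{\hol_D}(\Om_X^p,\hol_D)$ is then essentially as in your last paragraph (the paper runs the converse through the singularity sets $Z_{\kappa+q}$ and an explicit truncation of the resolution rather than through Auslander--Buchsbaum, but both work), and your handling of the augmentation term is fine. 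Your surjectivity mechanism --- Koppelman with a weight holomorphic in $\zeta$, bidegree count forcing $\check P\mu=\xi\cdot\mathcal{R}_{\kappa+q}$ --- is also the right one.

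The genuine gap is exactly where you place it, but the sub-statement you propose to prove there is not the right target and is, I believe, false in general. From $\debar(\xi\cdot\mathcal{R}_{\kappa+q})=\pm f^*_{\kappa+q+1}\xi\cdot\mathcal{R}_{\kappa+q+1}=0$ you want to conclude $f^*_{\kappa+q+1}\xi=0$. However, the map $\eta\mapsto\eta\cdot\mathcal{R}_{\kappa+q+1}$ on $\hol(E^*_{\kappa+q+1})$ has a large kernel: by Proposition~\ref{absolut}, $\mathcal{R}_{\kappa+q+1}=a_{q+1}\cdots a_1\mathcal{R}_\kappa$ with the $a_j$ smooth generically on $Z$, so generically any $\eta$ annihilating the image of this (low-rank) smooth factor kills $\mathcal{R}_{\kappa+q+1}$, and nothing prevents such $\eta$ from lying in $\Image f^*_{\kappa+q+1}$ when the resolution is non-minimal or $\Om_X^p$ fails to be Cohen--Macaulay somewhere. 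What one actually has to prove is the statement at the level of classes: that $[\xi\cdot\mathcal{R}_{\kappa+q}]$ can be represented as $[\xi'\cdot\mathcal{R}_{\kappa+q}]$ with $f^*_{\kappa+q+1}\xi'=0$, and that $\xi\cdot\mathcal{R}_{\kappa+q}$ being $\debar$-exact in $\Bsheaf_X^{n-p,\bullet}$ forces $[\xi]=0$ in $\HH^{\kappa+q}(\hol(E^*_\bullet),f^*_\bullet)$; this requires a second application of the integral operators and the residue-duality input from \cite{ANoether}, not a pointwise annihilation argument. As written, your plan stalls at this step; you should either import \cite[Theorem~1.2]{RSW} and \cite[Theorem~1.7]{SK} wholesale, as the paper does, or reformulate the missing lemma in terms of cohomology classes before attempting the Buchsbaum--Eisenbud/SEP argument you sketch.
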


\begin{proof}
Consider the free Hermitian resolution \eqref{karvkplx} of $\Om_X^p$ and let $R$ and $\mathcal{R}$ be the associated currents.
Let $(\hol(E^*_\bullet),f^*_\bullet)$ be the dual complex of \eqref{karvkplx}, cf.\ the proof of Proposition~\ref{CHvsR}.
It is well-known that
\begin{equation}\label{knapp}
\HH^{\kappa+ q}(\hol(E^*_\bullet),f^*_\bullet) \simeq \Ext_{\hol_D}^{\kappa+q}(\Om_X^p,\hol_D).
\end{equation}
Define the mapping 
\begin{equation*}
\varrho_\bullet\colon (\hol(E^*_{\kappa+\bullet}),f^*_{\kappa+\bullet}) \to (\Bsheaf_X^{n-p,\bullet},\debar),\quad
i_*\varrho(\xi)=\xi\cdot \mathcal{R}_{\kappa+\bullet}.
\end{equation*}
Since $f\mathcal{R}=\debar\mathcal{R}$ it follows that $\varrho_\bullet$ is a map of complexes and hence induces a 
map on cohomology. As in the proof of \cite[Theorem~1.7]{SK}, cf.\ also the proof of \cite[Theorem~1.2]{RSW},
one shows that the map on cohomology is an isomorphism. Hence, the last statement of the theorem follows. 

If $\Om_X^p$ is Cohen--Macaulay we can choose the free resolution \eqref{karvkplx} 
of length $\kappa$. Thus, by \eqref{knapp}, $\Ext_{\hol_D}^{\kappa+q}(\Om_X^p,\hol_D)=0$ for $q\geq 1$ and so
$\HH^q(\Bsheaf_X^{n-p,\bullet},\debar)=0$ for $q\geq 1$. Hence \eqref{knappar} is exact.
Conversely, if \eqref{knappar} is exact then $\Ext_{\hol_D}^{\kappa+q}(\Om_X^p,\hol_D)=0$ for $q\geq 1$.
Recall the singularity subvarieties $Z_k:=Z_k^{\Om_X^p}$ associated with $\Om_X^p$, cf.\ \eqref{pinne2}.
From, e.g., the proof of \cite[Theorem~II.2.1]{BS} it follows that
\begin{equation*}
Z_{\kappa+q}=\bigcup_{r\geq \kappa+q} \text{supp}\, \Ext_{\hol_D}^r(\Om_X^p,\hol_D).
\end{equation*}
Hence, $Z_{\kappa+q}=\emptyset$ for $q\geq 1$. It follows that $\text{Im}\, f_{\kappa+1}\subset E_\kappa$ is a subbundle.
Replacing $E_\kappa$ by $E_\kappa/\text{Im}\, f_{\kappa+1}$ and $E_{\kappa+q}$, $q\geq 1$, by $0$ in \eqref{karvkplx}
we obtain a free resolution of $\Om_X^p$ of length $\kappa$. Thus, $\Om_X^p$ is Cohen--Macaulay.
\end{proof}

\section{Serre duality}\label{serresektion}
In this section $X$ is a pure $n$-dimensional analytic space. When considering local problems we tacitly assume that $X$ is an
analytic subset of some pseudoconvex domain $D\subset \C^N$.

Let $\varphi\in\A_X^{p,*}$ and $\mu\in\Bsheaf_X^{n-p,*}$. By Proposition~\ref{hemma}, on $\Xpreg$ $\varphi$ is smooth and 
$\mu=\sum_\ell\mu_\ell\wedge\omega_\ell$, where $\mu_\ell$ are smooth and $\omega_\ell\in\Ba_X^{n-p}$. Hence,
$\varphi\wedge\mu$ is well-defined in $\W_X^{n,*}$ on $\Xpreg$.

\begin{theorem}\label{trace}
There is a unique map $\wedge\colon\A_X^{p,*}\times\Bsheaf_X^{n-p,*}\to\W_X^{n,*}$ extending the wedge product on $\Xpreg$. 
If $\varphi\in\A_X^{p,*}$ and $\mu\in\Bsheaf_X^{n-p,*}$, then $\debar(\varphi\wedge\mu)\in\W_X^{n,*}$ and
\begin{equation}\label{leibniz}
\debar (\varphi\wedge\mu)=\debar\varphi\wedge\mu + (-1)^{\text{deg}\,\varphi}\varphi\wedge\debar\mu.
\end{equation}
\end{theorem}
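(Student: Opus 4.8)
The plan is to reduce every assertion to the dense open set $\Xpreg$ by means of the standard extension property, and to produce the extension concretely from the pushforward representations of $\varphi$ and $\mu$. I begin with \emph{uniqueness}. Two sections of $\W_X^{n,*}$ that agree on $\Xpreg$ agree everywhere: their difference lies in $\W_X^{n,*}$ and vanishes on $\Xpreg$, and since $Z\setminus\Xpreg$ meets $Z$ in positive codimension, the SEP forces it to vanish. Hence any two maps $\wedge$ extending the product on $\Xpreg$ coincide, and, more generally, it suffices to verify each identity below after restriction to $\Xpreg$, as soon as all the currents involved are known to lie in $\W_X^{n,*}$.

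For \emph{existence}, fix $x\in X$ and write $\mu$ locally as in \eqref{tjyv}. As in the proof of Proposition~\ref{hemma} we have $i_*\mu=\pi^\nu_*(T\wedge\xi)$, with $\xi$ smooth and $T$ the product current \eqref{matdax}, whose factor on the output slot $z^\nu$ is $\mathcal{R}(z^\nu)=a_0(z^\nu)\cdot i_*\omega_0(z^\nu)$, where $a_0$ is almost semi-meromorphic and $\omega_0$ is a tuple in $\Ba_X^{n-p}$ (Proposition~\ref{absolut}). Since $\varphi\in\A_X^{p,*}\subset\V_X^{p,*}=\Hom_{\hol_X}(\Ba_X^{n-p},\W_X^{n,*})$ acts on $\omega_0$, producing $\varphi\wedge\omega_0\in\W_X^{n,*}$, I would set $i_*(\varphi\wedge\mu):=\pi^\nu_*(T^\varphi\wedge\xi)$, where $T^\varphi$ is obtained from $T$ by replacing $\mathcal{R}(z^\nu)$ with $a_0(z^\nu)\wedge(\varphi\wedge\omega_0)(z^\nu)$. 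The current $T^\varphi$ is again a product of almost semi-meromorphic currents with a tensor product of sections of $\W^{n,*}$ and of the remaining factors $\mathcal{R}(z^j)$, so by the argument of Lemma~\ref{yster} it is pseudomeromorphic with support in $Z\times\cdots\times Z$ and the SEP; by \eqref{SEPprojformel} its pushforward lies in $\W_X^{n,*}$. On $\Xpreg$ it agrees with the naive product $\sum_\ell\pm\mu_\ell\wedge(\varphi\wedge\omega_\ell)$ furnished by Lemma~\ref{rutat}, so by the uniqueness above it is independent of the representation \eqref{tjyv} and of the chosen resolution, and gives the desired extension.

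For the \emph{Leibniz rule}, note that $\debar\varphi\in\A_X^{p,*+1}$ and $\debar\mu\in\Bsheaf_X^{n-p,*+1}$ by (a4) and (b3), so the products $\debar\varphi\wedge\mu$ and $\varphi\wedge\debar\mu$ are already defined in $\W_X^{n,*}$ by the previous step; thus the right-hand side of \eqref{leibniz} lies in $\W_X^{n,*}$. For the left-hand side, applying Lemma~\ref{ASmain} to $T^\varphi$ exactly as in the computation \eqref{smuggla} shows that $\debar i_*(\varphi\wedge\mu)$ has the SEP, whence $\debar(\varphi\wedge\mu)\in\W_X^{n,*}$. It then remains only to check \eqref{leibniz} on $\Xpreg$, where $\varphi$ is smooth and $\mu=\sum_\ell\mu_\ell\wedge\omega_\ell$ with $\mu_\ell$ smooth and $\omega_\ell\in\Ba_X^{n-p}$ $\debar$-closed; there it is the ordinary Leibniz rule for smooth forms wedged with $\debar$-closed currents. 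By the SEP the identity then holds on all of $X$.

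The step I expect to be the main obstacle is existence: one must verify that letting $\varphi$ act $\hol_X$-linearly on the outermost structure-form factor $\omega_0$ and then pushing forward genuinely yields a pseudomeromorphic current with the SEP — in other words, that the action of $\varphi$ "commutes through" the integral operators building $\mu$ so that $T^\varphi$ retains precisely the product structure required by Lemmas~\ref{yster} and \ref{ASmain}.
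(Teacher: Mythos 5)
Your overall strategy coincides with the paper's: uniqueness and the Leibniz rule \eqref{leibniz} are reduced to $\Xpreg$ by the SEP once one knows that $\varphi\wedge\mu$ and $\debar(\varphi\wedge\mu)$ lie in $\W_X^{n,*}$, and the latter regularity is to be extracted from a product-current representation via Lemma~\ref{ASmain} and the computation \eqref{smuggla}. The construction of the extension, however, is different: you unwind only $\mu$ into its chain \eqref{matdax} and let $\varphi$ act, as an element of $\Hom_{\hol_X}(\Ba_X^{n-p},\W_X^{n,*})$, on the structure-form factor in the output slot, whereas the paper also unwinds $\varphi=K\phi$, writing $i_*\varphi=\pi_*T_\varphi$ and $i_*\mu=\tilde\pi_*T_\mu$ and setting $i_*(\mu\wedge\varphi)=\pi_*\tilde\pi_*(T_\mu\wedge T_\varphi)$; the case $\nu=0$ (i.e.\ $\varphi$ smooth) is treated separately there, using that $\debar\mu\in\Bsheaf_X^{n-p,*}$.

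The gap is exactly where you flagged it. Your current $T^\varphi$ carries, in the output slot, the factor $a_0\wedge i_*(\varphi\wedge\omega_0)=i_*(\varphi\wedge\omega)$, which is a general element of $i_*\W_X^{n,*}$ and not a residue current $\mathcal{R}$ of a Hermitian resolution. Hence $T^\varphi$ is not of the form \eqref{matdax}, and Lemma~\ref{ASmain}, whose proof depends on that specific chain structure $\mathcal{R}(z^\nu)\wedge k(z^{\nu-1},z^\nu)\wedge\cdots$, cannot be invoked for it; this is precisely the step needed to show that $\debar\, i_*(\varphi\wedge\mu)$ has the SEP. (The membership $\varphi\wedge\mu\in\W_X^{n,*}$ itself is unproblematic, since a product of an almost semi-meromorphic current with a tensor product of $\W$-currents is again in $\W$, and your identification with the naive product on $\Xpreg$ is the same regularization argument as in Theorem~\ref{knall}.) The repair is to expand $i_*(\varphi\wedge\omega)$ in turn, using the representation \eqref{skurk} of $\varphi$, as a pushforward of a chain $T_\varphi$; substituting this into your $T^\varphi$ yields exactly the paper's $T_\mu\wedge T_\varphi$ --- two chains joined at the output variable with a single $\mathcal{R}$ at the junction, still of the form \eqref{matdax} --- to which Lemma~\ref{ASmain} does apply. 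So your construction is ultimately equivalent to the paper's, but the proof is incomplete without this second unwinding.
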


\begin{proof}
This is a local statement.
The uniqueness is clear by the SEP. Moreover, if $\varphi\wedge\mu\in\W_X^{n,*}$ and $\debar(\varphi\wedge\mu)\in\W_X^{n,*}$ for all
$\varphi\in\A_X^{p,*}$ and $\mu\in\Bsheaf_X^{n-p,*}$, then \eqref{leibniz} follows since it holds on $\Xpreg$ and both the left-hand side and
the right-hand side have the SEP.

To show that $\varphi\wedge\mu\in\W_X^{n,*}$ and $\debar(\varphi\wedge\mu)\in\W_X^{n,*}$ we represent
$\varphi$ and $\mu$ by \eqref{skurk} and \eqref{tjyv}, respectively. The case when $\nu=0$ in \eqref{skurk} needs to be handled separately.
In this case $\varphi\in\E_X^{p,*}$ and so
clearly $\varphi\wedge\mu\in\W_X^{n,*}$. Moreover, since by Proposition~\ref{hemma}, $\debar\mu\in\Bsheaf_X^{n-p,*}$
it follows that $\debar(\varphi\wedge\mu)\in\W_X^{n,*}$. 

Assume now that $\nu>0$ in \eqref{skurk}. Then, 
cf.\ \eqref{matdax},
\begin{equation*}
i_*\varphi(\zeta)=\pi_*\big(k(w^\nu,\zeta)\wedge k(w^{\nu-1},w^\nu)\wedge\cdots\wedge k(w^1,w^2)\wedge\xi\big)
=:\pi_* T_\varphi,
\end{equation*} 
where $\pi\colon D_\zeta\times D_{w^\nu}\times\cdots \times D_{w^1}\to D_\zeta$ is the natural projection, $\nu\geq 1$, and $\xi$ is
smooth. Hence, $\varphi=K\phi$ for an appropriate $\phi(w^\nu)\in\A_X^{p,*}$ and so, in view of Section~\ref{intop}, 
since $\mu\in\Bsheaf_X^{n-p,*}\subset\W_X^{n-p,*}$ we have
\begin{equation*}
i_* \mu\wedge \varphi= i_*\mu\wedge K\phi =\pi_*\big(\mu(\zeta)\wedge T_\varphi\big).
\end{equation*}
On $\Xpreg$, where $\varphi$ is smooth, this coincides with the natural wedge product, cf.\ the proof of Theorem~\ref{knall}.
In view of Definition~\ref{Bdef} we may assume that
\begin{equation*}
i_*\mu(\zeta)=\tilde\pi_*\big(\mathcal{R}(z^{\tilde\nu})\wedge k(z^{\tilde\nu -1},z^{\tilde\nu})\wedge\cdots\wedge k(\zeta,z^1)\wedge\tilde\xi\big)
=:\tilde\pi_* T_\mu,
\end{equation*}
where $\tilde\pi\colon D_\zeta\times D_{z^{\tilde\nu}}\times\cdots \times D_{z^1}\to D_\zeta$ is the natural projection and 
$\tilde\xi=\tilde\xi(\zeta,z)$ is smooth.
Hence,
\begin{equation*}
i_*\mu\wedge\varphi = \pi_*\tilde\pi_*\big(T_\mu\wedge T_\varphi\big).
\end{equation*}
Since $T_\mu\wedge T_\varphi$ is of the form \eqref{matdax} (times $\xi\wedge\tilde\xi)$ it follows that $\mu\wedge\varphi\in\W_X^{n,*}$. 
Moreover, by Lemma~\ref{ASmain} and the computation \eqref{smuggla}, with $\omega$ and $T$ replaced by $\mu$ and $T_\mu\wedge T_\varphi$,
respectively,  
it follows that $\debar (\mu\wedge\varphi)$ has the SEP.
\end{proof}

Let $\Bsheaf_c^{n-p,*}(X)$ be the vector space of sections of $\Bsheaf_X^{n-p,*}$ with compact support. By Theorem~\ref{trace} we have,
cf.\ Definition~\ref{cool}, 
a bilinear pairing
\begin{equation}\label{kennel}
\A^{p,q}(X)\times \Bsheaf_c^{n-p,n-q}(X)\to\C\quad
(\varphi,\mu) \mapsto \int_X \varphi\wedge\mu, 
\end{equation}
which only depends on the class of $\mu$ in $H^{n-q}(\Bsheaf_c^{n-p,\bullet}(X),\debar)$ and the class of 
$\varphi$ in $H^q(\A^{p,q}(X),\debar)$. In particular, since $H^0(\A^{p,\bullet}(X),\debar)=\Om^p(X)$
we have a pairing 
$\Om_X^{p}(X)\times H^{n}(\Bsheaf_c^{n-p,\bullet}(X),\debar)\to\C$.

\begin{theorem}\label{lokal}
Assume that $X$ is an analytic subspace of a pseudoconvex domain $D\subset \C^N$. Then
\begin{equation*}
0\to \Bsheaf_c^{n-p,0}(X)\stackrel{\debar}{\longrightarrow}\cdots\stackrel{\debar}{\longrightarrow}\Bsheaf_c^{n-p,n}(X)\to 0
\end{equation*}
is exact except on level $n$. The pairing \eqref{kennel} makes 
$H^{n}(\Bsheaf_c^{n-p,\bullet}(X),\debar)$ the (topological) dual
of $\Om_X^{p}(X)$.
\end{theorem}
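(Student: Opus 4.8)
The plan is to derive the statement from the functional-analytic form of Serre duality, the point being that a pseudoconvex $D$ makes $X$ Stein. First I would record the cohomological inputs on the $\A$-side. Since \eqref{sol5} resolves $\Om_X^p$ by fine sheaves (Theorem~A) and $X$ is Stein, Cartan's Theorem~B gives $H^q(\A^{p,\bullet}(X),\debar)=0$ for $q\ge 1$ and $H^0(\A^{p,\bullet}(X),\debar)=\Om_X^p(X)$; in particular every $\debar\colon\A^{p,q}(X)\to\A^{p,q+1}(X)$ has closed range, so all of these cohomologies are Hausdorff. Next I would check that the pairing \eqref{kennel} descends to cohomology: by Theorem~\ref{trace} the products $\varphi\wedge\mu$ and $\debar(\varphi\wedge\mu)$ lie in $\W_X^{n,*}$ and obey the Leibniz rule \eqref{leibniz}, while for a compactly supported $(n,n-1)$-current $T$ one has $\int_X\debar T=(\debar T).i^*1=\pm T.\debar(i^*1)=0$ because $\debar(i^*1)=i^*\debar 1=0$ (Definition~\ref{cool}). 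Thus replacing $\varphi$ or $\mu$ by a $\debar$-coboundary changes $\int_X\varphi\wedge\mu$ by the integral of a compactly supported $\debar$-exact current, i.e.\ by $0$, and the pairing only depends on $[\varphi]\in H^q(\A^{p,\bullet}(X))$ and $[\mu]\in H^{n-q}(\Bsheaf_c^{n-p,\bullet}(X))$.

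The crux is to prove that $\bar\jmath\colon H^n(\Bsheaf_c^{n-p,\bullet}(X),\debar)\to(\Om_X^p(X))'$, $[\mu]\mapsto(\varphi\mapsto\int_X\varphi\wedge\mu)$, is a topological isomorphism. For surjectivity I would start from $L\in(\Om_X^p(X))'$; since $\Om_X^p(X)$ is a closed subspace of the Fréchet space $\A^{p,0}(X)$ and $L$ is carried by some relatively compact $D'\Subset D$, Hahn--Banach extends it to a continuous functional $t$ on $\A^{p,0}(X)$ supported in $D'$. Let $P$ be the reproducing operator of Proposition~\ref{kolja} built from the weight of Example~\ref{viktigt}, which is holomorphic in $z$ and compactly supported in $\zeta$; by the proof of Theorem~A one has $P\varphi=\varphi$ on $D'$ for every $\varphi\in\Om_X^p(X)$. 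I would then define $\mu:=\psi\wedge\omega$, where $\psi(\zeta):=\langle t(z),\vartheta(g\wedge H)(\zeta,z)\rangle$ and $i_*\omega=\mathcal{R}(\zeta)$; since the kernel $\vartheta(g\wedge H)\mathcal{R}(\zeta)$ carries the structure form in the $\zeta$-variable and is smooth in $z$, the form $\psi$ is smooth and $\mu$ is a section of $\Bsheaf_c^{n-p,n}(X)$ (the case $\nu=0$ of \eqref{tjyv}). Fubini in the kernel then gives $\int_X\varphi\wedge\mu=\langle t,P\varphi\rangle$, and because $t$ is carried by $D'$, where $P\varphi=\varphi$, this equals $\langle t,\varphi\rangle=L(\varphi)$; hence $\bar\jmath[\mu]=L$.

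Injectivity of $\bar\jmath$ — equivalently, that a $\mu\in\Bsheaf_c^{n-p,n}(X)$ with $\int_X\varphi\wedge\mu=0$ for all $\varphi\in\Om_X^p(X)$ is $\debar$-exact inside $\Bsheaf_c$ — is where I expect the main difficulty. The closed-range property from the first step lets me invoke Serre's lemma ($\operatorname{coker}\debar'\simeq(\ker\debar)'$) to write the functional $j(\mu)$ on $\A^{p,0}(X)$ as $\debar's$ for a compactly supported $s\in(\A^{p,1}(X))'$; the task is then to convert this abstract primitive into an explicit primitive $\nu\in\Bsheaf_c^{n-p,n-1}(X)$ with $\debar\nu=\mu$. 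For this I would transpose the homotopy identity \eqref{ortodox}, applying the transpose of the operator $K$ to the current $s$ to manufacture $\nu$, and the delicate points are that $\Bsheaf_c^{n-p,n-q}(X)$ is only a proper subspace of the topological dual of $\A^{p,q}(X)$ (so Serre's lemma does not by itself identify the complexes) and that one must verify the manufactured $\nu$ again has the special form \eqref{tjyv}; this is exactly where the explicit integral operators $\check K$ of Section~\ref{intop} and their mapping properties in Proposition~\ref{hemma} are indispensable. Running the same argument in each degree shows that the induced maps $H^{n-q}(\Bsheaf_c^{n-p,\bullet}(X))\to(H^q(\A^{p,\bullet}(X)))'$ are injective for all $q$; since the target vanishes for $q\ge 1$, this yields the asserted exactness of the compactly supported complex below level $n$, and at $q=0$ the isomorphism $H^n(\Bsheaf_c^{n-p,\bullet}(X))\simeq(\Om_X^p(X))'$.
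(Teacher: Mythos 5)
Your surjectivity argument is essentially the paper's: Hahn--Banach, a representing current $\nu$ carried by a holomorphically convex compact, and the identity $\lambda(\varphi)=\pm\int\phi\wedge\pi^1_{*}(\vartheta(g\wedge H)\wedge\nu)\mathcal{R}$ producing $\mu\in\Bsheaf_c^{n-p,n}(X)$ of the form (smooth form)$\wedge\omega$. The problem is everything else. For injectivity you explicitly defer the main step: you propose to obtain an abstract primitive from Serre's closed-range lemma and then ``convert'' it into an element of $\Bsheaf_c^{n-p,n-1}(X)$ by transposing the homotopy identity, and you yourself flag that $\Bsheaf_c^{n-p,*}(X)$ is only a proper subspace of the dual of $\A^{p,*}(X)$ and that the manufactured primitive must be shown to have the form \eqref{tjyv}. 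None of this is carried out, and it is not needed: the paper's argument is direct. Since the Koppelman formula \eqref{oortodox} holds for sections of $\Bsheaf_X^{n-p,*}$ (Proposition~\ref{hemma}), one may assume $\mu=\check P\mu$, and then \eqref{korka} gives the explicit expression $i_*\mu=\pm\,\pi^1_{*}\bigl(\vartheta(g\wedge H)\wedge i_*\mu(z)\bigr)\mathcal{R}(\zeta)$ with $g,H$ holomorphic in $z$ near the holomorphically convex hull of $\operatorname{supp}\mu$; if $\int_X\varphi\wedge\mu=0$ for all $\varphi\in\Om^p(X)$, the Oka--Weil theorem forces $\pi^1_{*}(\vartheta(g\wedge H)\wedge i_*\mu(z))=0$, hence $\mu=0$. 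No abstract functional analysis on the $\Bsheaf$-side is required.

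Your route to exactness below level $n$ is also defective, and arguably circular. You want to deduce $H^{n-q}(\Bsheaf_c^{n-p,\bullet}(X),\debar)=0$ for $q\ge 1$ from injectivity of the pairing map into $(H^q(\A^{p,\bullet}(X)))'=0$; but injectivity of that map in degree $n-q$ \emph{is} the statement that every $\debar$-closed class pairing to zero is exact, which --- the target being zero --- is exactly the vanishing you are trying to prove. So you still owe an independent argument, and the one the paper gives is again the Koppelman formula: choosing the weight of Example~\ref{viktigt}, $\vartheta(g\wedge H)\mathcal{R}(\zeta)$ contains no $d\bar z_j$, so $\check P\mu=0$ for bidegree reasons whenever $q\ge 1$, whence $\mu=\debar\check K\mu$ with $\check K\mu\in\Bsheaf_c^{n-p,n-q-1}(X)$ by Proposition~\ref{hemma} and the compact support of $\zeta\mapsto g(\zeta,z)$. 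You should replace both the injectivity and the exactness steps by these direct kernel arguments.
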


Recall that the topology on $\Om^p(X)=\Om^p(D)/\mathcal{J}^p(D)$ is the quotient topology
and that $\Om^p(X)$ is a Fr\'echet space with this topology, see, e.g.,
\cite[Ch.\ IX]{Dem}. Notice that since convergence in $\Om^p(D)$ is uniform convergence on compact subsets, 
a sequence $\varphi_\epsilon\in\Om^p(X)$
converges to $0$ if there are $\tilde\varphi_\epsilon\in\Om^p(D)$ such that $\varphi_\epsilon=i^*\tilde\varphi_\epsilon$
and $\tilde\varphi_\epsilon\to 0$ uniformly on compacts. By the Cauchy estimates it follows that $\tilde\varphi_\epsilon\to 0$
in $\E^{p,0}(D)$.

\begin{proof}
Let $\mu\in \Bsheaf_c^{n-p,n-q}(X)$ be $\debar$-closed. Choose the weight $g$ in the operators $\check K$ and $\check P$ of
Section~\ref{intop} such that $z\mapsto g(\zeta,z)$ is holomorphic in a neighborhood of the holomorphically compact closure of
$\text{supp}\,\mu$ and
$\zeta\mapsto g(\zeta,z)$ has support in a fixed compact for all $z$ in that neighborhood, cf.\ Example~\ref{viktigt}.
Then $\vartheta(g\wedge H)\mathcal{R}(\zeta)$ has degree $0$ in $d\bar{z}$ and so $\check P \mu=0$ if $q\geq 1$, cf.\ \eqref{korka}.
Since by Proposition~\ref{hemma} the Koppelman formula \eqref{oortodox} holds we conclude that $\mu=\debar\check K\mu$
if $q\geq 1$. 
Since $\zeta\mapsto g(\zeta,z)$ has compact support also $\check K\mu$ has and the first statement of the theorem follows.

Suppose now that $\mu\in \Bsheaf_c^{n-p,n}(X)$. Since convergence of a sequence in $\Om^p(X)$ implies convergence
in $\E^{p,0}(D)$ for a suitable sequence of representatives it follows that $\mu$ defines a continuous linear functional $\tilde\mu$
on $\Om^p(X)$ via \eqref{kennel}. This functional only depends on the cohomology class of $\mu$ and so we can,
in view of \eqref{oortodox}, assume that $\mu=\check P\mu$. We have
\begin{equation*}
\check P\mu = \pi^1_{*}\big(\vartheta(g\wedge H)\mathcal{R}(\zeta)\wedge i_*\mu(z)\big) = 
\pm \pi^1_{*}\big(\vartheta(g\wedge H)\wedge i_*\mu(z)\big) \mathcal{R}(\zeta),
\end{equation*}
where $\pi^1\colon D_\zeta\times D_z\to D_\zeta$. Since $g$ and $H$ are holomorphic for $z$ in a neighborhood of 
the holomorphically compact closure of
$\text{supp}\,\mu$ it follows from the Oka-Weil theorem that if $\int_X\varphi\wedge\mu=0$
for all $\varphi\in\Om^p(X)$, then $\pi^1_{*}\big(\vartheta(g\wedge H)\wedge i_*\mu(z)\big)=0$. 
Hence, $\tilde\mu=0$ implies $\mu=0$, i.e., the map $\mu\mapsto \tilde\mu$ is injective. 

To show surjectivity, let $\lambda$ be a continuous linear functional on $\Om^p(X)$. Then $\lambda$ lifts to a 
functional, also denoted $\lambda$, on $\Om^p(D)$. By the Hahn-Banach theorem this functional has to be carried by some 
compact $G\subset D$, which 
we may assume is holomorphically convex,
and there is an $(N-p,N)$-current $\nu$ in $D$ of order $0$ with compact support in a neighborhood $V$ of $G$
such that 
\begin{equation*}
\lambda(\varphi) = \int_D \varphi\wedge\nu,\quad \varphi\in\Om^p(D).
\end{equation*} 
Let $P$ be an operator corresponding to a choice of weight $g$ such that $z\mapsto g(\zeta,z)$ is holomorphic in $V$ and 
$\zeta\mapsto g(\zeta,z)$ has support in a fixed compact subset of $D$ for all $z\in V$. Then, if $\varphi\in\Om^p(X)$,
$P\varphi$ is an extension of $\varphi$ to $V$. Let also $\phi\in\Om^p(D)$ be an arbitrary representative of $\varphi$. Then
\begin{eqnarray*}
\lambda(\varphi) &=& \lambda(P\varphi) = \int_D P\varphi(z)\wedge\nu(z) =
\int_D\pi^2_{*}\big(\vartheta(g\wedge H)\mathcal{R}(\zeta)\wedge\phi(\zeta)\big)\wedge\nu(z) \\
&=&
\big(\vartheta(g\wedge H)\mathcal{R}(\zeta)\wedge\phi(\zeta)\wedge\nu(z)\big) . 1_{D\times D} \\
&=&
\pm \int_D \phi(\zeta)\wedge\pi^1_{*}\big(\vartheta(g\wedge H)\wedge\nu(z)\big)\mathcal{R}(\zeta).
\end{eqnarray*}
Since $\pi^1_{*}\big(\vartheta(g\wedge H)\wedge\nu(z)\big)$ is smooth with compact support in $D$ it follows that there is 
$\mu\in\Bsheaf_c^{n-p,n}(X)$
such that
\begin{equation*}
\pi^1_{*}\big(\vartheta(g\wedge H)\wedge\nu(z)\big)\mathcal{R}(\zeta) = i_*\mu.
\end{equation*}
Hence,
\begin{equation*}
\lambda(\varphi) = \int_D \phi\wedge i_*\mu = \int_X \varphi\wedge\mu.
\end{equation*}
\end{proof}

\begin{theorem}\label{kung}
Let $X$ be a (paracompact) analytic space of pure dimension $n$. If
$H^q(X,\Om_X^p)$ and $H^{q+1}(X,\Om_X^p)$ are Hausdorff, then the pairing
\begin{equation}\label{klocka}
H^q(\A^{p,\bullet}(X),\debar)\times H^{n-q}(\Bsheaf_c^{n-p,\bullet}(X),\debar) \to \C,\quad
([\varphi],[\mu])\mapsto \int_X\varphi\wedge\mu
\end{equation}
is non-degenerate so that $H^{n-q}(\Bsheaf_c^{n-p,\bullet}(X),\debar)$ is the dual of $H^q(\A^{p,\bullet}(X),\debar)$.
\end{theorem}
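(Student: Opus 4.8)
The plan is to derive the global statement from the local duality of Theorem~\ref{lokal} by means of the standard functional-analytic duality for complexes of topological vector spaces, following the scheme used in the reduced case in \cite{RSW,SK}. First I would invoke the de~Rham--Weil isomorphism: by Theorem~A the complex $\A_X^{p,\bullet}$ is a fine resolution of $\Om_X^p$, so that $H^q(\A^{p,\bullet}(X),\debar)\simeq H^q(X,\Om_X^p)$, while Proposition~\ref{hemma} and Theorem~\ref{Bupplosn} show that the $\Bsheaf_X^{n-p,\bullet}$ are fine and preserved by $\debar$. Each space $\A^{p,q}(X)$ carries a natural topology, inherited locally from $\E_D^{p,*}$ through the surjections $i^*$ and the integral operators of Section~\ref{intop}, making it a Fr\'echet space on which $\debar$ is continuous; the compactly supported spaces $\Bsheaf_c^{n-p,n-q}(X)$ are topologized as the corresponding (DF) duals. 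The hypothesis that $H^q(X,\Om_X^p)$ and $H^{q+1}(X,\Om_X^p)$ are Hausdorff is then precisely the statement that $\debar$ has closed range at the two maps $\A^{p,q-1}\to\A^{p,q}$ and $\A^{p,q}\to\A^{p,q+1}$.

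Next I would apply the topological duality for complexes: if a complex of such spaces has differential with closed range at the two maps adjacent to level $q$, then the cohomology at level $q$ of the dual complex is canonically the dual of $H^q$. Applied to $\A^{p,\bullet}(X)$, this reduces the theorem to showing that the cohomology at level $n-q$ of the dual complex $\big((\A^{p,\bullet}(X))',{}^t\debar\big)$ is computed by $\big(\Bsheaf_c^{n-p,n-\bullet}(X),\debar\big)$ through the pairing $(\varphi,\mu)\mapsto\int_X\varphi\wedge\mu$. That this pairing defines a morphism of complexes into $\C$ follows from Theorem~\ref{trace}: the product $\varphi\wedge\mu$ lies in $\W_X^{n,*}$, the Leibniz rule \eqref{leibniz} holds, and $\int_X\debar T=0$ for compactly supported $T\in\W_X^{n,n-1}$, so that the pairing descends to cohomology as in \eqref{klocka}.

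The main obstacle is this last identification, namely that $\mu\mapsto\big(\varphi\mapsto\int_X\varphi\wedge\mu\big)$ induces an isomorphism $H^{n-q}(\Bsheaf_c^{n-p,\bullet}(X),\debar)\xrightarrow{\sim}H^{n-q}\big((\A^{p,\bullet}(X))',{}^t\debar\big)$. To establish it I would pass from the local to the global duality. Fineness of all the sheaves involved provides partitions of unity, and over a pseudoconvex $D'\Subset D$ Theorem~\ref{lokal} gives exactly the local duality $H^n(\Bsheaf_c^{n-p,\bullet}(X\cap D'),\debar)\simeq\Om^p(X\cap D')'$ together with the vanishing of the compactly supported cohomology below top degree. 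A \v{C}ech--Mayer--Vietoris comparison then propagates this local isomorphism of dual complexes to the global one; the Hausdorff hypotheses guarantee that the relevant short exact sequences of topological vector spaces remain strict, so that forming duals commutes with passing to cohomology. Combining this identification with the topological duality theorem yields the non-degeneracy of the pairing \eqref{klocka} and the asserted isomorphism $H^{n-q}(\Bsheaf_c^{n-p,\bullet}(X),\debar)\simeq H^q(\A^{p,\bullet}(X),\debar)'$.
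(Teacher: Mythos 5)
There is a genuine gap, and it sits exactly at the point where you apply the abstract duality lemma for complexes of topological vector spaces. You apply it directly to $\big(\A^{p,\bullet}(X),\debar\big)$, which requires (a) a Fr\'echet (or at least suitably complete, metrizable) topology on each $\A^{p,q}(X)$ for which $\debar$ is continuous, (b) the identification of the Hausdorffness of $H^q(X,\Om_X^p)$ --- which is defined via the quotient topology on \v{C}ech cohomology of $\Om_X^p$ --- with closed range of $\debar$ in that topology, and (c) a concrete computation of the topological dual of $\A^{p,q}(X)$. None of these is available: the paper never topologizes $\A^{p,q}(X)$ (its elements are defined by representability as iterated integral operators applied to smooth forms, which carries no evident seminorm structure), and the local duality of Theorem~\ref{lokal} computes the dual of $\Om^p(X\cap D')$, not of $\A^{p,q}(X\cap D')$. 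Your ``\v{C}ech--Mayer--Vietoris comparison'' is asked to bridge precisely this mismatch, and as described it cannot: what you would need to patch together is an identification of the dual of the $\A$-complex with the $\Bsheaf_c$-complex, whereas the local input only dualizes holomorphic forms on Stein pieces.

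The paper's route avoids all of this by never dualizing the $\A$-complex. It fixes a Leray cover $\mathcal{U}$ by Stein-type pieces and applies the functional-analytic duality lemma (\cite[Lemma~2]{RaRu}) to the \v{C}ech complex $\big(C^\bullet(\mathcal{U},\Om^p_X),\delta\big)$, whose terms are genuine Fr\'echet spaces; this is where the Hausdorff hypothesis enters, giving $H^q(C^\bullet)^*\simeq H^q((C^\bullet)^*,\delta^*)$. Each dual group $C^k(\mathcal{U},\Om^p_X)^*$ is then identified, by the local Theorem~\ref{lokal}, with $H^n\big(C^{-k}(\mathcal{U},\Bsheaf_c^{n-p,\bullet}),\debar\big)$, and a double-complex argument --- using fineness of $\Bsheaf_X$ for $\delta^*$-exactness and the local vanishing of compactly supported $\debar$-cohomology below top degree --- converts $H^q$ of these column cohomologies into $H^{n-q}(\Bsheaf_c^{n-p,\bullet}(X),\debar)$. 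Your proposal correctly identifies the ingredients (Theorem~\ref{lokal}, fineness, Theorem~\ref{trace} for well-definedness of the pairing, the Hausdorff hypothesis feeding a Serre-type duality lemma), but the complex to which the duality lemma must be applied is the \v{C}ech complex of $\Om_X^p$, not $\A^{p,\bullet}(X)$; as written, the argument does not go through.
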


\begin{proof}[Sketch of proof]
Referring to, e.g., \cite[Section~6.2]{RSW} and \cite[Section~7.3]{SK} for details we outline a proof showing that 
$H^{n-q}(\Bsheaf_c^{n-p,\bullet}(X),\debar)$ is the dual of $H^q(\A^{p,\bullet}(X),\debar)$
via \eqref{klocka}. The idea is to use \v{C}ech cohomology and homological algebra to reduce to the local duality of Theorem~\ref{lokal}.

Let $\mathcal{U}=\{U_j\}_j$ be a locally finite covering of $X$ such that $U_j$ is an analytic subspace of some pseudoconvex domain $D$ 
in some $\C^N$. Then $\mathcal{U}$ is a Leray covering for $\Om^p_X$. Let $(C^\bullet(\mathcal{U},\Om^p_X),\delta)$
be the associated \v{C}ech cochain complex. Then
\begin{equation}\label{iso1}
H^q(\A^{p,\bullet}(X),\debar) \simeq H^q(C^\bullet(\mathcal{U},\Om^p_X),\delta)
\end{equation}
since both the left- and the right-hand sides are isomorphic to $H^q(X,\Om_X^p)$. It is standard to make the 
isomorphism \eqref{iso1} explicit by solving local $\debar$-equations. 

The Fr\'echet topology on $\Om^p(U_j)$ induces a natural Fr\'echet topology on $C^k(\mathcal{U},\Om^p_X)$
and, consequently, on the cohomology of $(C^\bullet(\mathcal{U},\Om^p_X),\delta)$.
Recall that the standard topology on $H^q(X,\Om_X^p)$ is defined as this topology.
In view of, e.g., \cite[Lemma~2]{RaRu} it follows that if 
$H^q(X,\Om_X^p)$ and $H^{q+1}(X,\Om_X^p)$ are Hausdorff, then 
\begin{equation}\label{iso2}
H^q(C^\bullet(\mathcal{U},\Om^p_X),\delta)^*\simeq H^q(C^\bullet(\mathcal{U},\Om^p_X)^*,\delta^*),
\end{equation}
where $(C^\bullet(\mathcal{U},\Om^p_X)^*,\delta^*)$ is the (topological) dual complex of 
$(C^\bullet(\mathcal{U},\Om^p_X),\delta)$.

Let $C^{-k}(\mathcal{U},\Bsheaf_c^{n-p,j})$ be the group of formal sums $\sum'_{|I|=k+1}\mu_I U^*_I$, where 
$\mu_I\in \Bsheaf_c^{n-p,j}(\cap_{i\in I}U_i)$ and $U^*_I:=U^*_{i_0}\wedge\cdots\wedge U^*_{i_k}$ is a formal
wedge product. It follows from Theorem~\ref{lokal} that
\begin{equation}\label{iso3}
C^k(\mathcal{U},\Om^p_X)^*\simeq H^n(C^{-k}(\mathcal{U},\Bsheaf_c^{n-p,\bullet}),\debar)
\end{equation}
via the pairing induced by \eqref{kennel}. The operator $\delta^*$ on $C^\bullet(\mathcal{U},\Om^p_X)^*$
gives in a natural way an operator, also denoted $\delta^*$, on $C^{-\bullet}(\mathcal{U},\Bsheaf_c^{n-p,j})$.
It turns out that this operator is formal interior multiplication by $\sum_\ell U_\ell$; $\mu_I$ is extended
to $\cap_{i\in I\setminus i_\ell}U_i$ by $0$.
Thus we get the double complex
\begin{equation}\label{double}
\big(C^{-\bullet}(\mathcal{U},\Bsheaf_c^{n-p,\bullet}), \delta^*,\debar\big).
\end{equation}
In view of \eqref{iso3} we have
\begin{equation}\label{iso4}
H^q(C^\bullet(\mathcal{U},\Om^p_X)^*,\delta^*) \simeq
H^q(H^n(C^{-\bullet}(\mathcal{U},\Bsheaf_c^{n-p,\bullet}),\delta^*,\debar)).
\end{equation}
By Theorem~\ref{Bupplosn}, the $\debar$-cohomology of \eqref{double} is trivial except on level $n$ and, by, e.g., \cite[Lemma~6.3]{RSW},
since the $\Bsheaf_X$-sheaves are fine, the $\delta^*$-cohomology of \eqref{double} is trivial except on level $0$ where the cohomology
is $\Bsheaf_c(X)^{n-p,\bullet}$. By standard homological algebra it follows that 
\begin{equation}\label{iso5}
H^q(H^n(C^{-\bullet}(\mathcal{U},\Bsheaf_c^{n-p,\bullet}),\delta^*),\debar) \simeq
H^{n-q}(\Bsheaf_c(X)^{n-p},\debar).
\end{equation}
From \eqref{iso1}, \eqref{iso2}, \eqref{iso4}, and \eqref{iso5} we see that $H^{n-q}(\Bsheaf_c(X)^{n-p},\debar)$ is the dual
of $H^q(\A^{p,\bullet}(X),\debar)$. To see that this duality is given by \eqref{klocka} one can make these isomorphisms explicit 
and use that \eqref{iso3} is induced by the pairing
\eqref{kennel}. 
\end{proof}

\begin{proof}[Proof of Theorem B]
Part (i) follows from Definition~\ref{Bdef} and Proposition~\ref{hemma}. 
Part (ii) follows from Theorem~\ref{Bupplosn}. 
Part (iii) follows from Theorem~\ref{trace}.
Part (iv) follows from Theorem~\ref{kung}; indeed, if $X$ is compact then we can replace $\Bsheaf_c^{n-p,\bullet}(X)$ by $\Bsheaf^{n-p,\bullet}(X)$
and, moreover, by the Cartan-Serre theorem, the cohomology of any coherent sheaf is finite-dimensional, in particular Hausdorff.
\end{proof}

\section{Examples}\label{Exsektion}
We present two examples which illustrate our various notions of holomorphic forms and currents.

\begin{example}\label{matlen}
Let $D=\C^{4}$ with coordinates $(z_{1},z_{2},w_{1},w_{2})$ and let $\mathcal{J}=\big\langle w_{1}^{2},w_{2}^{2},w_{1}w_{2}\big\rangle$. 
Then $d\mathcal{J}=\big\langle w_{1}d w_{1},w_{2}d w_{2}, d(w_{1}w_{2})\big\rangle$ and 
$\sqrt{\J}=\langle w_{1},w_{2}\rangle$ so that $Z=\{w=0\}$.  
 Clearly, $\hol_{X} = \hol_{Z}\{1,w_{1},w_{2}\}$. By Taylor expansion of the coefficients of elements of $\Om_D^1$
it follows that $\Om^{1}_{X,\text{K\"{a}hler}}=\Om^{1}_{Z}\{1,w_{1},w_{2}\}+\hol_{Z}\{d w_{1},d w_{2},w_{1}d w_{2},w_{2}dw_{1}\}$.
Since $w_1dw_2=-w_2dw_1+d(w_1w_2)$ the number of generators can be reduced and since $w_1dw_2-w_2dw_1=2w_1dw_2- d(w_1w_2)$  we get
\begin{equation*}
\Om^{1}_{X,\text{K\"{a}hler}}=\Om^{1}_{Z}\{1,w_{1},w_{2}\}+\hol_{Z}\{d w_{1},d w_{2},w_{1}d w_{2}-w_{2}dw_{1}\}.
\end{equation*}
Since $\J$ is independent of $z$ it follows that $\Om^{1}_{X,\text{K\"{a}hler}}$ is torsion-free, and so $\Om_X^1=\Om^{1}_{X,\text{K\"{a}hler}}$.
In a similar way,
\begin{equation*}
\Om^{2}_{X} 
=
\Om^{2}_{Z}\{1,w_{1},w_{2}\}+\Om^{1}_{Z}\wedge
\{d w_{1},d w_{2},w_{1}d w_{2}-w_{2}dw_{1}\} +\hol_{Z}\{d w_{1}\wedge d w_{2}\}.
\end{equation*}

In view of Proposition~\ref{rutschkana} there is an analogous description of the smooth forms on $X$. For instance,
\begin{equation*}
\mathscr{E}^{2,*}_{X}=\mathscr{E}^{2,*}_{Z}\{1,w_{1},w_{2}\}+\mathscr{E}^{1,*}_{Z}\wedge\{d w_{1},d w_{2},w_{1}d w_{2}-w_{2}d w_{1}\}
+\mathscr{E}^{0,*}_{Z}\wedge\{d w_{1}\wedge d w_{2}\}.
\end{equation*}

Now let us look at currents on $X$; for simplicity we restrict to currents of bidegree $(2,*)$.
If $\alpha$ is a $(2,*)$-current on $X$ then, by definition, $i_{*}\alpha$ is annihilated by $\Kernel_0i^*$, which contains all
$\bar{w}_{i}$ and $d\bar{w}_{i}$. It follows that
\begin{equation*}
i_{*}\alpha=\sum_{k,\ell\geq 0}\alpha_{k,\ell}(z) d z_{1}\wedge d z_{2}\wedge d w_{1}\wedge d w_{2}\wedge\debar\frac{1}{w_{1}^{k+1}}\wedge
\debar\frac{1}{w_{2}^{\ell+1}}\quad\text{with }\alpha_{k,\ell}\in\mathscr{C}^{0,*}_{Z},
\end{equation*}
where $\mathscr{C}_Z$ is the sheaf of currents on $Z$; cf.\ \eqref{grymta}.
However, we must also have $i_{*}\alpha\wedge\mathcal{J}=0$ and $i_{*}\alpha\wedge d\mathcal{J}=0$. The first equality 
implies that $k,\ell\leq 1$ and the second is automatically satisfied for degree reasons. 
Moreover, we have $w_{1}w_{2}\debar(d w_{1}/w_{1}^{2})\wedge\debar(d w_{2}/w_{2}^{2})\neq0$ and therefore
\begin{equation*}
i_{*}\mathscr{C}^{2,*}_{X}=\mathscr{C}^{2,*}_{Z}\bigg\{\debar\frac{d w_{1}}{w_{1}}\wedge\debar\frac{d w_{2}}{w_{2}},\,
\debar\frac{d w_{1}}{w_{1}^{2}}\wedge\debar\frac{d w_{2}}{w_{2}},\,\debar\frac{d w_{1}}{w_{1}}\wedge\debar\frac{d w_{2}}{w_{2}^{2}}\bigg\}.
\end{equation*}
Writing $\mathcal{B}$ for the set of three basis elements above, we get $i_{*}\Ba^{2}_{X}=\Om^{2}_{Z}\mathcal{B}$.
\end{example}

\begin{example}
Let $D=\C^{4}$ with the same coordinates as above. Let $i\colon X\to D$ be defined by 
\begin{equation*}
\mathcal{J}=\big\langle w_{1}^{2},w_{2}^{2},w_{1}w_{2},z_{1}w_{2}-z_{2}w_{1}\big\rangle,
\end{equation*} 
and write $f=z_{1}w_{2}-z_{2}w_{1}$ so that $d\mathcal{J}=\big\langle w_{1}d w_{1},w_{2}d w_{2},d(w_1w_2),d f\big\rangle$. 
Notice that except for the extra generator $f$, $\J$ is the same as in Example~\ref{matlen}. Thus,
\begin{equation*}
\hol_{X}=\frac{\hol_{Z}\{1,w_{1},w_{2}\}}{\hol_{Z}\{f\}},\quad
\Om^{1}_{X,\text{K\"{a}hler}}=
\frac{\Om^{1}_{Z}\{1,w_{1},w_{2}\}+\hol_{Z}\{d w_{1},d w_{2},w_{1}d w_{2}-w_{2}d w_{1}\}}{\Om^{1}_{Z}\{f\}+\hol_{Z}\{d f\}}.
\end{equation*}
Here we write the quotient as a quotient of $\hol_{Z}$-modules.
The $\hol_X$-module structure is obtained by imposing $w_{i}w_{j}=0$
and $w_{i}dw_{i}=0$.

We now describe the torsion elements of $\Om^{1}_{X,\text{K\"{a}hler}}$. If $z_{1}\neq 0$ then $w_{2}=w_{1}z_{2}/z_{1}$ and 
since $w_{1}d w_{1}=0$ we get $w_{2}d w_{1}=\big(z_{2}w_{1}/z_{1}\big)d w_{1}=0$. We have $w_{1}d w_{2}+w_{2}d w_{1}=0$ 
everywhere and therefore we also get $w_{1}d w_{2}=0$ when $z_{1}\neq0$. By symmetry both $w_{1}d w_{2}$ and $w_{2}d w_{1}$ 
vanish when $z_{2}\neq0$. One may verify that neither $w_{1}d w_{2}$ nor $w_{2}d w_{1}$ is in $\Omega^{1}_{Z}\{f\}+\hol_{Z}\{d f\}$ 
and therefore they are torsion elements. 
Moreover, one can check that $\Om^{1}_{X,\text{K\"{a}hler}}/\hol_Z\{w_1dw_2, w_2dw_1\}$ is torsion-free and hence
\begin{equation*}
\Om^{1}_{X}=\frac{\Om^{1}_{Z}\{1,w_{1},w_{2}\}+\hol_{Z}\{d w_{1},d w_{2}\}}{\Om^{1}_{Z}\{f\}+\hol_{Z}\{d f\}}.
\end{equation*}
Similar calculations yield that
\begin{equation*}
\Om^{2}_{X}=\frac{\Om^{2}_{Z}\{1,w_{1},w_{2}\}+\Om^{1}_{Z}\{d w_{1},d w_{2}\}}{\Om^{2}_{Z}\{f\}+\Om^{1}_{Z}\{d f\}}.
\end{equation*}

We now describe generators for $\Ba^{2-p}_X$. In \cite[Example~6.9]{AL},
it was shown that the generators for $i_* \Ba^2_X$ are given by
\begin{equation*}
    \debar\frac{1}{w_1}\wedge \debar\frac{1}{w_2} \wedge dz \wedge dw \text { and }
     \left(z_1 \debar\frac{1}{w_1^2}\wedge \debar\frac{1}{w_2}+z_2\debar\frac{1}{w_1}\wedge \debar\frac{1}{w_2^2}\right) \wedge dz \wedge dw,
\end{equation*}
where $dz = dz_1\wedge dz_2$ and $dw = dw_1 \wedge dw_2$.
These correspond to intrinsic objects in $\Ba^2_X$, which can be considered as differential operators, and in view of the formula
\begin{equation*}
    \frac{1}{(2\pi i)^2} \debar\frac{1}{w_1^{m_1+1}} \wedge \debar\frac{1}{w_2^{m_2+1}} . \psi(w) = \left.\frac{1}{m_1! m_2!} \frac{\partial^{m_1+m_2}}{\partial w_1^{m_1}\partial w_2^{m_2}} \psi \right|_{w=0},
\end{equation*}
they correspond (up to constants) to the form-valued differential operators
\begin{equation*}
    dz_1 \wedge dz_2 \text{ and } dz_1 \wedge dz_2 \left(z_1\frac{\partial}{\partial w_1}+z_2 \frac{\partial}{\partial w_2}\right)
\end{equation*}
followed by restriction to $Z$.

By a similar calculation as in \cite[Example~6.9]{AL}, one can obtain also generators for $i_* \Ba^{2-p}_X$ for $p=1,2$.
Indeed, if $(E,f)$ is a locally free Hermitian resolution of $\Om^p_X$, then $\Ba^{2-p}_X$ is generated by all currents of the
form $\xi R^E_2$, where $\xi$ is in $\Kernel f_3^*$ and $R^E_2$ is the part in $\text{Hom}(E_0,E_2)$ of the residue current associated to $(E,f)$.
To calculate $\xi R^E_2$, by the same argument as in \cite[Example~6.9]{AL}, if $(F,g)$ is the direct sum of $r_0$  
copies of the Koszul complex of $(w_1^2,w_2^2)$, where $r_0 = \text{rank}\, E_0 = \text{rank}\, \Om^p_{\C^4}$, 
and $a\colon (F,g) \to (E,f)$ is a morphism of complexes such that
$a_0\colon F_0 \to E_0$ is the identity, then
$$\xi R^E_2 h = \xi a_2(he) \wedge \debar \frac{1}{w_2^2} \wedge \debar \frac{1}{w_1^2}.$$ 
With the help of the software Macaulay2, we could calculate the morphism $a_2$ and generators for $\Kernel f_3^*$, 
and could thus calculate generators for $i_* \Ba^{2-p}_X$.
The sheaf $i_* \Ba^1_X$ is generated by
\begin{eqnarray*}
& & dz_1 \wedge dw_1 \wedge dw_2 \wedge \debar\frac{1}{w_2} \wedge \debar\frac{1}{w_1}, \quad 
dz_2 \wedge dw_1 \wedge dw_2 \wedge \debar\frac{1}{w_2} \wedge \debar\frac{1}{w_1}, \\
& &
\left((z_2 w_1+z_1 w_2) dz_2 \wedge dw_1\wedge dw_2 + w_1 w_2 dz_1 \wedge dz_2 \wedge dw_2 \right)\wedge \debar \frac{1}{w_2^2} 
\wedge \debar \frac{1}{w_1^2}, \\ 
& &
\left((z_2 w_1+z_1 w_2) dz_1 \wedge dw_1\wedge dw_2 + w_1 w_2 dz_1 \wedge dz_2 \wedge dw_1 \right)\wedge \debar \frac{1}{w_2^2} 
\wedge \debar \frac{1}{w_1^2}, \\
& &
\left(z_2 dz_1 \wedge dz_2 \wedge dw_1 - z_1 dz_1 \wedge dz_2 \wedge dw_2\right) \wedge \debar\frac{1}{w_2} \wedge \debar\frac{1}{w_1}.
\end{eqnarray*}
These correspond (up to constants) to the differential operators
\begin{eqnarray*}
& &
dz_1, \quad dz_2, \\ 
& &
dz_2\left( z_2 \frac{\partial}{\partial w_2} + z_1 \frac{\partial}{\partial w_1}\right) +
    dz_1 \wedge dz_2 \wedge (dw_1 \lrcorner),  \\
& &
dz_1\left( z_2 \frac{\partial}{\partial w_2} + z_1 \frac{\partial}{\partial w_1}\right) +
    dz_1 \wedge dz_2 \wedge (dw_2 \lrcorner), \\
& &
z_2 dz_1 \wedge dz_2 \wedge (dw_2 \lrcorner) - z_1 dz_1 \wedge dz_2 \wedge (dw_1 \lrcorner)
\end{eqnarray*}
followed by restriction to $Z$.
Finally, the sheaf $i_* \Ba^0_X$ is generated by 
\begin{eqnarray*}
& &    
dw_1 \wedge dw_2 \wedge \debar\frac{1}{w_2} \wedge \debar\frac{1}{w_1}, \\ 
& &
\left((z_2 w_1 + z_1 w_2) dw_1 \wedge dw_2 + w_1 w_2 dz_2 \wedge dw_1 - w_1 w_2 dz_1 \wedge dw_2\right) \wedge 
\debar\frac{1}{w_2^2} \wedge \debar\frac{1}{w_1^2}, \\
& &
\left( z_1 dz_1 \wedge dw_2 - z_2 dz_1 \wedge dw_1\right) \wedge \debar\frac{1}{w_2} \wedge \debar\frac{1}{w_1}, \\ 
& &
\left( z_1 dz_2 \wedge dw_2 - z_2 dz_2 \wedge dw_1\right) \wedge \debar\frac{1}{w_2} \wedge \debar\frac{1}{w_1}.
\end{eqnarray*}
These correspond (up to constants) to the differential operators
\begin{eqnarray*}
& &
1, \quad z_2 \frac{\partial}{\partial w_2} + z_1 \frac{\partial}{\partial w_1} + dz_2 \wedge (dw_2 \lrcorner) + dz_1 \wedge (dw_1 \lrcorner), \\
& &
dz_1 \wedge (z_1 dw_1\lrcorner + z_2 dw_2\lrcorner), 
\quad dz_2 \wedge (z_1 dw_1\lrcorner + z_2 dw_2\lrcorner)
\end{eqnarray*}
followed by restriction to $Z$.
\end{example}

\begin{remark}
Finally let us put the calculations of $\Ba_X^\bullet$ into the context of Noetherian differential operators. 
Let as before $i\colon X\to D\subset\C^N$ be defined by $\J$ and $Z=Z(\J)$.
Recall that a holomorphic differential operator $\mathcal{L}\colon \hol_D\to\hol_Z$ is 
Noetherian for $\J$ if $\mathcal{L}\varphi=0$ for any $\varphi\in\J$.
A set $\{\mathcal{L}_j\}_j$ of such operators is complete if $\varphi\in\J$ if and only if 
$\mathcal{L}_j\varphi=0$ for all $j$. 

Assume that $Z$ is smooth and that $(z,w)$ are coordinates such that $Z=\{w=0\}$ and let $\pi\colon D\to Z$ be the projection
$\pi(z,w)=z$. Given a set of generators $\mu=(\mu_1,\ldots,\mu_m)$ of $\Ba_X^{n-p}$ we construct a complete set of Noetherian type operators
$\Om_D^p\to\Om_Z^n$ (acting as Lie derivatives)
for $\J^p$ in a way similar to the construction of the mapping $\widetilde T$ in Section~\ref{smooth-reg}. Take $M>0$ large enough so that
$w^\alpha\mu_j=0$ for all $j$ if $|\alpha|\geq M$. We set
\begin{equation*}
\mathcal{L}_{j,\alpha}\colon \Om_D^p\to \Om_Z^n,\quad \mathcal{L}_{j,\alpha}\varphi=\pi_* (w^\alpha\varphi\wedge i_*\mu_j);
\end{equation*}  
that $\mathcal{L}_{j,\alpha}\varphi\in \Om_Z^n$ follows as in Section~\ref{smooth-reg}. Moreover, $\varphi\wedge i_*\mu_j=0$ if and only if
$\pi_* (w^\alpha\varphi\wedge i_*\mu_j)=0$ for all $\alpha$. Since $\Kernel_pi^*$ is the annihilator of $\mu$ and 
$\Kernel_p i^*\cap \Om_X^p = \J^p$, see the proof of Corollary~\ref{gurka},
it follows that $\mathcal{L}_{j,\alpha}$, $j=1,\ldots,m$, $|\alpha|<M$, is a complete set of Noetherian operators for $\J^p$.
\end{remark}

\end{document}